\newcommand{\nc}{\newcommand}
\newcommand{\delete}[1]{}
\nc{\mlabel}[1]{\label{#1}}  % Use this to suppress names
\nc{\mcite}[2][]{\ifthenelse{\equal{#1}{}}{\cite{#2}}{\cite[#1]{#2}}}  % Use this to suppress names
\nc{\mref}[1]{\ref{#1}}  % Use this to suppress names
\nc{\mbibitem}[1]{\bibitem{#1}} % Use this to show number name
\nc{\mlabel}[1]{\label{#1}  % Use the next two lines to show names
{\hfill \hspace{1cm}{\small\tt{{\ }\hfill(#1)}}}}
\nc{\mcite}[2][]{\ifthenelse{\equal{#1}{}}{\cite{#2}{\small{%
        \tt{{\ }(#2)}}}}{\cite[#1]{#2}{\small{\tt{{\ }(#2)}}}}}  % Use this
\nc{\mref}[1]{\ref{#1}{{\tt{{\ }(#1)}}}}  % Use this lines to show names
\nc{\mbibitem}[1]{\bibitem[\bf #1]{#1}} % Use this to show name
\newtheorem{theorem}{Theorem}[section]
\newtheorem{prop}[theorem]{Proposition}
\newtheorem{lemma}[theorem]{Lemma}
\newtheorem{coro}[theorem]{Corollary}
\theoremstyle{definition}
\newtheorem{defn}[theorem]{Definition}
\newtheorem{prop-def}{Proposition-Definition}[section]
\newtheorem{remark}[theorem]{Remark}
\newtheorem{fact}[theorem]{Fact}
\newtheorem{tempex}[theorem]{Example}
\newtheorem{tempexs}[theorem]{Examples}
\newtheorem{temprmk}[theorem]{Remark}
\newtheorem{tempexer}{Exercise}[section]
\newenvironment{exam}{\begin{tempex}\rm}{\end{tempex}}
\nc{\vsa}{\vspace{-.1cm}} \nc{\vsb}{\vspace{-.2cm}}
\nc{\vsc}{\vspace{-.3cm}} \nc{\vsd}{\vspace{-.4cm}}
\nc{\vse}{\vspace{-.5cm}}
\nc{\Irr}{\mathrm{Irr}}
\nc{\ncrbw}{\calr}  %for NC RB words
\nc{\NS}{U_{NS}}
\nc{\FN}{F_{\mathrm Nij}}
\nc{\dfgen}{V} \nc{\dfrel}{R}
\nc{\dfgenb}{\vec{v}} \nc{\dfrelb}{\vec{r}}
\nc{\dfgene}{v} \nc{\dfrele}{r}
\nc{\dfop}{\odot}
\nc{\dfoa}{\dfop^{(1)}} \nc{\dfob}{\dfop^{(2)}}
\nc{\dfoc}{\dfop^{(3)}} \nc{\dfod}{\dfop^{(4)}}
\nc{\mapm}[1]{\lfloor\!|{#1}|\!\rfloor}
\nc{\cmapm}[1]{\frakC(#1)}
\nc{\red}{\mathrm{Red}}
\nc{\cm}{C}
\nc{\supp}{\mathrm{supp}}
\nc{\lex}{\mathrm{lex}}
\nc{\disp}[1]{\displaystyle{#1}}
\nc{\bin}[2]{ (_{\stackrel{\scs{#1}}{\scs{#2}}})}  %binomial coeff
\nc{\binc}[2]{ \left (\!\! \begin{array}{c} \scs{#1}\\
    \scs{#2} \end{array}\!\! \right )}  %binomial coeff
\nc{\bincc}[2]{  \left ( {\scs{#1} \atop
    \vspace{-.5cm}\scs{#2}} \right )}  %binomial coeff
\nc{\sarray}[2]{\begin{array}{c}#1 \vspace{.1cm}\\ \hline
    \vspace{-.35cm} \\ #2 \end{array}}
\nc{\bs}{\bar{S}} \nc{\ep}{\epsilon}
\nc{\dbigcup}{\stackrel{\bullet}{\bigcup}}
\nc{\la}{\longrightarrow} \nc{\cprod}{\ast} \nc{\rar}{\rightarrow}
\nc{\dar}{\downarrow} \nc{\labeq}[1]{\stackrel{#1}{=}}
\nc{\dap}[1]{\downarrow \rlap{$\scriptstyle{#1}$}}
\nc{\uap}[1]{\uparrow \rlap{$\scriptstyle{#1}$}}
\nc{\defeq}{\stackrel{\rm def}{=}} \nc{\dis}[1]{\displaystyle{#1}}
\nc{\sdotcup}{\tiny{ \displaystyle{\bigcup^\bullet}\ }}
\nc{\fe}{\'{e}}
\nc{\hcm}{\ \hat{,}\ } \nc{\hcirc}{\hat{\circ}}
\nc{\hts}{\hat{\shpr}} \nc{\lts}{\stackrel{\leftarrow}{\shpr}}
\nc{\denshpr}{\den{\shpr}}
\nc{\rts}{\stackrel{\rightarrow}{\shpr}} \nc{\lleft}{[}
\nc{\lright}{]} \nc{\uni}[1]{\tilde{#1}} \nc{\free}[1]{\bar{#1}}
\nc{\freea}[1]{\tilde{#1}} \nc{\freev}[1]{\hat{#1}}
\nc{\dt}[1]{\hat{#1}}
\nc{\wor}[1]{\check{#1}}
\nc{\intg}[1]{F_C(#1)}
\nc{\den}[1]{\check{#1}} \nc{\lrpa}{\wr} \nc{\mprod}{\pm}
\nc{\dprod}{\ast_P} \nc{\curlyl}{\left \{ \begin{array}{c} {} \\
{} \end{array}
    \right .  \!\!\!\!\!\!\!}
\nc{\curlyr}{ \!\!\!\!\!\!\!
    \left . \begin{array}{c} {} \\ {} \end{array}
    \right \} }
\nc{\longmid}{\left | \begin{array}{c} {} \\ {} \end{array}
    \right . \!\!\!\!\!\!\!}
\nc{\lin}{\call} \nc{\ot}{\otimes}
\nc{\ora}[1]{\stackrel{#1}{\rar}}
\nc{\ola}[1]{\stackrel{#1}{\la}}%${\Bbb Z}$
\nc{\scs}[1]{\scriptstyle{#1}} \nc{\mrm}[1]{{\rm #1}}
\nc{\margin}[1]{\marginpar{\rm #1}}   %{\rm #1}}
\nc{\dirlim}{\displaystyle{\lim_{\longrightarrow}}\,}
\nc{\invlim}{\displaystyle{\lim_{\longleftarrow}}\,}
\nc{\mvp}{\vspace{0.5cm}}
\nc{\mult}{m}       %multiplication in bialgebra
\nc{\svp}{\vspace{2cm}} \nc{\vp}{\vspace{8cm}}
\nc{\proofbegin}{\noindent{\bf Proof: }}
\nc{\proofend}{$\blacksquare$ \vspace{0.5cm}}
\nc{\sha}{{\mbox{\cyr X}}}  %used to be \cyr
\nc{\ncsha}{{\mbox{\cyr X}^{\mathrm NC}}}
\newfont{\scyr}{wncyr10 scaled 550}
\nc{\ssha}{\mbox{\bf \scyr X}}
\nc{\ncshao}{{\mbox{\cyr X}^{\mathrm NC,\,0}}}
\nc{\shpr}{\diamond}    %Shuffle product
\nc{\shprc}{\shpr_c}
\nc{\shpro}{\diamond^0}    %Shuffle product
\nc{\shpru}{\check{\diamond}} \nc{\spr}{\cdot}
\nc{\catpr}{\diamond_l} \nc{\rcatpr}{\diamond_r}
\nc{\lapr}{\diamond_a} \nc{\lepr}{\diamond_e} \nc{\sprod}{\bullet}
\nc{\un}{u}                 %unit map in bialgebra
\nc{\vep}{\varepsilon} \nc{\labs}{\mid\!} \nc{\rabs}{\!\mid}
\nc{\hsha}{\widehat{\sha}} \nc{\psha}{\sha^{+}} \nc{\tsha}{\tilde{\sha}}
\nc{\lsha}{\stackrel{\leftarrow}{\sha}}
\nc{\rsha}{\stackrel{\rightarrow}{\sha}} \nc{\lc}{\lfloor}
\nc{\rc}{\rfloor} \nc{\sqmon}[1]{\langle #1\rangle}
\nc{\altx}{\Lambda} \nc{\vecT}{\vec{T}} \nc{\piword}{{\mathfrak P}}
\nc{\lbar}[1]{\overline{#1}}
\nc{\dep}{\mathrm{dep}}
\nc{\mmbox}[1]{\mbox{\ #1\ }}
\nc{\ayb}{\mrm{AYB}} \nc{\mayb}{\mrm{mAYB}} \nc{\cyb}{\mrm{cyb}}
\nc{\ann}{\mrm{ann}} \nc{\Aut}{\mrm{Aut}} \nc{\cabqr}{\mrm{CABQR
}} \nc{\can}{\mrm{can}} \nc{\colim}{\mrm{colim}}
\nc{\Cont}{\mrm{Cont}} \nc{\rchar}{\mrm{char}}
\nc{\cok}{\mrm{coker}} \nc{\dtf}{{R-{\rm tf}}} \nc{\dtor}{{R-{\rm
tor}}}
\nc{\Div}{{\mrm Div}} \nc{\End}{\mrm{End}} \nc{\Ext}{\mrm{Ext}}
\nc{\FG}{\mrm{FG}} \nc{\Fil}{\mrm{Fil}} \nc{\Frob}{\mrm{Frob}}
\nc{\Gal}{\mrm{Gal}} \nc{\GL}{\mrm{GL}} \nc{\Hom}{\mrm{Hom}}
\nc{\hsr}{\mrm{H}} \nc{\hpol}{\mrm{HP}} \nc{\id}{\mrm{id}} \nc{\Id}{\mathrm{Id}}
\nc{\ID}{\mathrm{ID}}}
\nc{\im}{\mrm{im}} \nc{\incl}{\mrm{incl}} \nc{\Loday}{\mrm{ABQR}\
} \nc{\length}{\mrm{length}} \nc{\LR}{\mrm{LR}} \nc{\mchar}{\rm
char} \nc{\pmchar}{\partial\mchar} \nc{\map}{\mrm{Map}}
\nc{\MS}{\mrm{MS}} \nc{\OS}{\mrm{OS}} \nc{\NC}{\mrm{NC}}
\nc{\rba}{\rm{Rota-Baxter algebra}\xspace}
\nc{\rbas}{\rm{Rota-Baxter algebras}\xspace}
\nc{\rbw}{\rm{RBW}\xspace}
\nc{\rbws}{\rm{RBWs}\xspace}
\nc{\rbadj}{\rm{RB}\xspace}
\nc{\mpart}{\mrm{part}} \nc{\ql}{{\QQ_\ell}} \nc{\qp}{{\QQ_p}}
\nc{\rank}{\mrm{rank}} \nc{\rcot}{\mrm{cot}} \nc{\rdef}{\mrm{def}}
\nc{\rdiv}{{\rm div}} \nc{\rtf}{{\rm tf}} \nc{\rtor}{{\rm tor}}
\nc{\res}{\mrm{res}} \nc{\SL}{\mrm{SL}} \nc{\Spec}{\mrm{Spec}}
\nc{\tor}{\mrm{tor}} \nc{\Tr}{\mrm{Tr}}
\nc{\mtr}{\mrm{tr}}
\nc{\ab}{\mathbf{Ab}} \nc{\Alg}{\mathbf{Alg}}
\nc{\Bax}{\mathbf{CRB}} \nc{\Algo}{\mathbf{Alg}^0}
\nc{\cRB}{\mathbf{CRB}} \nc{\cRBo}{\mathbf{CRB}^0}
\nc{\RBo}{\mathbf{RB}^0} \nc{\BRB}{\mathbf{RB}}
\nc{\Dend}{\mathbf{DD}} \nc{\bfk}{{\bf k}} \nc{\bfone}{{\bf 1}}
\nc{\base}[1]{{a_{#1}}} \nc{\Cat}{\mathbf{Cat}}
 \nc{\DN}{\mathbf{DN}}
\nc{\NA}{\mathbf{NA}}
\nc{\SDN}{\mathbf{SDN}}
\nc{\Diff}{\mathbf{Diff}}}
\nc{\gap}{\marginpar{\bf
Incomplete}\noindent{\bf Incomplete!!}
    \svp}
\nc{\FMod}{\mathbf{FMod}} \nc{\Int}{\mathbf{Int}}
\nc{\Mon}{\mathbf{Mon}}
\nc{\RB}{\mathbf{RB}}}
\nc{\remarks}{\noindent{\bf Remarks: }}
\nc{\Rep}{\mathbf{Rep}} \nc{\Rings}{\mathbf{Rings}}
\nc{\Sets}{\mathbf{Sets}} \nc{\bfx}{\mathbf{x}}
\nc{\BA}{{\mathbb A}} \nc{\CC}{{\mathbb C}} \nc{\DD}{{\mathbb D}}
\nc{\EE}{{\mathbb E}} \nc{\FF}{{\mathbb F}} \nc{\GG}{{\mathbb G}}
\nc{\HH}{{\mathbb H}} \nc{\LL}{{\mathbb L}} \nc{\NN}{{\mathbb N}}
\nc{\QQ}{{\mathbb Q}} \nc{\RR}{{\mathbb R}} \nc{\TT}{{\mathbb T}}
\nc{\VV}{{\mathbb V}} \nc{\ZZ}{{\mathbb Z}}
\nc{\cala}{{\mathcal A}} \nc{\calb}{{\mathcal B}}
\nc{\calc}{{\mathcal C}}
\nc{\cald}{{\mathcal D}} \nc{\cale}{{\mathcal E}}
\nc{\calf}{{\mathcal F}} \nc{\calg}{{\mathcal G}}
\nc{\calh}{{\mathcal H}} \nc{\cali}{{\mathcal I}}
\nc{\calj}{{\mathcal J}} \nc{\call}{{\mathcal L}}
\nc{\calm}{{\mathcal M}} \nc{\caln}{{\mathcal N}}
\nc{\calo}{{\mathcal O}} \nc{\calp}{{\mathcal P}}
\nc{\calr}{{\mathcal R}} \nc{\cals}{{\mathcal S}} \nc{\calt}{{\mathcal T}}
\nc{\calw}{{\mathcal W}} \nc{\calx}{{\mathcal X}}
\nc{\CA}{\mathcal{A}}
\nc{\frakA}{{\mathfrak A}}
\nc{\fraka}{{\mathfrak a}}
\nc{\frakB}{{\mathfrak B}}
\nc{\frakb}{{\mathfrak b}}
\nc{\frakC}{{\mathfrak C}} \nc{\frakD}{{\mathfrak D}} \nc{\frakE}{{\mathfrak E}}
\nc{\frakd}{{\mathfrak d}}
\nc{\frakF}{{\mathfrak F}}
\nc{\frakg}{{\mathfrak g}}
\nc{\frakm}{{\mathfrak m}}
\nc{\frakM}{{\mathfrak M}}
\nc{\frakMo}{{\mathfrak M}^0}
\nc{\frakO}{{\mathfrak O}}
\nc{\frakP}{{\mathfrak P}}
\nc{\frakp}{{\mathfrak p}}
\nc{\frakS}{{\mathfrak S}}
\nc{\frakSo}{{\mathfrak S}^0}
\nc{\fraks}{{\mathfrak s}}
\nc{\os}{\overline{\fraks}}
\nc{\frakT}{{\mathfrak T}}
\nc{\frakTo}{{\mathfrak T}^0}
\nc{\oT}{\overline{T}}
\nc{\frakX}{{\mathfrak X}}
\nc{\frakXo}{{\mathfrak X}^0}
\nc{\frakx}{{\mathbf x}}
\nc{\frakTx}{\frakT}      %All rooted trees, correspond to \ncsha(X)
\nc{\frakTa}{\frakT^a}        % rooted trees for \ncsha(A)
\nc{\frakTxo}{\frakTx^0}   % rooted trees for \ncshao(X)
\nc{\caltao}{\calt^{a,0}}   % rooted trees for \ncshao(A)
\nc{\ox}{\overline{\frakx}} \nc{\fraky}{{\mathfrak y}}
\nc{\frakz}{{\mathfrak z}} \nc{\oX}{\overline{X}} \font\cyr=wncyr10
\nc{\tred}[1]{\textcolor{red}{#1}} \nc{\tgreen}[1]{\textcolor{green}{#1}}
\nc{\tblue}[1]{\textcolor{blue}{#1}} \nc{\li}[1]{\tred{Li:#1 }}
\nc{\xing}[1]{\tblue{Xing:#1 }} \nc{\markus}[1]{\tgreen{Markus: #1}}
\nc{\dnx}{\Delta_n X} \nc{\dx}{\Delta X} \nc{\dgp}{{\rm deg_{P}}}
\nc{\dgt}{{\rm deg_{T}}} \nc{\dg}{{\rm deg}} \nc{\ida}{ID($A$)} \nc{\tu}{\tilde{u}}
\nc{\fp}{\langle R, Q\rangle } \nc{\drb}{\rm DRB}
\nc{\fy}{f^{\fraka}} \nc{\Mod}{{\bf Mod}} \nc{\uo}{\Omega}
\nc{\op}{\langle \Omega;*\rangle} \nc{\R}{A}
\nc{\tal}{\langle \lc \ \rc_{\Omega};\ast \rangle}
\nc{\opra}{A[\uo]}
\let\epsilon\varepsilon
\let\phi\varphi
\newcommand{\der}{\partial}
\newcommand{\cum}{{\textstyle \varint}}
\newcommand{\vcum}{{\textstyle \varoint}}
\newcommand{\evl}{\text{\scshape\texttt e}}
\newcommand{\galg}{\mathcal{F}}
\newcommand{\ogalg}{\mathcal{G}}
\newcommand{\sig}[1][]{\ifx&#1&\Sigma\else\Sigma^{(#1)}\fi}
\newcommand{\term}[2][]{\ifthenelse{\equal{#1}{}}{\mathcal{T}_{#2}}{\mathcal{T}_{#2}(#1)}}
\newcommand{\fopring}[2]{#1[#2]}
\newcommand{\nonz}[1]{#1^{\times}}
\def\scum{{\setbox0=%
    \hbox{$\textstyle{\scriptscriptstyle\diagup}{\varint}$}%
    \textstyle{\vcenter{\hbox{$\scriptscriptstyle\diagup$}}\kern-.5\wd0}%
    \!\varint}}
\newcommand{\sder}{\eth}
\newcommand{\CAlg}{\mathbf{CAlg}}
\newcommand{\OpAlg}{\mathbf{Alg}(\Omega)}
\newcommand{\COpAlg}{\mathbf{CAlg}(\Omega)}
\newcommand{\Diff}[1][\lambda]{\mathbf{Diff}_{#1}}
\newcommand{\Diffz}{\mathbf{Diff}}
\newcommand{\RB}[1][\lambda]{\mathbf{RB}_{#1}}
\newcommand{\RBz}{\mathbf{RB}}
\newcommand{\DRB}[1][\lambda]{\mathbf{DRB}_{#1}}
\newcommand{\DRBz}{\mathbf{DRB}}
\newcommand{\ID}[1][\lambda]{\mathbf{ID}_{#1}}
\newcommand{\IDz}{\mathbf{ID}}
\newcommand{\ader}{\ell}
\newcommand{\weyl}{\mathrm{A}_1}
\newcommand{\diffweyl}{\weyl(\der)}
\newcommand{\intweyl}{\weyl(\ader)}
\newcommand{\intdiffweyl}{\weyl(\der, \ader)}
\newcommand{\backquote}{^{\,\backprime}}
\newcommand{\grb}{\mathrm{GB}}
\newcommand{\Mid}{\:|\:}
\newcommand{\trl}[2]{[#1]_{#2}}
\newcommand{\sub}[1]{[#1]}
\newcommand{\eopra}{A[\uo|E]}
\begin{document}

\title[Differential Rota-Baxter operators]{On Rings of Differential Rota-Baxter Operators}

\author{Xing Gao}
\address{Department of Mathematics,
    Lanzhou University,
    Lanzhou, Gansu 730000, China}
\email{gaoxing@lzu.edu.cn}
\author{Li Guo}
\address{%
  Department of Mathematics and Computer Science,
  Rutgers University,
  Newark, NJ 07102, USA}
\email{liguo@rutgers.edu}
\author{Markus Rosenkranz}
\address{%
  RISC,
  Johannes Kepler University, A-4040 Linz, Austria}
\email{marcus@rosenkranz.or.at}

\date{\today}

\begin{abstract}
  Using the language of operated algebras, we construct and
  investigate a class of operator rings and enriched modules induced
  by a derivation or Rota-Baxter operator. In applying the general
  framework to univariate polynomials, one is led to the
  integro-differential analogs of the classical Weyl algebra. These
  are analyzed in terms of skew polynomial rings and noncommutative
  Gr{\"o}bner bases.

  \bigskip

  \noindent%
  \textbf{Mathematics Subject Classification:} 16S32; 13N10; 16W99;
  45N05; 47G10; 12H20.

  \medskip

  \noindent%
  \textbf{Keywords:} Differential algebra; Rota-Baxter operators;
  generalized Weyl algebra; skew polynomials; operator rings;
  universal algebra.
\end{abstract}

\maketitle

\tableofcontents

% =============================================================================
\section{Introduction}
\label{sec:intro}
% =============================================================================

The \emph{ring of differential operators}~$\galg[\der]$ over a given
differential ring~$(\galg, \der)$ is a fundamental algebraic
structure\footnote{See the end of this section for conventions on
  notation and terminology.} in the area of differential
algebra~\mcite{Ritt1966,Kolchin1973,CassidyGuoKeigherSit2002},
especially in differential Galois theory~\mcite{PutSinger2003} and
$\mathcal{D}$-module theory~\cite{Coutinho1995}. Building on this
framework and specializing to the case of linear ordinary differential
equations (LODEs), the larger \emph{ring of integro-differential
  operators}~$\galg[\der,\cum] \supset \galg[\der]$ over an
integro-differential ring~$(\galg, \der, \cum)$ was introduced
in~\mcite{Rosenkranz2005,RosenkranzRegensburger2008a} for describing,
computing and factoring the Green's operators of regular boundary
problems for LODEs. As one knows from the classical theory, such
Green's operators will be integrals with the Green's function as its
nucleus\footnote{This is usually called the \emph{kernel}~$k(x,y)$ of
  an integral operator~$f(x) \mapsto \cum k(x,y) \, f(y) \, dy$. In
  algebra, we prefer the less common term \emph{nucleus} for avoiding
  confusion with the kernel of a homomorphism.}. Algebraically
speaking, the Green's operators are contained in the \emph{ring of
  integral operators}~$\galg[\cum] \subset \galg[\der,\cum]$
associated to the Rota-Baxter algebra~$(\galg, \cum)$.

In the present paper we introduce the \emph{ring of differential Rota-Baxter
  operators}~$\galg[\der,\vcum]$ over a given differential Rota-Baxter
algebra~$(\galg, \der, \vcum)$. Although closely related to the integro-differential operator
ring~$\galg[\der,\cum]$, this ring has a more delicate algebraic structure and a distinct range of
applicability. In fact, we shall see that the ring of integro-differential operators is a quotient
of~$\galg[\der,\vcum]$: Loosely speaking, we may view~$(\galg, \der, \vcum)$ as an
integro-differential algebra whose integral is initialized at a ``generic point''; the passage to
the quotient is then interpreted as ``fixing the integration constant''
(Propositions~\ref{prop:lin-op-rings} and~\ref{prop:spec-isom}). For the particular case of
polynomial coefficients $\galg = \bfk[x]$ over a field~$\bfk \supseteq \QQ$, this has been studied
in the context of the (integro-differential) Weyl algebra~\mcite{RegensburgerRosenkranzMiddeke2009},
including the aforementioned \emph{specialization isomorphism} that fixes the integration
constant. For this setting we now provide also a \emph{generalization isomorphism} that goes the
opposite route of embedding the finer structure of differential Rota-Baxter operators into an
integro-differential operator ring containing a generic point (Theorem~\ref{thm:gen-isom-weyl}).

As to be expected from the quotient relation mentioned above, the ring
of differential Rota-Baxter operators~$\galg[\der,\vcum]$ has a
broader range of applicability. In particular, various \emph{classical
  distribution spaces} from analysis can be construed as modules
over~$\galg[\der,\vcum]$ but not over~$\galg[\der,\cum]$,
taking~$\galg = C^\infty(\RR)$ or~$\galg = \RR[x]$ as coefficients
(Example~\ref{ex:vmod}\ref{it:dist}). This is in stark contrast
to~$\galg = C^\infty(\RR)$ or~$\galg = \RR[x]$ itself, which is both
an $\galg[\der,\cum]$-module and an $\galg[\der,\vcum]$-module,
with~$\cum := \vcum := \cum_0^x$ the standard Rota-Baxter operator
on~$\galg$. This is so because distributions can be differentiated
arbitrarily but in general they cannot be evaluated (any point can be
in the singular support of a distribution). In particular, the crucial
identity~$\cum f' = f - f(0)$ for smooth functions~$f \in
C^\infty(\RR)$ fails to hold for distributions~$f \in
\mathcal{D}'(\RR)$. The upshot is that distributions have a sheaf
structure (restrictions to open subsets) but no evaluations
(``restrictions to points'').

Besides the ring of differential Rota-Baxter
operators~$\galg[\der,\vcum]$, which is the main object introduced in
this paper, we have already mentioned the related operator
rings~$\galg[\der]$, $\galg[\cum]$ and~$\galg[\der,\cum]$. It should
be recalled~\mcite[Prop.~17]{RosenkranzRegensburger2008a} that the
latter ring is more than the sum of the two others. In fact
(Proposition~\ref{prop:lin-op-rings}), we have~$\galg[\der,\cum] =
\galg[\der] \dotplus \galg[\cum] \!\setminus\! \galg \dotplus (\evl)$
as $\bfk$-modules, where~$\evl := 1_\galg - \cum \circ \der$ is the
induced evaluation. Having four different operator rings, it will be
expedient to describe a \emph{universal algebraic setting} that allows
to generate these four operator rings---and possibly others---in a
uniform manner (Example~\ref{prop:lin-op-rings}).

In fact, we shall use a slightly more special setting that is better
adapted to our needs: While universal algebra applies to all varieties
(categories whose objects are sets~$A$ endowed with any number of
$n$-operations~$A^n \to A$, subject to laws in equational form), we
shall only need $\bfk$-algebras endowed with one or several unary
operations~$A \to A$, usually known as \emph{operated algebras}. This
leads to significant simplifications: While the algorithmic machinery
of universal algebra is generally dependent on rewriting and the
Knuth-Bendix algorithm, the situation of operated algebras is amenable
to Gr\"obner(-Shirshov)
bases~\mcite{Buchberger1965,Bergman1978,BokutLatyshevShestakovZelmanov2009}. Moreover,
the latter are closely related to and compatible with the skew
polynomial approach used in~\mcite{RegensburgerRosenkranzMiddeke2009}
for constructing the integro-differential Weyl algebra.

It should be emphasized that we allow \emph{arbitrary laws} to be
imposed on operated algebras, not just multilinear laws as one might
be led to expect from the examples. For ground rings of characteristic
zero, we show how to transform arbitrary laws to multilinear ones,
using a suitable polarization process. This may also be the reason why
the usual treatment is based on multilinear laws. For instance
(Example~\ref{ex:lin-var}\ref{it:rba}), Rota-Baxter algebras of weight
zero are normally defined through the axiom~$(\cum f)(\cum g) = \cum f
\cum g + \cum g \cum f$, which may be viewed as the polarized version
of~$(\cum f)^2 = 2 \cum f \cum f$; in characteristic zero these
identities are equivalent. For showing that~$\cum$ is a Rota-Baxter
operator, the latter identity may be better (e.g.\@ using induction on
some degree of~$f$ rather than double induction on~$f$ and~$g$).

Since we have in mind various applications for function algebras, we
restrict ourselves in this paper to operator rings over
\emph{commutative algebras}. However, the construction would work in
essentially the same way for noncommutative algebras, writing the laws
in terms of noncommutative rather than commutative decorated words
(Definition~\ref{def:dec-words}). This could be employed for operator
rings over matrix-valued functions; however, we shall not pursue this
further in the scope of the present paper.

\textbf{Terminology and Notation.} We use~$\NN = \{ 0, 1, 2, \dots \}$
for the \emph{natural numbers with zero}. If~$M$ is a (multiplicative)
monoid~$M$ with zero element~$0 \in M$, the subset of \emph{nonzero
  elements} is denoted by~$\nonz{M} := M \setminus \{0\}$. If~$Z$ is
any set, we write respectively~$M(Z)$ and~$C(Z)$ for the \emph{free
  monoid} and the \emph{free commutative monoid} on~$Z$; its identity
is denoted by~$1$.

Unless specified otherwise, all rings and algebras are assumed to be \emph{associative and unital}
(whereas nonunital rings will be called \emph{rungs}). All modules and algebras are over a
\emph{fixed commutative ring}~$\bfk$, which will be specialized to a field of characteristic zero in
Section~\ref{sec:intdiff-weyl}. All modules are taken to be \emph{left modules}. A (commutative or
noncommutative) ring without zero divisor is called a \emph{domain}. We write~$\Alg_R$ and~$\Mod_R$
for the category of \emph{$R$-modules} and \emph{$R$-algebras}, respectively (suppressing the
subscript~$R$ in the case~$R=\bfk$). We denote the \emph{$\bfk$-span} of a set~$Z$ by~$\bfk Z$; the
ring of \emph{noncommutative polynomials} is thus given by~$\bfk \langle X \rangle := \bfk \, M(X)$
and the ring of \emph{commutative polynomials} by~$\bfk [X] := \bfk \, C(X)$.

Let~$A$ be a~$\bfk$-module with $\bfk$-submodule~$A'$. Then~$A \setminus A'$ denotes a linear
complement of~$A$, rather than the set-theoretic one. (We will only use this notation when such
linear complements exist and the specific choice is irrelevant.)

For~$\der$ and~$\cum$ we employ \emph{operator notation} as in
analysis; for example we write~$(\der f)(\der g)$ rather than~$\der(f)
\der(g)$.  Juxtaposition has precedence over the operators so that we
have for instance~$\der \, f\!g := \der(fg)$ and~$\cum f \cum g :=
\cum (f \cum g)$. Moreover, we use also the customary notation~$f'$
for the \emph{derivative}~$\der f$, and by analogy~$f\backquote$ for
the \emph{antiderivative}~$\cum f$.

\textbf{Structure of the Paper.} In Section~\ref{sec:varieties} we
start by introducing the appropriate tools for describing varieties
and their laws in the framework of $\uo$-operated algebras. The main
result in this section is the reduction of arbitrary laws to
homogeneous and multilinear laws (Corollary~\ref{coro:polar}). We end
the section by introducing the four basic varieties coming from
analysis (Example~\ref{ex:lin-var}). In
Section~\ref{sec:oprings-modules}, the operator ring for a given
variety is introduced (Definition~\ref{def:eopring}). Modules over the
operator rings are described equivalently as a special class of
$\uo$-operated modules (Proposition~\ref{prop:op-module}). The
operator rings and modules are exemplified in the four basic varieties
(Proposition~\ref{prop:lin-op-rings} and
Example~\ref{ex:vmod}). Section~\ref{sec:drbo} is devoted to one of
the four operator rings that is introduced here for the first time:
the ring of differential Rota-Baxter operators. Here the main results
are a left adjoint to the forgetful functor from integro-differential
to differential Rota-Baxter algebras (Theorem~\ref{thm:free-intdiff})
and the embedding of the differential Rota-Baxter operator ring into a
suitable integro-differential operator ring
(Theorem~\ref{thm:gen-isom}). Finally, we turn to the important
special case of polynomial coefficients in
Section~\ref{sec:intdiff-weyl}, thus considering integro-differential
and differential Rota-Baxter analogs for the Weyl algebra. The most
important result is that the so-called integro-differential algebra
introduced in~\cite{RegensburgerRosenkranzMiddeke2009} is in fact the
ring of differential Rota-Baxter operators with polynomial
coefficients (Corollary~\ref{cor:intdiff-isom}), which also implies
the embedding result by specializing the generic one
(Theorem~\ref{thm:gen-isom-weyl}).

% =============================================================================
%\section{Linear Operator Rings}
%\label{sec:lin-op-rings}
% =============================================================================

\section{Varieties of Operated Algebras}
\label{sec:varieties}

Recall that an \emph{$\uo$-operated algebra} $(A; P_\omega \Mid \omega
\in \uo)$ is an algebra $A$ together with certain $\bfk$-linear
operators~$P_\omega\colon A \to A$. Here no restrictions are imposed
on the operators~$P_\omega$. The category of $\uo$-operated algebras
is denoted by~$\OpAlg$, the full subcategory of commutative
$\uo$-operated algebras by~$\COpAlg$. For~$S \subseteq A$, we use the
notation~$(S)$ for the \emph{operated ideal} generated by~$S$.

We describe now the free object~$\frakC_\uo(X)$ of~$\COpAlg$ over a
countable set of generators~$X$. The construction proceeds via
stages~$\frakC_{\uo,n}(X)$ that are defined recursively as follows. We
start with~$\frakC_{\Omega,0}(X):= C(X)$. Then for each~$\omega \in
\Omega$ we create~$\lc C(X) \rc_\omega:=\{\lc u\rc_\omega\,|\, u\in
C(X)\}$ as a disjoint copy of~$C(X)$ and define
$$\frakC_{\Omega,1}(X):= C \big(X\uplus \textstyle\biguplus\limits_{\omega\in \Omega} \lc
C(X)\rc_\omega \big),$$ where~$\uplus$ means disjoint union. Note that
elements in~$\lc C(X)\rc_\omega$ are merely symbols indexed by~$C(X)$;
for example, $\lc 1\rc_\omega$ is not the identity. The
inclusion~$X\hookrightarrow \smash{X\uplus
  \textstyle\biguplus\limits_{\omega\in \Omega}}
\lc\frakC_{\Omega,0}(X) \rc_\omega$ induces a monomorphism
$$i_{0,1}\colon  \frakC_{\Omega,0}(X) = C(X)\hookrightarrow
\frakC_{\Omega,1}(X)=C\big( X\uplus
  \textstyle\biguplus\limits_{\omega\in \Omega} \lc\frakC_{\Omega,0}(X)
\rc_\omega \big)$$
of free commutative monoids through which we
identify~$\frakC_{\Omega,0}(X)$ with its image
in~$\frakC_{\Omega,1}(X)$. For~$n\geq 2$, inductively assume that
$\frakC_{\Omega,n-1}(X)$ has been defined and the embedding
$$i_{n-2,n-1}\colon  \frakC_{\Omega,n-2}(X) \hookrightarrow \frakC_{\Omega,n-1}(X)$$
has been obtained. Then we define
\begin{equation*}
 \label{eq:frakn}
 \frakC_{\Omega,n}(X) := C \big( X\uplus
 \textstyle\biguplus\limits_{\omega \in \Omega} \lc\frakC_{n-1}(X)
 \rc_\omega \big).
\end{equation*}
Since~$\frakC_{\Omega,n-1}(X) = C \big(X\uplus
\smash{\textstyle\biguplus\limits_{\omega \in \Omega}}
\lc\frakC_{\Omega,n-2}(X) \rc_\omega \big)$ is a free commutative
monoid, once again the injections
$$\lc\frakC_{\Omega,n-2}(X) \rc_\omega \hookrightarrow
\lc \frakC_{\Omega,n-1}(X) \rc_\omega$$ induce a monoid embedding
\begin{equation*}
    \frakC_{\Omega,n-1}(X) = C \big( X\uplus
 \textstyle\biguplus\limits_{\omega \in \Omega} \lc\frakC_{n-2}(X)
 \rc_\omega \big)
 \hookrightarrow
    \frakC_{\Omega,n}(X) = C \big( X\uplus
 \textstyle\biguplus\limits_{\omega \in \Omega} \lc\frakC_{n-1}(X)
 \rc_\omega \big).
\end{equation*}

Finally we define the monoid
$$ \frakC_\Omega(X):=\bigcup_{n\geq 0}\frakC_{\Omega,n}(X)$$
whose elements are called (commutative) \emph{$\Omega$-decorated
  bracket words} in~$X$.

\begin{defn}
\mlabel{def:dec-words}
Let $X$ be a set, $\star$ a symbol not in $X$ and $X^\star := X\cup \{\star\}$.
\begin{enumerate}
\item By an \emph{$\Omega$-decorated~$\star$-bracket word on $X$} we
  mean any expression in $\frakC_\Omega(X^\star)$ with exactly one
  occurrence of $\star$. The set of all
  $\Omega$-decorated~$\star$-bracket words on $X$ is denoted by
  $\frakC_\Omega^\star(X)$.
\item For $q\in \frakC_\Omega^\star(X)$ and $u\in \frakC_\Omega(X)$,
  we define $q\sub{u} := q\sub{\star \mapsto u}$ to be the
  $\Omega$-decorated bracket word in $\cmapm{X}$ obtained by replacing
  the letter $\star$ in $q$ by $u$.
\item For $s=\sum_i c_i u_i \in \bfk \frakC_\Omega(X)$, where $c_i\in
  \bfk$, $u_i\in \frakC_\Omega(X)$ and $q\in \frakC_\Omega^\star(X)$,
  we define $q\sub{s} := \sum_i c_i q\sub{u_i},$ which is in
  $\bfk\frakC_\Omega(X)$.
\end{enumerate}
More generally, with $\star_1,\dots\star_n$ distinct symbols not in
$X$, set $X^{\star n} := X\cup \{\star_1, \dots, \star_n \}$.
\begin{enumerate}
\setcounter{enumi}{3}
\item We define an \emph{$\Omega$-decorated~$(\star_1, \dots,
    \star_n)$-bracket word on $X$} to be an expression in
  $\frakC_\Omega (X^{\star n})$ with exactly one occurrence of each of
  $\star_j$, $1\leq j\leq n$. The set of all
  $\Omega$-decorated~$(\star_1, \dots, \star_n)$-bracket words on~$X$
  is denoted by $\frakC_\Omega^{\star n}(X)$.
\item For $q\in \frakC_\Omega^{\star n}(X)$ and $u_1,\dots, u_n\in
  \bfk \frakC_\Omega(X)$, we define
$$q\sub{u_1,\dots, u_n} := q\sub{\star_1 \mapsto u_1, \dots, \star_n \mapsto u_n}$$
to be obtained by replacing the letter $\star_j$ in $q$ by $u_j$ for
$1\leq j\leq n$.
\end{enumerate}
The notation~$q\sub{\theta}$ used above for~$\theta = \{ \star \mapsto
u\}$ and~$\theta = \{ \star_1 \mapsto u_1, \dots, \star_n \mapsto
u_n\}$ can be extended to any substitution~$\theta\colon X^{\star n}
\overset{\sim}{\to} X^{\star n}$; see below
after~Proposition~\ref{pp:freetmn}.
\end{defn}

Now we describe the free object in the category~$\COpAlg$. For
each~$\omega \in \Omega$ we introduce an operator~$\lc\
\rc_\omega\colon \frakC_\Omega(X)\to \frakC_\Omega(X)$ acting
as~$u\mapsto \lc u\rc_\omega$. Then~$(\frakC_\Omega(X); \lc \
\rc_\omega \Mid \omega \in \Omega)$ is a commutative operated monoid;
its linear span~$(\bfk\frakC_\Omega(X); \lc\ \rc_{\omega \in \Omega}
\Mid \omega \in \Omega)$ is a commutative operated algebra. It is
moreover free in the sense of the following
proposition~\mcite{GaoGuoZheng2014,Guo2009}. In the language of
universal algebra, $\bfk\frakC_\Omega(X)$ appears as the term algebra
in the variety of $\uo$-operated algebras~\mcite{BaaderNipkow1998}.

\begin{prop}
  \label{pp:freetm}
  The triple $(\bfk\frakC_\Omega(X); \lc\ \rc_\omega \mid \omega \in
  \uo; j_X)$, with $j_X\colon X \hookrightarrow \frakC_\Omega(X)$ the
  natural embedding, is the free commutative $\uo$-operated algebra
  on~$X$. In other words, for any commutative $\uo$-operated
  algebra~$A$ and any set map~$f \colon X\to A$, there is a unique
  extension of~$f$ to a homomorphism~$\free{f}\colon
  \bfk\frakC_\Omega(X)\to A$ of $\Omega$-operated algebras.
\end{prop}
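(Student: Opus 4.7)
The plan is to construct $\free{f}$ inductively, mirroring the staged construction of $\frakC_\Omega(X)$, and then invoke the universal property of free commutative monoids at each stage together with linearity to obtain the required algebra homomorphism. Uniqueness is then forced by structural induction on bracket words.

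First I would build a monoid homomorphism $\free{f}_{\mathrm{mon}}\colon \frakC_\Omega(X)\to (A,\cdot)$ that is compatible with the operators. On $\frakC_{\Omega,0}(X)=C(X)$, the universal property of the free commutative monoid extends $f\colon X\to A$ (into the multiplicative monoid of $A$) uniquely to a monoid homomorphism. Inductively, assume $\free{f}_{\mathrm{mon}}$ has been defined on $\frakC_{\Omega,n-1}(X)$. On the generating set $X\uplus\biguplus_{\omega\in\Omega}\lc\frakC_{\Omega,n-1}(X)\rc_\omega$ of $\frakC_{\Omega,n}(X)$, define it by $x\mapsto f(x)$ for $x\in X$ and $\lc u\rc_\omega\mapsto P_\omega(\free{f}_{\mathrm{mon}}(u))$; again the universal property of the free commutative monoid extends this uniquely to $\frakC_{\Omega,n}(X)$. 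Compatibility with the embedding $i_{n-1,n}$ is immediate from the inductive definition, so these maps glue to a well-defined monoid homomorphism on $\frakC_\Omega(X)=\bigcup_n\frakC_{\Omega,n}(X)$ satisfying $\free{f}_{\mathrm{mon}}(\lc u\rc_\omega)=P_\omega(\free{f}_{\mathrm{mon}}(u))$ for all $u\in\frakC_\Omega(X)$ and $\omega\in\Omega$.

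Next I would extend $\free{f}_{\mathrm{mon}}$ $\bfk$-linearly to $\free{f}\colon \bfk\frakC_\Omega(X)\to A$. Since $\frakC_\Omega(X)$ is a $\bfk$-basis of $\bfk\frakC_\Omega(X)$, this extension is unambiguous. That $\free{f}$ is an algebra homomorphism follows because $\free{f}_{\mathrm{mon}}$ is multiplicative on basis elements and the multiplication in $\bfk\frakC_\Omega(X)$ is the bilinear extension of the monoid product; that $\free{f}$ commutes with $\lc\ \rc_\omega$ and $P_\omega$ likewise follows by linearity from the monoid-level compatibility established above.

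For uniqueness, suppose $g\colon \bfk\frakC_\Omega(X)\to A$ is any $\uo$-operated algebra homomorphism with $g\circ j_X=f$. By linearity it suffices to show $g(u)=\free{f}(u)$ for every $u\in\frakC_\Omega(X)$, which I would prove by induction on the smallest $n$ with $u\in\frakC_{\Omega,n}(X)$. For $n=0$, $u$ is a product of elements of $X$, and the claim follows since both $g$ and $\free{f}$ are algebra homomorphisms agreeing on $X$. In the inductive step, writing $u$ as a product of generators from $X\uplus\biguplus_\omega\lc\frakC_{\Omega,n-1}(X)\rc_\omega$, each factor of the form $\lc v\rc_\omega$ with $v\in\frakC_{\Omega,n-1}(X)$ is sent by $g$ to $P_\omega(g(v))=P_\omega(\free{f}(v))=\free{f}(\lc v\rc_\omega)$ using the inductive hypothesis and the fact that $g$ intertwines $\lc\ \rc_\omega$ with $P_\omega$.

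The main obstacle is bookkeeping rather than mathematical depth: one must carefully track the identification of $\frakC_{\Omega,n-1}(X)$ as a submonoid of $\frakC_{\Omega,n}(X)$ via $i_{n-1,n}$ so that the stage-wise definitions are genuinely consistent, and one must be sure that applying the universal property of free commutative monoids at each stage respects the symbols $\lc u\rc_\omega$ as formal generators (not as images of a bracket operator already defined). Once this is done cleanly, both existence and uniqueness of $\free{f}$ are routine consequences of the iterated free commutative monoid structure.
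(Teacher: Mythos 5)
Your argument is correct and is essentially the standard construction that the paper delegates to its references (\cite{GaoGuoZheng2014,Guo2009}): define the map stage-by-stage on the generators $X\uplus\biguplus_{\omega}\lc\frakC_{\Omega,n-1}(X)\rc_\omega$ via the universal property of the free commutative monoid, check compatibility with the embeddings $i_{n-1,n}$, extend $\bfk$-linearly, and get uniqueness by structural induction on the depth of bracket words. No gaps; the bookkeeping caveats you flag (consistency across stages, treating $\lc u\rc_\omega$ as formal generators) are exactly the right points to watch and are handled correctly.
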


In the remainder of this section, we assume that~$\bfk$ is a
$\QQ$-algebra. We first define polarization for the non-commutative
case and then induce polarization for the commutative case via a
natural homomorphism. The term \emph{polarization} is adopted from
Rota's early study~\mcite[p.~928]{BillikRota1960} of this normalization process
(second line in the proof of Prop.~2.1).

The construction of noncommutative $\Omega$-decorated bracket
words~$\frakM_\Omega(X)$ is parallel to the commutative
case~$\frakC_\Omega(X)$, using everywhere~$M(X)$ in place of~$C(X)$;
the reader is referred to~\mcite{Guo2009} for details. Clearly,
$\bfk\frakC_\Omega(X)$ is the quotient of $\bfk\frakM_\Omega(X)$
modulo the commutators.

\begin{prop}
  \label{pp:freetmn}
  The triple $(\bfk\frakM_\Omega(X); \lc\ \rc_\omega \mid \omega \in
  \uo; j_X)$, with $j_X\colon X \hookrightarrow \frakM_\Omega(X)$
  again the natural embedding, is the free $\uo$-operated algebra
  on~$X$. This means for any $\uo$-operated algebra~$A$ and for any
  set map~$f \colon X\to A$, there exists a unique extension of~$f$ to
  a homo\-morphism~$\free{f}\colon \bfk\frakM_\Omega(X)\to A$ of
  $\Omega$-operated algebras.
\end{prop}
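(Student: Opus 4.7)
The plan is to construct the extension $\free{f}$ stagewise along the inductive definition of $\frakM_\Omega(X) = \bigcup_{n \geq 0} \frakM_{\Omega,n}(X)$, exactly mirroring the construction of $\frakM_\Omega(X)$ itself. At stage $0$ we have $\frakM_{\Omega,0}(X) = M(X)$, so the given set map $f\colon X \to A$ lifts uniquely to a monoid homomorphism $\free{f}_0\colon M(X) \to A$ by the universal property of the free monoid, and then $\bfk$-linearly to an algebra homomorphism $\bfk M(X) \to A$.

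For the inductive step, assuming $\free{f}_{n-1}\colon \bfk\frakM_{\Omega,n-1}(X) \to A$ has been defined as a $\bfk$-algebra homomorphism that is compatible with the embedding $\bfk\frakM_{\Omega,n-2}(X) \hookrightarrow \bfk\frakM_{\Omega,n-1}(X)$, I extend $f$ to the set $X \uplus \biguplus_{\omega \in \Omega} \lc\frakM_{\Omega,n-1}(X)\rc_\omega$ by sending each bracketed symbol $\lc u \rc_\omega$ to $P_\omega(\free{f}_{n-1}(u)) \in A$. Since $\frakM_{\Omega,n}(X) = M\bigl(X \uplus \biguplus_{\omega} \lc\frakM_{\Omega,n-1}(X)\rc_\omega\bigr)$ is the free monoid on this set, the universal property produces a unique monoid homomorphism, which I extend $\bfk$-linearly to $\free{f}_n\colon \bfk\frakM_{\Omega,n}(X) \to A$. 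The compatibility $\free{f}_n \circ i_{n-1,n} = \free{f}_{n-1}$ is immediate from the definitions, so the family assembles into a unique $\bfk$-algebra homomorphism $\free{f}\colon \bfk\frakM_\Omega(X) \to A$.

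It then remains to verify that $\free{f}$ respects the operators $\lc \cdot \rc_\omega$, i.e.\ that $\free{f}(\lc u \rc_\omega) = P_\omega(\free{f}(u))$ for every $u \in \frakM_\Omega(X)$ and every $\omega \in \Omega$. But if $u \in \frakM_{\Omega,n-1}(X)$, then $\lc u \rc_\omega$ lies in the generating set of $\frakM_{\Omega,n}(X)$, and the stagewise construction puts precisely this value at $\lc u \rc_\omega$. Uniqueness of $\free{f}$ follows by the same induction: any $\Omega$-operated algebra homomorphism extending $f$ must agree with $f$ on $X$, must be multiplicative, $\bfk$-linear, and must intertwine $\lc \cdot \rc_\omega$ with $P_\omega$, which pins down its value on every stage $\frakM_{\Omega,n}(X)$.

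The proof is structurally identical to the commutative case of Proposition~\ref{pp:freetm}, only with $M(X)$ replacing $C(X)$ throughout; indeed the entire inductive scheme carries over verbatim once one appeals to the universal property of the free (noncommutative) monoid rather than the free commutative monoid at each stage. The only real content is the interlocking of the two universal properties---free monoid and free operated algebra---which is handled cleanly by the stagewise framework, so no substantial obstacle arises. For this reason the author's text is likely to simply refer to the parallel construction in \cite{Guo2009}.
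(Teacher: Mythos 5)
Your stagewise construction is correct and is exactly the standard argument; the paper itself gives no proof of this proposition but simply refers to the parallel construction in the cited literature (Guo 2009), which proceeds by the same interlocking of the free-monoid universal property at each stage $\frakM_{\Omega,n}(X)$ with the operator compatibility and uniqueness checked by induction on $n$. Your anticipation that the authors would defer to that reference is accurate, and your write-up supplies the details they omit.
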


Operated algebras usually satisfy additional relations, for example
the aforementioned Rota-Baxter axiom~$(\cum f)(\cum g) = \cum f \cum g
+ \cum g \cum f$ in the case of Rota-Baxter algebras. We model such
relations by decorated bracket words~$E \subseteq \bfk
\frakC_\Omega(Y)$ or $E \subseteq \bfk\frakM_\Omega(Y)$, depending on
whether we intend the commutative or noncommutative case. Note that
here we use a new set of variables~$Y$ that should be distinct from
the set~$X$ of generators (see Proposition~\ref{pp:freee} below for an
example combining the two sets of variables). Since relations are
closed under linear combinations, we may take~$E \subseteq \bfk
\frakC_\uo(Y)$ or~$E\subseteq \bfk \frakM_\Omega(Y)$ to
be~$\bfk$-submodules. Elements~$l \in E$ will be called \emph{laws} of
the corresponding variety. In the following, we assume the
noncommutative case but everything can be translated easily to the
commutative case to which we shall return explicitly before
Lemma~\ref{lem:incr}.

For any operated algebra~$\R$ and~$\theta\colon Y\to \R$, by the
universal property of $\bfk\frakM_\Omega(Y)$ as the free
$\Omega$-operated algebra on $Y$, there is a unique morphism of
$\Omega$-operated algebras $\bar{\theta}\colon \bfk\frakM_\Omega(Y)\to
\R$ that extends $\theta$. We use the
notation~$l\sub\theta:=\bar{\theta}(l)$ for the corresponding
\emph{instance} of an~$l \in \bfk \frakM_\uo(Y)$; formally this is the
element of~$\R$ obtained from~$l$ upon replacing every~$y\in Y$
by~$\theta(y)\in \R$, and~$\lc \rc_\omega$ by~$P_\omega$ for~$\omega
\in \uo$. For the special case~$A = \bfk\frakM_\Omega(Y)$ this covers
the substitutions mentioned in Definition~\ref{def:dec-words}.

\begin{defn}
  Let $E$ be a submodule of $\bfk\frakM_\Omega(Y)$.
  \begin{enumerate}
  \item An \emph{$E$-related algebra} is defined to be
    an~$\Omega$-operated algebra~$\R$ such that~$l\sub\theta=0$ for
    any law~$l \in E$ and any assignment~$\theta\colon Y\to \R$.
  \item The \emph{substitution closure}~$\cals(E) \subseteq
    \bfk\frakM_\uo(Y)$ of the laws~$E$ is defined to be the submodule
    spanned by all instances~$l\sub\theta$ with~$l\in E$
    and~$\theta\colon Y\to \bfk\frakM_\Omega(Y)$.
  \end{enumerate}
  If~$E=\{l\}$, we speak of an $l$-algebra, and we write~$\cals(l)$
  for~$\cals(E)$.
\end{defn}

Since it is usually clear from the context that~$E$ denotes a set of
laws (rather than the ground ring), we will often say $E$-algebra
instead of $E$-related algebra. In the terminology of universal
algebra, the category of $E$-algebras (for a fixed set of laws~$E$)
forms a \emph{variety}, which we write here as~$\Alg(\uo|E)$. As
mentioned in the Introduction, the concept of variety is more general
since its operators need not be unary or linear. For our purposes,
this extended generality is not needed and would only complicate
matters, for instance using congruence relations in place of operated
ideals~\mcite[\S1.2]{Cohn2003a}.

\begin{lemma}
  Let $E$ be a submodule of $\bfk\frakM_\Omega(Y)$. Then every
  $E$-algebra is an~$\cals(E)$-algebra, and vice versa.
  \mlabel{lem:subs}
\end{lemma}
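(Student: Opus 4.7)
\smallskip

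\noindent\textbf{Proof proposal.} The plan is to treat the two implications separately, with the backward direction being essentially a one-line inclusion and the forward direction reducing to the universal property of $\bfk\frakM_\Omega(Y)$.

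For the backward direction, I would observe that $E \subseteq \cals(E)$: indeed, taking $\theta = j_Y \colon Y \hookrightarrow \frakM_\Omega(Y)$ the natural embedding, the induced morphism $\overline{j_Y} \colon \bfk\frakM_\Omega(Y) \to \bfk\frakM_\Omega(Y)$ is (by uniqueness in Proposition~\ref{pp:freetmn}) the identity, so $l\sub{j_Y} = l$ for every $l \in E$. Consequently any $\cals(E)$-algebra automatically satisfies every law in $E$ and is therefore an $E$-algebra.

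For the forward direction, let $\R$ be an $E$-algebra and let $l' \in \cals(E)$ together with an assignment $\theta' \colon Y \to \R$ be given; I need to show $l'\sub{\theta'} = 0$. Because the evaluation map $\bar{\theta'}\colon \bfk\frakM_\Omega(Y) \to \R$ is $\bfk$-linear and $\cals(E)$ is spanned by instances of the form $l\sub\theta$ with $l \in E$ and $\theta \colon Y \to \bfk\frakM_\Omega(Y)$, it suffices to treat the case $l' = l\sub\theta$. Here $(l\sub\theta)\sub{\theta'} = \bar{\theta'}\bigl(\bar{\theta}(l)\bigr) = (\bar{\theta'} \circ \bar{\theta})(l)$. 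Now $\bar{\theta'} \circ \bar{\theta}$ is a morphism of $\Omega$-operated algebras $\bfk\frakM_\Omega(Y) \to \R$, so by the uniqueness clause of Proposition~\ref{pp:freetmn} it coincides with $\bar{\psi}$, where $\psi := \bar{\theta'} \circ \theta \colon Y \to \R$. Hence $(l\sub\theta)\sub{\theta'} = l\sub{\psi}$, which vanishes because $\R$ is assumed to be an $E$-algebra and $\psi$ is just one more assignment $Y \to \R$.

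The only real obstacle is bookkeeping with the three different extensions $\bar{\theta}$, $\bar{\theta'}$, and $\bar{\psi}$: the key point is that substitution of an $\bfk\frakM_\Omega(Y)$-valued assignment followed by evaluation in $\R$ is again an evaluation in $\R$, which is exactly the content of the universal property applied to a composite. No characteristic-zero or polarization assumption is needed, and the argument transcribes verbatim to the commutative case using Proposition~\ref{pp:freetm} in place of Proposition~\ref{pp:freetmn}.
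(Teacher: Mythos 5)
Your proposal is correct and follows essentially the same route as the paper: the backward direction via the inclusion $E \subseteq \cals(E)$, and the forward direction by showing that evaluating an instance $l\sub{\theta}$ under an assignment $\theta'\colon Y \to \R$ equals evaluating $l$ under the composite assignment $y \mapsto \theta(y)\sub{\theta'}$. Your explicit appeal to the uniqueness clause of Proposition~\ref{pp:freetmn} to identify $\bar{\theta'} \circ \bar{\theta}$ with $\bar{\psi}$ merely makes precise the step the paper states as $l\sub{\theta}\sub{\eta} = l\sub{\tilde{\eta}}$.
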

\begin{proof}
  The sufficiency is clear since we have~$E \subseteq \cals(E)$. For
  showing the necessity, let~$A$ be an $E$-algebra, and
  take~$l\sub\theta \in \cals(E)$ with~$l\in E$ and~$\theta\colon Y
  \to \bfk \frakM_\uo(Y)$. For any~$\eta\colon Y\to A$,
  define~$\tilde{\eta}\colon Y \to A$ by setting~$\tilde{\eta}(y):=
  \theta(y)\sub{\eta}$ for any~$y\in Y$, that is, $\tilde{\eta}(y)$ is
  obtained by replacing~$y$ in~$\theta(y)$ by~$\eta(y)$. Then we have
  $l\sub\theta\sub{\eta} = l\sub{\tilde{\eta}} = 0$, as~$\R$ is
  an~$E$-algebra and~$l\in E$.
\end{proof}

\begin{exam}
  Let~$\Omega$ be a singleton and~$E = \bfk\{ \lc y_1 y_2\rc - \lc y_1
  \rc y_2 - y_1 \lc y_2\rc \}$. Then
$$\cals(E) = \bfk\{ \lc u v\rc - \lc u\rc v - u \lc v\rc \mid u, v\in
\frakM_\Omega(Y)\}$$ is the substitution closure. (This describes the
variety of differential algebras.)
\end{exam}

Using substitution closure, it is easy to characterize the \emph{free
  $E$-algebra}. Again, this is a special case of a well-known result
in universal algebra~\mcite[Prop.~1.3.6]{Cohn2003a}.

\begin{prop}
  For any submodule~$E \subseteq \bfk\frakM_\uo(Y)$ and any set~$X$,
  let~$S_E=S_E(X)$ denote the operated ideal of~$\bfk\frakM_\uo(X)$
  generated by all~$l\sub\theta$ with~$l\in E$ and~$\theta\colon Y\to
  \bfk\frakM_\uo(X)$. Then the free~$E$-algebra on~$X$ is the quotient
  $F_E(X):=\bfk\frakM_\uo(X)/S_E$.  \mlabel{pp:freee}
\end{prop}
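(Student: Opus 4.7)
The plan is to verify the universal property of the free $E$-algebra directly, in two steps: first to show that $F_E(X) := \bfk\frakM_\uo(X)/S_E$ is itself an $E$-algebra, and then to show that any set map from $X$ into an arbitrary $E$-algebra factors uniquely through the quotient projection $\pi\colon \bfk\frakM_\uo(X) \twoheadrightarrow F_E(X)$. Both steps rely on Proposition \ref{pp:freetmn} together with the tautology that for any $\theta\colon Y \to B$ with operated-algebra extension $\bar\theta\colon \bfk\frakM_\uo(Y) \to B$, the element $l\sub\theta$ is literally $\bar\theta(l)$.

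For the first step, I would fix $l \in E$ and an arbitrary assignment $\theta\colon Y \to F_E(X)$, then use any set-theoretic section of $\pi$ to lift $\theta$ to some $\tilde\theta\colon Y \to \bfk\frakM_\uo(X)$. The operated-algebra extensions $\bar\theta$ and $\pi \circ \overline{\tilde\theta}$ into $F_E(X)$ agree on $Y$, so by uniqueness in Proposition \ref{pp:freetmn} they coincide, giving $l\sub\theta = \pi(l\sub{\tilde\theta})$. Since $l\sub{\tilde\theta}$ is by construction one of the generators of $S_E$, its image in the quotient is zero, establishing that $F_E(X)$ is an $E$-algebra.

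For the second step, given an $E$-algebra $A$ and a set map $f\colon X \to A$, I would take the unique operated-algebra extension $\bar f\colon \bfk\frakM_\uo(X) \to A$ from Proposition \ref{pp:freetmn} and show that its kernel contains $S_E$. Since the kernel of an operated-algebra homomorphism is automatically an operated ideal, it is enough to check this on generators $l\sub\theta$ with $l \in E$ and $\theta\colon Y \to \bfk\frakM_\uo(X)$. Writing the generator as $\bar\theta(l)$ and invoking uniqueness once more to identify $\bar f \circ \bar\theta$ with the extension of $\bar f \circ \theta\colon Y \to A$, one obtains $\bar f(l\sub\theta) = l\sub{\bar f \circ \theta} = 0$, since $A$ is an $E$-algebra. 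Hence $\bar f$ descends to an operated-algebra homomorphism $F_E(X) \to A$ extending $f$, and uniqueness of this descended map follows from uniqueness of $\bar f$ combined with surjectivity of $\pi$. I do not anticipate a serious obstacle; the only point demanding care is keeping track, through the two applications of Proposition \ref{pp:freetmn}, of the fact that substitution commutes correctly with both $\pi$ and $\bar f$.
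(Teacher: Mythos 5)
Your proof is correct and complete. The paper itself gives no argument for this proposition---it simply defers to a general universal-algebra result (Cohn, Prop.~1.3.6)---and your two-step verification of the universal property (first that $S_E$ being an operated ideal makes $F_E(X)$ an $E$-algebra, via lifting assignments through a set-theoretic section of $\pi$ and invoking uniqueness of extensions; second that $\bar f$ kills the generators $l\sub\theta$ because $\bar f\circ\bar\theta$ is the extension of $\bar f\circ\theta$ and $A$ is an $E$-algebra) is precisely the standard argument that the citation stands in for, so nothing is missing.
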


Further exploiting the linear structure of~$\uo$-operated algebras, it
turns out that we may actually assume that~$E$ consists of linear
combinations of multilinear monomials sharing the same variables. Let
us make this precise. Given a monomial~$u\in \frakM_\Omega(Y)$, we
define its \emph{degree in~$y\in Y$}, denoted by $\deg_y u$, as the
number of times that $y$ appears in $u$. Its \emph{total degree} is
given by $\deg u := \sum_{y \in Y} \deg_y u$. Note that if~$\deg_y u =
n$ there exists $q\in \frakM^{\star n}_\Omega(Y\backslash\{y\})$ such
that $u=q\sub{y,\dots,y}$. We call $l$ \emph{multilinear}\footnote{In
  rewriting, this means one can turn the equation~$l = \lambda_1 u_1 +
  \cdots + \lambda_m u_m = 0$ into a \emph{rewrite rule} $u_k \to
  (\lambda_1/\lambda_k) \, u_1 + \cdots + (\lambda_{k-1}/\lambda_k) \,
  u_{k-1} + (\lambda_{k+1}/\lambda_k) \, u_{k+1} + \cdots +
  (\lambda_m/\lambda_k) \, u_m$ for any leading term~$u_k$, and the
  resulting rule will be linear in the sense
  of~\mcite[Def.~6.3.1]{BaaderNipkow1998}. In fact, the rewriting
  terminology allows variables to be absent in terms; this is not
  needed for our present purposes.}  if~$\deg_y l = 1$ for each
variable~$y$ appearing in~$l$.

For any~$l\in E$ and~$y\in Y$, let~$l_{y,n} \: (n \geq 0)$ denote the
linear combination of those monomials of~$l$ that have degree~$n$
in~$y$, with the convention that~$l_{y,n} = \delta_{0,n} l$ if~$y$
does not appear in~$l$. Then~$l$ has the unique \emph{homogeneous
  decomposition}
$$l=\sum\limits_{n\geq 0} l_{y,n}$$ into its $y$-homogeneous parts.

\begin{defn}
  Let~$l\in \bfk \frakM_\Omega(Y)$ be homogeneous in~$y \in Y$
  with~$\deg_y l=n$ such that one has~$l = \sum_{i \le k} c_i \,
  q_i\sub{y, \dots, y}$ with coefficients~$c_i \in \nonz{\bfk}$ and
  monomials~$q_i \in \frakM_\Omega^{\star n}(Y\backslash\{y\})$.  Then
  the \emph{polarization} of~$l$ in~$y$ is
  \begin{displaymath}
    \calp_y(l):= \sum_{\tau\in S_n}\sum_{i \le k} c_i \, q_i\sub{\tau y_1,
      \dots, \tau y_n},    
  \end{displaymath}
  where the \emph{substitution variables}~$y_{1}, \dots, y_{n}\in Y$
  are mutually distinct.
\end{defn}

Note that~$l$ can be recovered (up to a multiple) from~$\calp_y(l)$
through replacing~$y_1,\dots,y_n$ by~$y$; this process is called
\emph{centralization}. For terms containing more variables, we can
also apply polarization so long as the terms are homogeneous in all
variables.

\begin{defn}
  Let~$l\in \bfk\frakM_\Omega(Y)$ be homogeneous in all its
  variables. Then we define its \emph{polarization}~$\calp(l)$ as the
  result of successively polarizing all variables in~$l$.
\end{defn}

A different order of the variables in~$l$ in the polarization process
and a different choice of the substitution variables in~$l$ amounts to
a bijection of the substitution variables. Thus the polarization
of~$l$ is \emph{unique} (up to bijection of variables) and
\emph{multilinear}. Renaming variables if necessary, we may further
assume that the number of variables not appearing in~$E$ is countably
infinite; hence polarization will not run out of substitution
variables.

\begin{exam} \mlabel{exam:pola} In this example, $\uo$ is a singleton
  so that we may abbreviate~$\lc \dots \rc_\omega$ by~$\lc \dots \rc$
  and~$\frakM_\uo(Y)$ by~$\frakM(Y)$.
\begin{enumerate}
\item Consider~$l= \lc y\lc y\rc\rc \in \frakM(Y)$ with~$y\in Y$. Its
  polarization is given by~$\calp(l) = \calp_y(l) = \lc y_1\lc
  y_2\rc\rc + \lc y_2 \lc y_1\rc\rc$, and we recover $2\lc y\lc
  y\rc\rc$ by $y_1, y_2 \mapsto y$.
\item Let~$l= x^2 y^2\in \frakM(Y)$ with~$x,y\in Y$. Then~$\calp_y(l)
  = x^2 y_1y_2 + x^2 y_2y_1$ and hence $\calp(l) = \calp_x(\calp_y(l))
  = y_3y_4y_1y_2 + y_4y_3y_1y_2 + y_3y_4y_2y_1 + y_4y_3y_2y_1$
  with~$y_1, y_2,y_3, y_4\in Y$.
\item For $l=\lc y^2\rc-2\lc y\rc y\in \bfk\frakM(Y)$ we get
  $\calp_y(l)= \lc y_1y_2\rc + \lc y_2y_1 \rc -2\lc y_1\rc y_2 -2\lc
  y_2\rc y_1$.

\end{enumerate}
\end{exam}

\begin{lemma}
  Let~$E\subseteq \bfk\frakM_\Omega(Y)$ be a submodule and~$l\in
  \cals(E)$ arbitrary.  If~$l =\sum_i l_{y,i}$ is the homogeneous
  decomposition of~$l$ in~$y$, we have~$l_{y,i}\in \cals(E)$ for
  each~$i$.
\mlabel{lem:homo}
\end{lemma}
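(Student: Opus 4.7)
The plan is to extract each $y$-homogeneous component of $l$ as an explicit $\bfk$-linear combination of ``scalar rescalings'' of $l$, and then to use the closure of $\cals(E)$ under such rescalings to conclude. Concretely, for each $\lambda \in \bfk$, let $\sigma_\lambda$ denote the unique operated algebra endomorphism of $\bfk\frakM_\Omega(Y)$ extending the set map sending $y$ to $\lambda y$ and fixing all other generators; existence and uniqueness are furnished by Proposition~\ref{pp:freetmn}.

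First I would verify that $\sigma_\lambda$ maps $\cals(E)$ into itself. For a typical generator $l'\sub{\theta}$ of $\cals(E)$ with $l'\in E$ and $\theta\colon Y\to\bfk\frakM_\Omega(Y)$, the composition $\sigma_\lambda\circ\bar\theta$ is an operated algebra morphism from $\bfk\frakM_\Omega(Y)$ to itself extending the substitution $\sigma_\lambda\circ\theta\colon Y\to\bfk\frakM_\Omega(Y)$, so by the uniqueness part of Proposition~\ref{pp:freetmn} it must coincide with $\overline{\sigma_\lambda\circ\theta}$. Hence
$$\sigma_\lambda(l'\sub{\theta}) \;=\; (\sigma_\lambda\circ\bar\theta)(l') \;=\; \overline{\sigma_\lambda\circ\theta}(l') \;=\; l'\sub{\sigma_\lambda\circ\theta},$$
which is still an instance of $l'\in E$ and hence lies in $\cals(E)$; $\bfk$-linearity then extends stability to all of $\cals(E)$. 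Since $\sigma_\lambda$ scales every $y$-homogeneous monomial of degree $i$ by $\lambda^i$, we obtain for every $\lambda\in\bfk$
$$\sigma_\lambda(l) \;=\; \sum_{i=0}^{N}\lambda^{i}\,l_{y,i} \;\in\; \cals(E), \qquad N:=\deg_y l.$$

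Second, I would apply a Vandermonde argument to isolate the $l_{y,i}$. Choose any $N+1$ pairwise distinct scalars $\lambda_0,\dots,\lambda_N\in\bfk$; for instance the images of $0,1,\dots,N\in\QQ$ under the canonical embedding $\QQ\hookrightarrow\bfk$ provided by the $\QQ$-algebra hypothesis. The Vandermonde matrix $(\lambda_k^{i})_{0\le k,i\le N}$ has determinant $\prod_{k<k'}(\lambda_{k'}-\lambda_k)\in\QQ^{\times}\subseteq\nonz{\bfk}$, so it is invertible over $\bfk$. Inverting it expresses each $l_{y,i}$ as a $\bfk$-linear combination of the elements $\sigma_{\lambda_0}(l),\dots,\sigma_{\lambda_N}(l)\in\cals(E)$, whence $l_{y,i}\in\cals(E)$. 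The only real subtlety is the first step, namely the compatibility of $\sigma_\lambda$ with arbitrary substitution instances; this is entirely encoded in the uniqueness clause of the universal property. The remaining Vandermonde extraction is routine, and the standing $\QQ$-algebra assumption on $\bfk$ is precisely what makes it available.
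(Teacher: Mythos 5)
Your proposal is correct and follows essentially the same route as the paper's own proof: substitute $y \mapsto \lambda y$ for several distinct scalars and invert the resulting Vandermonde system. The only difference is that you spell out (via the uniqueness clause of Proposition~\ref{pp:freetmn}) why such rescaling substitutions preserve $\cals(E)$, a point the paper leaves implicit.
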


\begin{proof}
  If $n$ is the maximal degree of~$l$ in~$y$, clearly $l_{y,i} = 0 \in
  \cals(E)$ for~$i>n$. Replacing~$y$ in~$l$ by~$jy$ for~$1 \leq j \leq
  n+1$, we obtain
$$ l\sub{jy} = \sum_{i=0}^n j^i \, l_{y,i}.$$
Regard these equations as a linear system in unknowns~$l_{y,i}$. Then
the coefficient matrix is non-singular as a Vandermonde matrix. Thus
one can solve for~$l_{y,0}, \dots, l_{y,n}$ as $\QQ$-linear
combinations of~$l[jy] \in \cals(E)$, which shows that the~$l_{y,i}$
are themselves in~$\cals(E)$.
\end{proof}

We can use the preceding technique to conclude that the polarized form
of a law is always contained in the substitution closure. To see why
this is so, consider a typical example~$l= \lc y\lc y\rc\rc \in
\frakM(Y)$ with~$y\in Y$. Replacing $y$ by~$y_1+y_2$ with~$y_1,y_2\in
Y$, we have
$$\lc(y_1+y_2) \lc y_1+y_2 \rc\rc = \lc y_1 \lc y_1 \rc\rc+ \lc y_2 \lc y_2 \rc\rc + \lc y_1 \lc y_2 \rc\rc + \lc y_2 \lc y_1 \rc\rc \in \cals(l)$$
and so $\calp_y(l):= \lc y_1 \lc y_2 \rc\rc + \lc y_2 \lc y_1 \rc\rc
\in \cals(l)$ by Lemma~\ref{lem:homo}. Let us state the general
result.

\begin{lemma}
  Let~$E\subseteq \bfk\frakM_\Omega(Y)$ be a submodule, and
  assume~$l\in E$ is homogeneous in~$y$. Then we have~$\calp_y(l) \in
  \cals(E)$.
\mlabel{lem:pola}
\end{lemma}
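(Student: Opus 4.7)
The plan is to generalize the worked example presented immediately before the statement. Since $l$ is homogeneous in $y$ of degree $n := \deg_y l$, write $l = \sum_{i \le k} c_i \, q_i\sub{y, \ldots, y}$ with coefficients $c_i \in \nonz{\bfk}$ and monomials $q_i \in \frakM_\Omega^{\star n}(Y \setminus \{y\})$. Using the renaming convention just before the statement, pick $n$ fresh mutually distinct variables $y_1, \ldots, y_n \in Y$ not appearing in $l$, and define an assignment $\theta \colon Y \to \bfk\frakM_\Omega(Y)$ by $\theta(y) := y_1 + \cdots + y_n$ and $\theta(z) := z$ for all other $z \in Y$. By the very definition of the substitution closure we have $l\sub{\theta} \in \cals(E)$.

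Next, using the $\bfk$-linearity of slotwise substitution from Definition~\ref{def:dec-words}, expand
$$l\sub{\theta} \;=\; \sum_{i \le k} c_i \, q_i\sub{y_1 + \cdots + y_n, \ldots, y_1 + \cdots + y_n} \;=\; \sum_{i \le k} c_i \sum_{(j_1, \ldots, j_n) \in \{1, \ldots, n\}^n} q_i\sub{y_{j_1}, \ldots, y_{j_n}}.$$
Now apply Lemma~\ref{lem:homo} successively in the variables $y_1$, then $y_2$, and so on up to $y_n$, extracting at each step the homogeneous component of degree exactly one. Since each extraction preserves membership in $\cals(E)$, the final multidegree-$(1, \ldots, 1)$ component in $(y_1, \ldots, y_n)$ again lies in $\cals(E)$. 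The surviving terms are precisely those $q_i\sub{y_{j_1}, \ldots, y_{j_n}}$ for which $(j_1, \ldots, j_n)$ is a permutation of $(1, \ldots, n)$, so that this component equals
$$\sum_{\tau \in S_n} \sum_{i \le k} c_i \, q_i\sub{y_{\tau(1)}, \ldots, y_{\tau(n)}} \;=\; \calp_y(l),$$
which completes the argument.

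The main obstacle is really one of bookkeeping: one has to check that successive applications of Lemma~\ref{lem:homo} in different variables commute and genuinely isolate the multilinear component, rather than a larger cross-term. Since $y_1, \ldots, y_n$ are mutually independent variables, each Vandermonde extraction acts only on the selected variable while leaving the $y_k$-degrees of the remaining variables unchanged, so the iterated procedure is well-defined and extracts exactly the component described. The other delicate point is to verify that the combinatorial identification of surviving multi-indices with permutations matches the definition of $\calp_y(l)$; but since $q_i$ is fixed and each multi-index $(j_1, \ldots, j_n)$ with the required property is in bijection with some $\tau \in S_n$ via $\tau(r) = j_r$, this is straightforward.
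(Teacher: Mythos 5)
Your proof is correct, and it reaches the conclusion by a more direct route than the paper. Both arguments share the same germ---substitute a sum of fresh variables for~$y$ and use Lemma~\mref{lem:homo} to peel off homogeneous components---but the executions differ. The paper substitutes only $y \mapsto z_1 + z_2$, isolates the mixed terms~$\tilde{l}_{z_1,j}$, runs an induction on~$\deg_y l$ to polarize each of them in~$z_1$ and~$z_2$ separately, and then needs a counting argument (matching triples~$(I;\tau_1,\tau_2)$ with permutations~$\tau \in S_n$, both sets having cardinality~$\binom{n}{j}j!(n-j)! = n!$) to identify the result with~$\calp_y(l)$. You instead substitute $y \mapsto y_1 + \cdots + y_n$ in one shot, expand multilinearly over all multi-indices in~$\{1,\dots,n\}^n$, and extract the multidegree-$(1,\dots,1)$ component by~$n$ successive applications of Lemma~\mref{lem:homo}; the surviving multi-indices are exactly the permutations, which is the definition of~$\calp_y(l)$. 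This dispenses with both the induction and the final combinatorial matching. The two points you flag as delicate are indeed the only ones that need care, and both are sound: extracting the degree-one part in~$y_m$ merely selects monomials by their (well-defined) $y_m$-degree without disturbing the degrees in the other fresh variables, so the iterated extraction isolates precisely the multilinear part; and since the~$y_m$ are chosen not to occur in~$l$, hence not in the~$q_i$, one has $\deg_{y_m} q_i\sub{y_{j_1},\dots,y_{j_n}} = |\{r : j_r = m\}|$, so the surviving terms correspond bijectively to~$S_n$ as you say. Both proofs use the standing assumption that~$\bfk$ is a $\QQ$-algebra (via the Vandermonde argument in Lemma~\mref{lem:homo}) and the convention that enough substitution variables are available, so nothing is lost; if anything, your version makes the structure of the polarization identity more transparent.
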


\begin{proof}
  Since $\calp_y$ is a $\bfk$-linear operator, we only need to
  consider~$l \in \frakM_\Omega(Y)$.  We prove the result by induction
  on~$\deg_y l$. If~$\deg_y l = 1$, we have~$\calp_y(l) = l\in
  \cals(E)$. Assuming the result for~$\deg_yl\leq n-1$, we consider
  the case~$\deg_yl = n$. By our assumption on~$l$, we have~$ l =
  q\sub{y, \dots, y}$ with $q \in \frakM_\Omega^{\star
    n}(Y\backslash\{y\})$. Replacing~$y$ by~$z_1+z_2$, we obtain
  \begin{equation}
    \cals(E) \ni q\sub{z_1 + z_2, \cdots, z_1 + z_2}
    =  q\sub{z_1, \dots, z_1} +q\sub{z_2, \dots, z_2} +\tilde{l} .
    \mlabel{eq:polar1}
  \end{equation}
  Since~$q\sub{z_1, \dots, z_1}$ and~$q\sub{z_2, \cdots, z_2}$ are in
  the $\bfk$-module~$\cals(E)$ we have~$\tilde{l} \in \cals(E)$. We
  determine the homogeneous decomposition~$\tilde{l} =
  \sum_{j=1}^{n-1} \tilde{l}_{z_1, j}$ with respect to~$z_1$. From
  Lemma~\mref{lem:homo}, we know~$ \tilde{l}_{z_1, j}\in \cals(E)$. We
  note that $\deg_{z_1} \tilde{l} < n$ and likewise~$\deg_{z_2}
  \tilde{l} < n$. By the induction hypothesis, we
  have~$\calp_{z_1}(\tilde{l}_{z_1, j}) \in \cals(E)$ for~$0 < j <
  n$. Moreover, $\tilde{l}_{z_1, j}$ is homogeneous in~$z_1$ of
  degree~$j > 0$, hence~$\tilde{l}_{z_1, j}$ is homogeneous in~$z_2$
  of degree~$n-j < n$. From the definition
  of~$\calp_{z_1}(\tilde{l}_{z_1, j})$, we see
  that~$\calp_{z_1}(\tilde{l}_{z_1, j})$ is also homogeneous in~$z_2$
  of degree~$n-j < n$. By the induction hypothesis again, we obtain
  now~$\calp_{z_2}(\calp_{z_1}(\tilde{l}_{z_1, j})) \in \cals(E)$.
  Thus it suffices to prove
  \begin{equation}
    \calp_y(l)=\calp_{z_2}(\calp_{z_1}(\tilde{l}_{z_1, j}))
    \mlabel{eq:polar2}
  \end{equation}
  for $0 < j < n$. By its definition, $\calp_y(l)$ is a sum of~$n!$
  terms each of the form~$q\sub{\tau y_1, \dots, \tau y_n}$
  with~$\tau\in S_n$, so we write it as
  \begin{equation}
    \calp_y(l) = \sum_{\tau\in S_n} q\sub{\tau y_1, \dots, \tau y_n}.
    \mlabel{eq:polarell}
  \end{equation}
  On the other hand, we note that
  \begin{equation*}
    q\sub{z_1+z_2,\dots,z_1+z_2}=\sum_{I\subseteq [n]} q\sub{I;z_1,z_2},
    \mlabel{eq:polar3}
  \end{equation*}
  where for each subset $I\subseteq [n]$, the term $q\sub{I;z_1,z_2}$ is
  obtained from $q$ by replacing $\star_i$ by $z_1$ for $i\in I$ and
  by $z_2$ otherwise. For $I=[n]$ and $I=\emptyset$ we obtain the
  first two terms in~(\mref{eq:polar1}), while for $0 < j < n$ we get
  \begin{equation*}
    \tilde{l}_{z_1,j}= \sum_{|I|=j} q\sub{I;z_1,z_2}.    
  \end{equation*}
  Then we have
  \begin{equation}
    \calp_{z_2} (\calp_{z_1}(\tilde{l}_{z_1,j})) =
    \sum_{|I|=j,\tau_1,\tau_2} q\sub{I;\tau_1,\tau_2},
    \mlabel{eq:polarrgh}
  \end{equation}
  where $\tau_1$ ranges over all bijections $I \overset{\sim}{\to}
  [j]$ and $\tau_2$ over all bijections $[n]\setminus I
  \overset{\sim}{\to} [n] \setminus [j]$, and where
  $q\sub{I;\tau_1,\tau_2} := q\sub{\tau y_1,\dots, \tau y_n}$ is
  obtained from $q \in \frakC_\Omega^{\star n}(X)$ via the
  permutation~$\tau \in S_n$ defined by~$\tau(i) = \tau_1(i)$ for~$i
  \in I$ and~$\tau(i) = \tau_2(i)$ for~$i \in [n] \setminus I$. By
  this construction, distinct triples~$(I;\tau_1,\tau_2)$ corresponds to
  distinct permutations $\tau\in S_n$, so distinct monomials in
  Eq.~(\mref{eq:polarrgh}) also correspond to distinct monomials in
  Eq.~(\mref{eq:polarell}). However, there are exactly $\binom{n}{j}
  j! (n-j)!=n!$ such triples, so the sums in the two equations must
  agree, and the proof of~(\mref{eq:polar2}) is complete.
\end{proof}

We introduce now polarization for a collection of laws~$E$.  It turns
out that the resulting module, spanned by multilinear monomials,
defines the same variety as the original~$E$.

\begin{defn}
  Let~$E$ be a submodule of~$\bfk\frakM_\Omega(Y)$. Then we define its
  \emph{polarization}~$\calp(E)$ as the submodule
  of~$\bfk\frakM_\Omega(Y)$ spanned by the polarizations of all
  homogeneous components of elements of~$E$.
\end{defn}

\begin{theorem}
  For any submodule~$E \subseteq \bfk\frakM_\Omega(Y)$,
  an~$\Omega$-operated algebra is an~$E$-algebra if and only if it is
  a~$\calp(E)$-algebra.  \mlabel{thm:polar}
\end{theorem}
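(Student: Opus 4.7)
The plan is to use Lemma~\ref{lem:subs} to reduce the statement to the equality $\cals(E) = \cals(\calp(E))$ of substitution closures, and then to establish the two inclusions separately, using polarization in one direction and centralization in the other. Throughout I would freely exploit that $\bfk$ is a $\QQ$-algebra (so that factorials are invertible), as is done in Lemma~\ref{lem:homo}.

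For the inclusion $\calp(E) \subseteq \cals(E)$, I would start with an arbitrary $l \in E$ and iteratively decompose it into multi-homogeneous components with respect to the variables $y_1, \dots, y_k$ appearing in~$l$, writing $l = \sum_{\mathbf{n}} l_{\mathbf{n}}$ where each $l_{\mathbf{n}}$ is homogeneous of degree $n_i$ in $y_i$. An iterated application of Lemma~\ref{lem:homo} (one variable at a time, noting that the lemma is stated for a single variable but can be repeated since each step preserves membership in~$\cals(E)$) shows that every $l_{\mathbf{n}}$ lies in~$\cals(E)$. Then Lemma~\ref{lem:pola}, applied successively to each variable (using that polarization in one variable does not affect homogeneity in the others, as follows from the definition of $\calp_y$), gives $\calp(l_{\mathbf{n}}) \in \cals(E)$. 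Since $\calp(E)$ is by definition spanned by such polarizations, this yields $\calp(E) \subseteq \cals(E)$ and hence $\cals(\calp(E)) \subseteq \cals(E)$.

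For the reverse inclusion $E \subseteq \cals(\calp(E))$, the key tool is the centralization map mentioned in the paragraph after the definition of $\calp_y$: given a multi-homogeneous $l_{\mathbf{n}}$ of multi-degree $(n_1, \dots, n_k)$, substituting every polarization variable $y_i^{(j)}$ back to the original $y_i$ in $\calp(l_{\mathbf{n}})$ recovers $(n_1! \cdots n_k!) \, l_{\mathbf{n}}$. Since this factor is a unit in $\bfk$, we obtain $l_{\mathbf{n}} \in \cals(\calp(l_{\mathbf{n}})) \subseteq \cals(\calp(E))$ as a substitution instance, for each $\mathbf{n}$. Summing yields $l \in \cals(\calp(E))$, hence $E \subseteq \cals(\calp(E))$ and consequently $\cals(E) \subseteq \cals(\calp(E))$. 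Combining with the forward inclusion gives $\cals(E) = \cals(\calp(E))$, and Lemma~\ref{lem:subs} then transfers this equality to an equivalence of the corresponding classes of algebras.

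The main technical obstacle I anticipate is the bookkeeping in the first inclusion: Lemmas~\ref{lem:homo} and~\ref{lem:pola} are each formulated for a single variable, and one must verify that they iterate cleanly in the presence of several variables. The crucial point is that polarizing in $y$ replaces $y$ by mutually distinct fresh variables $y_1, \dots, y_n$ while leaving every other variable untouched, so multi-homogeneity in the remaining variables is preserved and the induction in Lemma~\ref{lem:pola} can be run variable by variable without interference. Everything else amounts to essentially formal manipulation of substitution closures.
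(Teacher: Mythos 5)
Your proof is correct and follows essentially the same route as the paper: reduce to a comparison of substitution closures via Lemma~\ref{lem:subs}, obtain $\calp(E)\subseteq\cals(E)$ by combining Lemma~\ref{lem:homo} (iterated over the variables) with Lemma~\ref{lem:pola}, and recover $E$ from $\calp(E)$ by centralization. If anything you are more careful than the paper's own two-line argument, which asserts ``$E\subseteq\calp(E)$ by construction'' where the honest statement is your $E\subseteq\cals(\calp(E))$, obtained by centralizing and dividing by the unit $n_1!\cdots n_k!$.
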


\begin{proof}
  By construction, we have~$E\subseteq \calp(E)$. By
  Lemma~\mref{lem:subs}, it suffices to prove that~$\calp(E)$ is
  contained in~$\cals(E)$. Choose a law~$l\in E$ and a variable~$y\in
  Y$ appearing in~$l$. Then~we have~$\calp_y(l)\in \cals(E)$ from
  Lemma~\mref{lem:pola}; repeating the process for the other
  variables com\-pletes~the proof.
\end{proof}

Let us now go back to the \emph{commutative case}. The concepts of
degree and total degree can of course be defined in the same way. By a
straightforward induction on the depth of bracket words, one obtains
the following normalization result.

\begin{lemma}
  Every element of $\frakC_\Omega(Y)$ can be uniquely written as a
  bracket word in which all variables of~$Y$ appear in increasing
  order.  \mlabel{lem:incr}
\end{lemma}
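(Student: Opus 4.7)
The plan is to carry out induction on the \emph{depth} of a bracket word, where the depth of $w \in \frakC_\Omega(Y)$ is the least~$n \ge 0$ with $w \in \frakC_{\Omega,n}(Y)$. The key point is that each stage $\frakC_{\Omega,n}(Y)$ was constructed as a \emph{free commutative monoid} over the alphabet $Y \uplus \biguplus_{\omega \in \Omega} \lc \frakC_{\Omega,n-1}(Y)\rc_\omega$, and in any free commutative monoid every element admits a unique expression as a sorted product of its generators once a total order on the generator set is fixed. So the real work is to build compatible total orders on all the $\frakC_{\Omega,n}(Y)$ that extend the given order on~$Y$.

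Concretely, I would fix a total order on the index set~$\Omega$, together with the assumed order on~$Y$, and recursively define a total order~$\preceq_n$ on $\frakC_{\Omega,n}(Y)$: at stage~$0$ use the length-lexicographic order on $C(Y)$ read off the normal form; at stage~$n$ declare every element of~$Y$ to precede every bracketed generator, order two bracketed generators $\lc u\rc_{\omega_1}$ and $\lc v\rc_{\omega_2}$ first by the tag ($\omega_1$ versus $\omega_2$) and then by~$\preceq_{n-1}$ on their normalized contents $u,v$, and finally extend to all of $\frakC_{\Omega,n}(Y)$ again by length-lex using this generator order. The inductive hypothesis guarantees that $u$ and $v$ already have uniquely determined normal forms at depth~$n-1$, so the comparison at depth~$n$ is unambiguous.

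The proof then follows by an immediate induction: the base case is the definition of the free commutative monoid~$C(Y)$, and for the inductive step any $w \in \frakC_{\Omega,n}(Y)$ is written uniquely as a product of letters from $Y$ and bracketed generators $\lc u_i \rc_{\omega_i}$ (with each $u_i$ normalized by the IH), which we then sort by~$\preceq_n$; by construction, this places all elements of~$Y$ at the front in increasing order, as required. Uniqueness at stage~$n$ is inherited from the uniqueness in the free commutative monoid once the generators are known to be normalized, which is the IH.

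The only slightly delicate point — and the main obstacle — is ensuring that the total orders $\preceq_n$ are consistent with the embeddings $i_{n-1,n}\colon \frakC_{\Omega,n-1}(Y)\hookrightarrow\frakC_{\Omega,n}(Y)$, so that passing to the limit $\frakC_\Omega(Y) = \bigcup_n \frakC_{\Omega,n}(Y)$ gives a well-defined normal form independent of the depth chosen. This is where the recursive definition pays off: because $\preceq_n$ restricted to depth-$(n-1)$ generators refines to $\preceq_{n-1}$ on their contents, and because the embedding $i_{n-1,n}$ is a monoid monomorphism that sends each generator to itself, the normal form computed at stage~$n$ coincides with the one at any larger stage. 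Hence the construction descends to a unique canonical form on all of $\frakC_\Omega(Y)$, completing the proof.
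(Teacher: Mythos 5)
Your proof is correct and follows exactly the route the paper indicates (the paper offers only the remark that the lemma follows ``by a straightforward induction on the depth of bracket words''): you induct on depth, exploit the freeness of each stage $\frakC_{\Omega,n}(Y)$ as a commutative monoid, and sort generators under recursively defined total orders compatible with the embeddings $i_{n-1,n}$. The only substance you add beyond the paper's sketch is the explicit bookkeeping of the orders $\preceq_n$ and their consistency across stages, which is handled correctly.
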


The lemma gives an \emph{embedding}~$\varrho\colon
\bfk\frakC_\Omega(Y)\hookrightarrow \bfk\frakM_\Omega(Y)$ as modules.
On the other hand, as algebras, we have ~$\bfk\frakC_\Omega(Y) \cong
\bfk\frakM_\Omega(Y)/\mathord\sim$, where~$\mathord\sim$ is the
operated ideal of~$\bfk\frakM_\Omega(Y)$ generated by the set~$\{uv
-vu \mid u,v\in \frakM_\Omega(Y)\}$. Let~$\pi\colon
\bfk\frakM_\Omega(Y)\to \bfk\frakC_\Omega(Y)$ be the natural
projection. We carry over the notion of polarization from the
noncommutative case, in the following natural way (by abuse of notation
we continue to use the same symbol~$\calp$ for the commutative
polarization).

\begin{defn}
  Let~$E$ be a submodule of~$\bfk\frakC_\Omega(Y)$.  If~$l\in
  \bfk\frakC_\Omega(Y)$ is homogeneous in all its variables, we define
  its polarization as~$\calp(l):= \pi(\calp(\varrho(l)))$. Similarly,
  the \emph{polarization} of the module is defined as~$\calp(E):=
  \pi(\calp(\varrho(E)))$.
\mlabel{defn:npola}
\end{defn}

\begin{exam} \mlabel{exam:npola}
As in Example~\ref{exam:pola}, we suppress the (unique) operator labels.
\begin{enumerate}
\item Let~$l= x^2 y^2\in \bfk\frakC_\Omega(Y)$ with~$x,y\in Y$. Then
  its polarization is~$\calp(l) = 4y_1y_2y_3y_4$ with~$y_1, y_2,y_3,
  y_4\in Y$.
\item
For $l=\lc y^2\rc-2\lc y\rc y\in \bfk\frakC_\Omega(Y)$ we have $\calp_y(l)=  2\lc y_1y_2\rc -2\lc y_1\rc y_2 -2 y_1\lc y_2\rc$.

\end{enumerate}
\end{exam}

As an immediate corollary to Theorem~\mref{thm:polar}, we obtain that
also in the commutative case one may \emph{polarize all laws} and still
describe the same variety.

\begin{coro}
  For any submodule~$E \subseteq \bfk\frakC_\Omega(Y)$,
  an~$\Omega$-operated algebra is an~$E$-algebra if and only if it is
  a~$\calp(E)$-algebra.
  \mlabel{coro:polar}
\end{coro}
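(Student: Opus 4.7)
The plan is to reduce Corollary~\ref{coro:polar} to its noncommutative counterpart, Theorem~\ref{thm:polar}, by passing through the embedding $\varrho\colon \bfk\frakC_\Omega(Y)\hookrightarrow \bfk\frakM_\Omega(Y)$ and the projection $\pi\colon \bfk\frakM_\Omega(Y)\to \bfk\frakC_\Omega(Y)$ introduced just before Definition~\ref{defn:npola}. Since laws in $\bfk\frakC_\Omega(Y)$ can only be unambiguously evaluated in commutative operated algebras, I take $R$ to be a commutative $\Omega$-operated algebra throughout.

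The key preparatory observation is that for any assignment $\theta\colon Y \to R$, the commutative extension $\bar\theta\colon \bfk\frakC_\Omega(Y) \to R$ from Proposition~\ref{pp:freetm} and the noncommutative extension $\tilde\theta\colon \bfk\frakM_\Omega(Y) \to R$ from Proposition~\ref{pp:freetmn} satisfy
\begin{displaymath}
  \tilde\theta \;=\; \bar\theta \circ \pi
  \qquad\text{and}\qquad
  \bar\theta \;=\; \tilde\theta \circ \varrho.
\end{displaymath}
The first identity holds because $\tilde\theta$ vanishes on all commutators (as $R$ is commutative) and therefore descends through $\pi$; the second then follows from $\pi \circ \varrho = \mathrm{id}$ together with the uniqueness statement in Proposition~\ref{pp:freetmn}.

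From here I would chain three equivalences. First, for each $l \in E$ one has $\varrho(l)\sub\theta = \tilde\theta(\varrho(l)) = \bar\theta(l) = l\sub\theta$, so $R$ is an $E$-algebra if and only if it is a $\varrho(E)$-algebra in the noncommutative sense of Theorem~\ref{thm:polar}. Second, by Theorem~\ref{thm:polar}, the latter holds if and only if $R$ is a $\calp(\varrho(E))$-algebra. Third, since $\calp(E) = \pi(\calp(\varrho(E)))$ by Definition~\ref{defn:npola} and since $\tilde\theta(m) = \bar\theta(\pi(m))$ for every $m \in \bfk\frakM_\Omega(Y)$, the vanishing of $\tilde\theta$ on all of $\calp(\varrho(E))$ is equivalent to the vanishing of $\bar\theta$ on all of $\calp(E)$. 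Concatenating the three equivalences yields the corollary.

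The entire argument is bookkeeping around the commutative square $\tilde\theta = \bar\theta \circ \pi$; the only point requiring any attention is the verification of that square, which is, however, immediate from the commutativity of $R$ combined with the universal properties of $\bfk\frakM_\Omega(Y)$ and $\bfk\frakC_\Omega(Y)$. No combinatorial content beyond Theorem~\ref{thm:polar} itself is required.
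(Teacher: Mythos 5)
Your argument is correct and is precisely the route the paper intends: the paper states Corollary~\ref{coro:polar} as an immediate consequence of Theorem~\ref{thm:polar} via the maps $\varrho$ and $\pi$ of Definition~\ref{defn:npola}, and you have simply supplied the bookkeeping (the identity $\tilde\theta = \bar\theta \circ \pi$ for commutative targets) that makes the transfer rigorous. Your explicit restriction to commutative operated algebras is a reasonable and arguably necessary reading of the statement, consistent with the paper's standing convention.
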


In this sense, it is no loss of generality if one requires that
$E$-algebras be described by \emph{multilinear laws} (but see our
remarks in the Introduction). The classical examples for varieties of
operated algebras are indeed of this form. For avoiding cumbersome
notation, we shall henceforth dispense with the brackets in the main
examples, writing~$\der f$ for~$\lc f \rc_\der$ and~$\cum f$ for~$\lc
f \rc_{{\varint}}$. Likewise, we shall often identify the
operations~$P_\omega\colon A \to A$ of an operated algebra~$(A;
P_\omega \Mid \omega \in \uo)$ with their labels~$\omega$. Note also
that an $E$-algebra is to be understood as the corresponding~$\bfk
E$-algebra if~$E$ is not already a $\bfk$-submodule of~$\bfk
\frakC_\uo(Y)$. Of course, equations of the form~$l = r$ are a
shorthand for~$l-r \in \bfk \frakC_\uo(Y)$.

\begin{exam}
  \label{ex:lin-var}
  Take~$Y = \{ f, g \}$ for the variables. Then the four main
  varieties for doing analysis are the following collections
  of~$E$-algebras with operators~$\uo$.
  \begin{enumerate}
  \item\label{it:diff} The variety~$\Diff$ of
    \emph{differential~$\bfk$-algebras}~\cite{Ritt1966,Kolchin1973,CassidyGuoKeigherSit2002}
    of
    weight~$\lambda \in \bfk$:\\
    Here~$\uo(\Diff) = \{ \der \}$, and~$E(\Diff) := \{ \der \, f\!g =
    (\der f) g + f (\der g) + \lambda \, (\der f)(\der g) \}$ consists
    only of the Leibniz axiom.
  \item\label{it:rba} The variety~$\RB$ of
    \emph{Rota-Baxter~$\bfk$-algebras}~\cite{Baxter1960,Rota1969,Rota1995,Guo2012} of
    weight~$\lambda \in \bfk$:\\
    Here~$\uo(\RB) = \{ \cum \}$ and~$E(\RB) := \{ \smash{(\cum f)
      (\cum g) = \cum f \cum g + \cum g \cum f + \lambda \, \cum f\!g}
    \}$ consists of the Rota-Baxter axiom.
  \item The variety~$\DRB$ of \emph{differential Rota-Baxter
      ~$\bfk$-algebras}~\cite{GuoKeigher2008} of weight~$\lambda \in \bfk$:\\
    Now~$\uo(\DRB) = \uo(\Diff) \cup \uo(\RB) = \{ \der, \cum \}$
    contains both operators, and the laws are given by~$E(\DRB) =
    E(\Diff) \cup E(\RB) \cup \{ \der \cum f = f \}$. The last law is
    the so-called section axiom, which specifies a ``weak coupling''
    between~$\der$ and~$\cum$.
  \item The variety~$\ID$ of \emph{integro-differential
      ~$\bfk$-algebras}~\mcite{GuoRegensburgerRosenkranz2012} of weight~$\lambda \in \bfk$:\\
    This has the same operators~$\uo(\ID) = \uo(\DRB)$ but different
    laws---the weak coupling of~$\DRB$ is replaced by a stronger
    coupling~\mcite[Thm.~2.5]{GuoRegensburgerRosenkranz2012}: From
    various equivalent formulations, we choose
    \begin{equation*}
      E(\ID) = E(\Diff) \,\cup\, \{ f \, \cum g = \cum f' \cum g + \cum
      f\!g + \lambda \, \cum f' \! g, \quad \der \cum f = f \},
    \end{equation*}
    where the middle law describes integration by parts (which is
    strictly stronger than the Rota-Baxter axiom of~$\RB$).
  \end{enumerate}
  As in~\mcite[Def.~8]{RosenkranzRegensburger2008a} we call a
  differential (differential Rota-Baxter, integro-differential)
  algebra \emph{ordinary} if~$\ker{\der} = \bfk$. For
  example,~$(\bfk[x], d/dx)$ is ordinary but~$(\bfk[x,y], \der/\der
  x$) is not.
\end{exam}

The above varieties provide the basic motivation for our study of the
operator rings (to be defined in the next section), which are crucial
for \emph{solving boundary problems} in an algebraic setting. While
this is not the focus of the present paper, the reader may refer to
the end of the next section for some remarks on this application (and
especially on the role played by differential Rota-Baxter algebras).

\section{Operator Rings and Modules}
\label{sec:oprings-modules}

We begin now with the description of the operator rings for a given
variety of operated algebras. This proceeds in \emph{two steps}---we
introduce first a class of operator rings that does not take into
account any law that might be imposed on a given operated algebra
(Definition~\ref{de:freeop}). In the second step we can then impose
the given laws in a suitable form onto the free operators constructed
in the first step (Definition~\ref{def:eopring}). As mentioned in the
Introduction, we work here only with commutative coefficient algebras,
so from now on everything is commutative (except of course the
operator rings).

\begin{defn}
  Let~$(\R; P_\omega \Mid \omega\in \Omega)$ be a commutative
  $\Omega$-operated algebra. Then we define the induced \emph{ring of
    free operators} as the free product $\opra := A*\bfk \langle
  \uo\rangle$.
\label{de:freeop}
\end{defn}

The obvious abuse of notation~$\opra$ is harmless since confusion
with the commutative polynomial ring is unlikely. See also
Remark~\ref{rem:suppress-var} for further justification of this
notation.

If~$A$ is an $E$-algebra for a submodule~$E \subseteq
\bfk\frakC_\Omega(Y)$, we would like an operator ring that reflects
the laws of~$E$; we will construct it as a suitable quotient of the
free operators~$\opra$, using the following \emph{translation from
  laws to operators}. The operator corresponding to a specific law
shall be called the induced relator. For example, a differential
ring~$(\R, \der)$ is an $\uo$-operated algebra with~$\uo = \{ \der \}$
satisfying the Leibniz law~$(fg)' = f' g + f g'$, which induces the
relator~$\der f - f \der - f' \in \opra$; see
Proposition~\ref{prop:lin-op-rings} \ref{it:diffop} for more details.

From Corollary~\mref{coro:polar}, we may assume that~$E \subseteq
\bfk\frakC_\Omega(Y)$ is spanned by homogeneous and multilinear
elements. We may also assume that none of these is of total degree~$0$
since such laws are either redundant (if $l=0$) or else describe a
trivial variety. Since~$Y$ is countable, we can write its elements
as~$y_j \: (j \in \NN)$. Then every basis element of~$E$ having total
degree~$k+1$ can be written in the variables~$y_0, \dots, y_k$ by a
change of variables; the resulting variety remains the same by
Lemma~\mref{lem:subs}. We call such basis elements the \emph{standard
  laws} for the variety. For the translation process, we think of the
lead variable~$y_0$ as the \emph{argument} of the induced relator
with~$y_1, \dots, y_k$ constituting its \emph{parameters}. The latter
can be instantiated by \emph{assignments}, which we view as arbitrary
maps~$\fraka\colon Y' \to A$ on the parameter set~$Y' := Y \setminus
\{ y_0 \}$. Since arguments are processed from right to left, we shall
use the order~$y_k, \dots, y_1, y_0$ in the sequel.

The \emph{induced relator}~$\trl{l}{\fraka} \in \opra$ for a standard
law~$l$ under an assignment~$\fraka$ is now defined by recursion on
the depth of~$l$. Taking~$l \mapsto \trl{l}{\fraka}$ to be
$\bfk$-linear, it suffices to consider monomials~$l$. For the base
case take~$l \in \frakC_{\uo,0}(y_k, \dots, y_1, y_0) = C(y_k, \dots,
y_1, y_0)$ with~$l$ of total degree~$k+1$. By multilinearity~$l = y_k
\cdots y_1 \, y_0$, and we set~$\trl{l}{\fraka} := \fraka(y_k) \cdots
\fraka(y_1)$. Now assume~$\trl{\dots}{\fraka}$ has been defined for
monomial standard laws of depth at most~$n$ and consider $l \in
\frakC_{\uo,n+1}(y_k, \dots, y_1, y_0)$. By multilinearity and the
definition of~$\frakC_{\uo,n+1}$, there exists~$t \in
\frakC_{\uo,n+1}(y_k, \dots, y_1)$ such that either~$l = t y_0$
or~$l = t \lc l' \rc_\omega$ for a certain operator label~$\omega \in
\uo$ and~$l' \in \frakC_{\uo,n}(y_k, \dots, y_1, y_0)$ being a
monomial standard law of depth~$n$. We set~$\trl{l}{\fraka} :=
\bar{\fraka}(t)$ in the former case and use the
recursion~$\trl{l}{\fraka} := \bar{\fraka}(t) \, \omega \,
\trl{l'}{\fraka}$ in the latter, where~$\bar{\fraka}\colon
\frakC_{\uo,n+1}(y_k, \dots, y_1) \to A$ is the monoid homomorphism
induced by the (restricted) assignment map~$\fraka\colon \{ y_k,
\dots, y_1 \} \to A$ through the universal property
of~$\frakC_{\uo,n+1}(y_k, \dots, y_1)$. This completes the definition
of~$\trl{l}{\fraka}$. We can now introduce the ring of $E$-operators
as the quotient of the free operators modulo the translated variety
laws.

\begin{defn}
  Let~$A$ be an $E$-algebra for a submodule~$E \subseteq \bfk\frakC_\Omega(Y)$. Then we define the
  \emph{ring of $E$-operators} as $\eopra := A[\uo] / [E]$, where $[E] \subset \opra$ is the ideal
  generated by~$\trl{l}{\fraka}$ for all standard laws~$l \in E$ and
  assignments~$\mathfrak{a}\colon Y' \to A$.  \mlabel{def:eopring}
\end{defn}

Let us now look at the \emph{classical linear operator rings} for the
varieties of Example~\ref{ex:lin-var}. Each of them comes with a
noncommutative \emph{Gr{\"o}bner basis} and term order, providing
transparent canonical forms and enabling a computational treatment
via the well-known Diamond Lemma~\mcite[Thm.~1.2]{Bergman1978}. As in
the latter reference, we write the elements of the Gr{\"o}bner basis
in the form~$m \to p$ instead of~$m-p$ in order to emphasize the role
of the leading monomial~$m$ and the tail polynomial~$p$, suggesting
their use as rewrite rules.

\begin{remark}
  In the sequel, we identify identities with the varieties they
  define; for example we write~$\galg[\der \Mid \Diff]$
  for~$\galg[\der \Mid E(\Diff)]$, with~$E(\Diff)$ taken from
  Example~\ref{ex:lin-var}\ref{it:diff}. In practice, this notation is
  of course contracted to~$\galg[\der]$, further justifying the abuse
  of notation mentioned after Definition~\mref{de:freeop}.
  \mlabel{rem:suppress-var}
\end{remark}

\begin{prop}
  \label{prop:lin-op-rings}
  Let~$>$ be any graded lexicographic term order
  on~$\fopring{\galg}{\Omega}$ satisfying~$\der > f$ for all~$f \in \galg$
  if~$\der \in \Omega$. Then the following four linear operator
  rings\footnote{Note that we rely on the context to disambiguate the
    notations~$\galg[\der, \vcum]$ and~$\galg[\der, \cum]$. In the
    frame of this paper, a Rota-Baxter operator will always be denoted
    by~$\vcum$ when it comes from a differential Rota-Baxter algebra,
    and by~$\cum$ when it comes from an integro-differential algebra.}
  can be characterized by Gr{\"o}bner bases as follows (primes and
  backprimes refer to the operations in~$\galg$):
  \begin{enumerate}
  \item\label{it:diffop} Given~$(\galg, \der) \in \Diff$, the
    \emph{ring of differential operators}~$\galg[\der] := \galg[\der
    \Mid \Diff]$ has the Gr{\"o}bner basis~$\grb(\Diff) = \{ \der f
    \to f \der + \lambda \, f' \der + f' \mid (f \in \galg) \}$.
  \item\label{it:intop} Given~$(\galg, \cum) \in \RB$, the \emph{ring
      of integral operators}~$\galg[\cum] := \galg[\cum\! \Mid \RB]$
    has the Gr{\"o}bner basis~$\grb(\RB) = \{ \cum f \cum \to
    f\backquote \cum - \cum f\backquote - \lambda \, \cum f \mid f \in
    \galg \}$.
  \item\label{it:diffrbop} Given~$(\galg, \der, \vcum) \in \DRB$, we
    consider next the \emph{ring of differential Rota-Baxter
      operators}~$\galg[\der,\vcum] := \galg[\der,\vcum\! \Mid
    \DRB]$. Its Gr{\"o}bner basis is given by the combined rewrite
    rules~$\grb(\DRB) = \grb(\Diff) \cup \grb(\RB) \cup \{ \der \cum
    \to 1 \}$.
  \item\label{it:intdiffop} For~$(\galg, \der, \cum) \in \ID$, the \emph{ring of
      integro-differential operators}~$\galg[\der,\cum] := \galg[\der,\cum \Mid \ID]$ has
    Gr{\"o}bner
    basis~$\grb(\ID) := \smash{\grb(\DRB) \cup \{ \cum f \der \to \underline{f} - \cum
      \underline{f}' - \evl(\underline{f}) \, \evl \mid f \in \galg \}}$,
    provided\footnote{This is of course always satisfied in the zero weight case (with trivial
      shift). But it is also satisfied in the classical example with weight~$\lambda = \pm 1$: the
      sequence space~$M^\ZZ$ over a~$\bfk$-module~$M$ with forward/backward difference as
      derivation. This has increment/decrement as mutually inverse shifts.}  the
    \emph{shift}~$f \mapsto f + \lambda f'$ has an inverse~$f \mapsto \underline{f}$.
  \end{enumerate}
  We have~$\galg[\der,\cum] = \galg[\der] \dotplus \galg[\cum]
  \!\setminus\! \galg \dotplus (\evl)$ as $\bfk$-modules,
  where~$\galg[\der, \cum]$ contains both~$\galg[\der]$
  and~$\galg[\cum]$ as subalgebras. Moreover, if~$I$ is the ideal
  generated by~$\{ \evl f - \evl(f) \, \evl \mid f \in \galg \}$, we
  have~$\galg[\der,\cum] \cong \galg[\der,\vcum] / I$, where~$\vcum :=
  \cum$ is viewed as part of~$(\galg, \der, \vcum) \in \DRB$.
\end{prop}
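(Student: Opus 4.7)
The plan is to handle parts~(a)--(d) by a uniform procedure and then derive the decomposition and the quotient isomorphism from the resulting normal forms. For each variety I would translate the (polarized) standard laws of Example~\ref{ex:lin-var} into induced relators in $\opra$ via Definition~\ref{def:eopring}, verify that these generate the same ideal as the claimed rewrite rules, and finally apply Bergman's Diamond Lemma~\cite{Bergman1978} to confirm that the rules form a Gr\"obner basis under the prescribed graded lexicographic order.

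The translations themselves are direct. In~(a) the Leibniz law with argument $y_0$ and parameter $y_1 \mapsto f$ yields the relator $\der f - f\der - \lambda f'\der - f'$, giving the rule $\der f \to f\der + \lambda f'\der + f'$. In~(b) the Rota--Baxter axiom (with $y_1 \mapsto f$ and argument $y_0$ acted on by $\cum$) produces the relator $\cum f \cum - f\backquote \cum + \cum f\backquote + \lambda \cum f$, where $f\backquote = \cum f \in \galg$. In~(c) the section axiom contributes $\der \cum - 1$ immediately. In~(d) the integration-by-parts law under $\fraka(y_1) = f$ becomes $f \cum - \cum f' \cum - \cum f - \lambda \cum f'$; substituting $f \mapsto \underline{f}$ and rearranging so that the leading monomial $\cum f \der$ is isolated requires invertibility of the shift $f \mapsto f + \lambda f'$, with the residual constant term producing the contribution $\evl(\underline{f})\,\evl$. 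Confluence must then be verified at every overlap: for $\grb(\Diff)$ the single ambiguity $\der fg$ resolves via the weight-$\lambda$ Leibniz expansion of $(fg)'$; for $\grb(\RB)$ the ambiguity $\cum f \cum g \cum$ resolves after a double application of the Rota--Baxter rule; for $\grb(\DRB)$ the new ambiguities $\der \cum f$ and $\der \cum f \cum$ resolve by combining $\der \cum = 1$ with the identity $f\backquote' = f$ furnished by the section axiom; and for $\grb(\ID)$ the final ambiguities at $\cum f \der g$ and $\cum f \der \cum$ resolve by threading all of the above through the shift-inverse bookkeeping encoded in $\underline{(\cdot)}$.

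For the final claims I would read off the normal forms of $\grb(\ID)$: a reduced monomial must avoid each of $\der f$, $\cum f \cum$, $\der \cum$, and $\cum f \der$. A case analysis produces three families sharing only $\galg$: pure differential monomials $f\der^n$ spanning $\galg[\der]$, the monomials $f$ and $f \cum g$ spanning $\galg[\cum]$, and mixed monomials that necessarily lie in the two-sided ideal $(\evl)$, since any reduced word carrying both $\cum$ and $\der$ must separate them through a factor of $\evl$. This yields the decomposition $\galg[\der,\cum] = \galg[\der] \dotplus \galg[\cum] \setminus \galg \dotplus (\evl)$ as $\bfk$-modules. For the isomorphism, expand the defining relation $\evl f - \evl(f)\,\evl$ of $I$ using $\evl = 1 - \cum\der$ together with the Leibniz rule of $\grb(\DRB)$: the result is linear in $\cum f \der$, and solving for it after applying the shift inverse reproduces exactly the $\cum f \der$ rule of $\grb(\ID)$, whence $\galg[\der,\vcum]/I$ and $\galg[\der,\cum]$ share the same Gr\"obner basis. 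I expect the principal obstacle to be the combinatorial verification in~(d): keeping the shift inversion $\underline{(\cdot)}$ consistent across multiple overlapping reductions and ensuring that the induced $\evl$-term is correctly tracked is the most delicate point.
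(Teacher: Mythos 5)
Your overall strategy---translate the laws into relators, confirm the rewrite systems via the Diamond Lemma, read off normal forms for the decomposition, and match the generators of~$I$ against the extra $\ID$-rule for the quotient---is the same as the paper's, and your treatment of (a)--(c) and of the final isomorphism is essentially what the paper does. But there is a genuine gap in~(d), precisely at the point you yourself flag as delicate. The direct translation of the integration-by-parts law $f\cum g = \cum f'\cum g + \cum f\!g + \lambda\,\cum f'g$ is indeed the relator $f\cum - \cum f'\cum - \cum f - \lambda\,\cum f'$ that you write down, but this element contains neither~$\der$ nor~$\evl$; its leading monomial under a graded order is~$\cum f'\cum$, not~$\cum f\der$. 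No amount of ``rearranging'' or shift inversion can isolate a monomial that is absent. The paper bridges this by first replacing the axiom with an equivalent one: integration by parts is equivalent to multiplicativity of the evaluation, $\evl(fg)=\evl(f)\,\evl(g)$, and combining this with $fg-\evl(fg)=\cum f'g+\cum fg'+\lambda\,\cum f'g'$ (a consequence of $\evl=1-\cum\der$ and the Leibniz axiom) produces the law $\cum fg'-fg+\cum f'g+\lambda\,\cum f'g'+\evl(f)\,\evl(g)=0$, whose translation genuinely contains $\cum(f+\lambda f')\,\der$ and the term $\evl(f)\,\evl$; only then does inverting the shift give the stated rule. One could instead right-multiply your relator by~$\der$ and reduce modulo the other relators, but that computation still needs the derived identity $\cum f\evl=(\cum f)\,\evl$ and hence the same multiplicativity input, and one must also check the converse inclusion so that the new rule together with~$\grb(\DRB)$ generates the \emph{same} relator ideal as the raw axiom.

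Two smaller points. First, your normal-form case analysis for the $\bfk$-module decomposition is too quick: $\evl=1-\cum\der$ is not a monomial, so a ``reduced word carrying both $\cum$ and $\der$'' cannot literally contain a factor~$\evl$; identifying the third component with the ideal~$(\evl)$ requires a change of basis, which is why the paper defers both this and the confluence of~$\grb(\ID)$ to the earlier work of Rosenkranz--Regensburger (and Tec--Buchberger) rather than re-deriving them. Second, if you do verify confluence of~$\grb(\ID)$ by hand, your list of ambiguities is incomplete: besides $\cum f\der g$ and $\cum f\der\cum$ you must also resolve $\cum f\cum g\,\der$ and $\der\cum f\der$; and in~(c) the word $\der\cum f$ is not an overlap at all, since no leading monomial there begins with~$\cum f$ alone.
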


\begin{proof}
  Let us first prove the four items stated in the proposition (viewing
  all axioms in the main variable~$g$ and using arbitrary
  assignments~$\fraka$ with~$\fraka(f) \in \galg$ shortened to~$f$):
  \begin{enumerate}
  \item Clearly, the only relators are~$[l]_{\fraka} = \der f - f' - f \der - \lambda \, f' \der$,
    corresponding to the Leibniz
    axiom~$l := \der f\!g - (\der f) g - f (\der g) - \lambda \, (\der f)(\der g) = 0$.
    From~$\der > f$ one sees that the leading monomial is~$\der f$. There is just one S-polynomial
    coming from the overlap ambiguity~$\der f\!g$ between the
    rule~$\der f \to f \der + \lambda \, f' \der + f'$ and the (tacit)
    rule~$f \! g \to f *_\galg g$. Using the Leibniz rule in~$\galg$, one checks immediately that
    the S-polynomial reduces to zero, so~$\grb(\Diff)$ is indeed a Gr{\"o}bner basis.
  \item Here the Rota-Baxter axiom~$l := \smash{(\cum f) (\cum g) -
      \cum f \cum g - \cum g \cum f - \lambda \, \cum f\!g} = 0$
    yields the relators~$[l]_{\fraka} = f\backquote \cum - \cum f \cum -
    \cum f\backquote - \lambda \, \cum f$ whose leading monomial
    is~$\cum f \cum$ because the term order is graded. One obtains an
    S-polynomial from the self-overlap~$\cum f \cum \bar{f} \cum$ of
    the rule~$\cum f \cum \to f\backquote \cum - \cum f\backquote -
    \lambda \, \cum f$. Again one checks that this S-polynomial
    reduces to zero, and~$\grb(\RB)$ is thus a Gr{\"o}bner basis.
  \item The relators are those of~\ref{it:diffop} and~\ref{it:intop},
    and additionally~$\der \cum - 1$ whose corresponding rule is
    clearly~$\der \cum \to 1$ because of the grading. Apart from the
    previous ones, we have the additional overlap ambiguity~$\der \cum
    f \cum$, and again its S-polynomial immediately reduces to zero so
    that~$\grb(\DRB)$ is a Gr{\"o}bner basis.
  \item From the definition~$\evl := 1 - \cum \der$ and the Leibniz rule we have the
    tautological relation
    \begin{equation*}
      fg - \evl(fg) = \cum f'g + \cum fg' + \lambda \, \cum f'g'.
    \end{equation*}
    Let us start by recalling~\mcite[Thm.~2.5(b)]{GuoRegensburgerRosenkranz2012} that the well-known
    integration-by-parts
    axiom~$g \, \cum f - \cum g' \cum f - \cum f\!g - \lambda \, \cum g' \!  f = 0$
    characterizing~$\ID$ is equivalent to the multiplicativity
    condition~$\evl(fg) = \evl(f) \, \evl(g)$. Indeed, by the definition of~$\evl$, the condition
    is~$\cum (fg)' + \cum f' \cdot \cum g' = f \cum g' + g \cum f'$. From this one obtains the axiom
    by expanding~$(fg)'$ according to the (weighted) Leibniz rule and
    substituting~$f \rightsquigarrow \cum f$. Conversely, the axiom of~$\ID$ implies
    that~$\im{\cum}$ is an ideal of~$\galg$. From the definition of~$\evl$ one has the
    identity~$fg = \evl{f} \, \evl{g} + \evl{f} \, \cum g' + \evl{g} \, \cum f' + \cum f' \cdot \cum
    g'$,
    which yields multiplicativity upon applying~$\evl$ since~$\evl$ projects onto~$\ker{\der}$ so
    that~$\evl{f} \, \evl{g} \in \ker{\der}$ is left invariant (note that~$\ker{\der}$ is a
    subalgebra of~$\galg$ because of the Leibniz rule). The three remaining terms are all in the
    ideal~$\cum f$, which is the complement of~$\ker{\der}$ under the projection, hence they vanish
    under~$\evl$.

    We exploit the equivalent characterization of~$\ID$ in terms~$\evl(fg) = \evl(f) \, \evl(g)$ by
    substituting the tautological relation from above to obtain the equivalent law
    \begin{equation*}
      l := \cum fg' - fg + \cum f'g + \lambda \,
      \cum f' g' + \evl(f) \, \evl(g) = 0.
    \end{equation*}
    This axiom gives rise to the new
    relator~$[l]_{\fraka} = \cum (f + \lambda f') \, \der - f + \cum f' + \evl(f) \, \evl$, which
    yields the
    rule~$\cum f \der \to \underline{f} - \cum \underline{f}' - \evl(\underline{f}) \, \evl$ upon
    replacing~$f$ by~$\underline{f}$ and picking~$\cum f \der$ as the leading monomial due to the
    grading. Since the plain Rota-Baxter axiom is implied by the integration-by-parts
    axiom~\mcite[Lem.~2.3(b)]{GuoRegensburgerRosenkranz2012}, the relator constructed in
    Item~\ref{it:diffrbop} is also contained in the current relator ideal, hence the corresponding
    rule is admissible in~$\grb(\ID)$. For seeing that this is again a Gr{\"o}bner basis, we refer
    to the proof of~\mcite[Prop.~13]{RosenkranzRegensburger2008a}. The latter assumes that~$\bfk$ is
    a field and uses a $\bfk$-basis of~$\galg$ but this is only a convenience tuned to the
    algorithmic treatment. As pointed out
    after~\mcite[Prop.~26]{RosenkranzRegensburgerTecBuchberger2012}, choosing a basis is avoided by
    factoring out the linear ideal (this happens in the formation of the free operators~$\opra$ in
    the current setup). Note also that here we take~$\Phi = \{ \evl \}$, which means all rules
    of~\mcite[Table~1]{RosenkranzRegensburger2008a} with characters~$\phi, \psi \in \Phi$ on the
    right-hand side are absent.\footnote{\label{fn:no-eval-rules}The character~$\evl \in \Phi$ is
      not part of the operator set~$\uo$, and its appearance on the right-hand side is to be
      understood merely as an abbreviation~$\evl := 1 - \cum \der$. Moreover, the corresponding
      rules with~$\evl$ on the left-hand side are not required in~$E$ since they follow from the
      other rules.}  With this understanding, the above definition of~$\galg[\der, \cum]$ coincides
    with the one in~\mcite{RosenkranzRegensburger2008a}, which therefore establishes~$\grb(\ID)$ as
    a Gr{\"o}bner basis.
  \end{enumerate}
  We prove now the $\bfk$-module decomposition
  \begin{equation}
    \label{eq:decomp}
    \galg[\der,\cum \Mid \ID] = \galg[\der \Mid \Diff] \dotplus \galg[\cum \Mid \RB]
    \!\setminus\! \galg \dotplus (\evl)
  \end{equation}
  with~$(\evl) \subset \galg[\der,\cum \Mid \ID]$ being the two-sided ideal generated by~$\evl$.
  Note that here and in the rest of this proof, we renounce the abbreviation
  of~$\galg[\Omega \Mid E]$ by~$\galg[\Omega]$ used in the statement of the proposition. This is
  because we need to distinguish the free operator ring from various $E$-operator
  rings. Furthermore, we write~$\galg[\Omega]_E$ for the $\bfk$-submodule of normal forms
  in~$\galg[\Omega]$ with respect to the reduction system induced by~$E$ and the given term order
  on~$\galg[\Omega]$. By the well-known Diamond Lemma~\mcite[Thm.~1.2]{Bergman1978}, we
  have~$\galg[\der, \cum] = \galg[\der, \cum]_{\ID} \dotplus [\ID]$. We claim that it suffices to
  prove
  \begin{equation}
    \label{eq:decomp-norm}
    \galg[\der, \cum]_{\ID} = \galg[\der]_{\Diff} \dotplus \galg[\cum]_{\RB}\!\!\setminus\!\galg
    \dotplus (\evl)_{\ID} .
  \end{equation}
  Indeed, substituting the decomposition~\eqref{eq:decomp-norm} into the Diamond-Lemma decomposition
  and then taking the quotient by~$[\ID]$ yields\footnote{Note that
    $M = A \dotplus B \dotplus C \dotplus Z$ implies
    $M/Z = (A+Z)/Z \dotplus (B+Z)/Z \dotplus (C+Z)/Z$ for arbitrary submodules~$A,B,C,Z$ of some
    module~$M$.}
  \begin{equation}
    \label{eq:decomp-norm-again}
    \galg[\der, \cum \Mid \ID] = \frac{\galg[\der]_{\Diff}+[\ID]}{[\ID]} \dotplus
    \frac{\galg[\cum]_{\RB}\!\!\setminus\!\galg+[\ID]}{[\ID]} \dotplus
    \frac{(\evl)_{\ID}+[\ID]}{[\ID]} .
  \end{equation}
  Since~$\Diff \subset \ID$, we may replace the first denominator on the right-hand side
  of~\eqref{eq:decomp-norm-again}
  by~$\big( \galg[\der]_{\Diff}+[\Diff] \big) + [\ID] = \galg[\der] + [\ID]$, using now the Diamond
  Lemma for~$\Diff$. In the same way, the second denominator is given
  by~$\galg[\cum] \setminus \galg + [\ID]$. For the third denominator we get~$(\evl)$ directly from
  the Diamond Lemma. Applying the second isomorphism theorem to the first and second summand
  yields~\eqref{eq:decomp} since~$[\ID] \cap \galg[\der] = [\Diff]$
  and~$[\ID] \cap \big( \galg[\cum] \setminus \galg \big) = [\RB]$, noting that~$(\evl)/[\ID]$ is
  just~$(\evl) \subset \galg[\der,\cum \Mid \ID]$ in~\eqref{eq:decomp}.

  We give now a proof of~\eqref{eq:decomp-norm}, which follows closely the more general
  argument\footnote{The proofs in~\mcite{RosenkranzRegensburgerTecBuchberger2012} use only
    ring-theoretic properties of~$\bfk$; no field or zero characteristic is required. They are more
    general in that they allow character sets~$\Phi \supsetneq \{ \evl \}.$} given
  in~\mcite{RosenkranzRegensburgerTecBuchberger2012}, specifically Lemma~23 as well as
  Propositions~25 and 26 therein. Let us start by analyzing the irreducible monomials.  We claim
  that each monomial~$w \in \galg[\der,\cum]_{\ID}$ is either of the
  form~$w = f \der^i \in \galg[\der]_{\Diff}$ $(f \in \galg, i \ge 0)$
  or~$w = f \cum g \in \galg[\cum]_{\RB}$ $(f, g \in \galg)$ or $f \cum \der^{i+1}$
  $(f \in \galg, i \ge 0)$. If~$w$ contains any occurrences of~$\der$, they must be in the tail
  of~$w$ since~$\der f \; (f \in \galg)$ is reducible relative to~$[\Diff] \subset \ID$ and
  also~$\der \cum$ relative to~$[\der \cum - 1] \subset [\ID]$. This means we have~$w = v \der^i$
  with prefix monomial~$v \in \galg[\cum]$ and~$i \ge 0$. But then~$v$ can have at most one
  occurrence of~$\cum$ since~$\cum f \cum \; (f \in \galg)$ is reducible relative
  to~$[\RB] \subset [\ID]$. Hence we have either~$w = g \cum f \der^i$ or~$w = f \der^i$ for
  some~$f, g \in \galg$. In the latter case, we obtain~$w \in \galg[\der]_{\Diff}$ and are done.  In
  the former case, we can must have~$i = 0$ or~$f = 1$ since otherwise~$w$ is reducible relative to
  relative
  to~$[\cum f \der - \underline{f} + \cum \underline{f}' + \evl(\underline{f}) \, \evl] \subset
  [\ID]$.
  Hence we have either the case~$w = g \cum f \in \galg[\cum]_{\RB}$, where~$f=1$ is possible.  Or
  else we have the irreducible monomial~$w = g \cum \der^i$ with~$i>0$.
  
  Next we analyze the irreducible elements of~$(\evl)_{\ID}$; unlike those of~$\galg[\der]_{\Diff}$
  and~$\galg[\cum]_{\Diff}$, these are \emph{not} monomials. Since any element of~$(\evl)_{\ID}$ can
  be written as a $\bfk$-linear combination of~$w \evl \tilde{w} \neq 0$ with
  monomials~$w, \tilde{w}$, it suffices to analyze those. As we have seen above, if~$w$ contains any
  occurrences of~$\der$, they must be at its tail. But since~$\der \evl = 0$, there can in fact be
  no~$\der$ in~$w$. By the above analysis of normal forms for~$w$, the only remaining possibilities
  are~$w = f$ and~$w = f \cum g$ for some~$f, g \in \galg$. But the latter is also excluded
  since~$\cum g \evl = \cum g - \cum g \cum \der$ is reducible relative to~$[\RB] \subset \ID$.
  Hence we conclude~$w = f$.  Regarding the monomial~$\tilde{m}$, we it cannot start with
  any~$g \in \galg$ since~$\evl g = g - \cum \der g$ is reducible relative to~$[\Diff] \subset \ID$.
  Furthermore, $\tilde{w}$ cannot start with~$\cum$ since~$\evl \cum = 0$. By our analysis of
  irreducible monomials, this leaves with the only remaining possibility~$\tilde{w} = \der^i$.
  Altogether this show that~$w \evl \tilde{w} = f \evl \der^i$. We may thus conclude that all three
  $\bfk$-modules on the right-hand side of~\eqref{eq:decomp-norm} are in fact left $\galg$-modules
  with the following generators: While~$\galg[\der]_{\Diff}$ is generated by~$\der^i \; (i \ge 0)$,
  and~$\galg[\cum]_{\RB} \setminus \galg$ by~$\cum f \; (f \in \galg)$, the normal forms
  in~$(\evl)_{\ID}$ are generated by~$\evl \der^i$.

  For establishing~\eqref{eq:decomp-norm}, it is sufficient to show that
  each~$U \in \galg[\der,\cum]_{\ID}$ splits uniquely as~$U = U_{\der} + U_{\varint} + U_{\evl}$,
  containing a part~$U_{\der} \in \galg[\der]_{\Diff}$, a
  part~$U_{\varint} \in \galg[\cum]_{\RB} \setminus \galg$, and finally a
  part~$U_{\evl} \in (\evl)_{\ID}$. Each irreducible monomial~$f \der^i$ of~$U$ is put
  into~$U_{\der}$, and each irreducible monomial~$f \cum g$ into~$U_{\varint}$. For irreducible
  monomials of the form~$f \cum \der^{i+1} = f \der^i - f \evl \der^i$, we put~$f \der^i$
  into~$U_{\der}$ and~$-f \evl \der^i$ into~$U_{\evl}$. Thus we
  have~$U = U_{\der} + U_{\varint} + U_{\evl}$; let us now prove uniqueness. Hence
  assume~$\sum_i a_i \der^i + \sum_i b_i \cum c_i + \sum_i d_i \evl \der^i = 0$, each sum having
  finitely many nonzero coefficients~$a_i, b_i, c_j, d_i \in \galg$. By the definition of~$\evl$,
  this is the same
  as~$\sum_i (a_i + d_i) \der^i + \sum_i b_i \cum c_i - \sum_i d_i \cum \der^{i+1} = 0$. By the
  definition of the free operator ring~$\galg[\der,\cum]$, all monomials are linearly independent
  over~$\bfk$, hence~$a_i + d_i = b_i = d_i = 0$ and then also~$a_i = 0$. This completes the
  uniqueness proof for splitting~$U$. We have now established the $\bfk$-module
  decomposition~\eqref{eq:decomp-norm} and therefore also~\eqref{eq:decomp}. Since $\galg[\der]$
  and~$\galg[\cum]$ are both closed under multiplication, they are subalgebras
  of~$\galg[\der,\cum]$.

  Finally, let us prove the quotient
  statement~$\galg[\der, \vcum \Mid \ID] \cong \galg[\der, \vcum \Mid \DRB]/I$, where for once we
  use the same symbol for the Rota-Baxter operator in~$\ID$ and~$\DRB$. (Recall that the notational
  distinction between~$\cum$ and~$\vcum$ is purely a convenience that allows us to suppress the laws
  to be factored out.) Writing out the definitions, we must thus prove
  \begin{equation*}
    \frac{\galg[\der, \vcum]}{[\ID]} \cong \frac{\galg[\der,\vcum]}{[\DRB]} \,\Big/\, [ \evl f -
    \evl(f) \, \evl \mid f \in \galg ] ,
  \end{equation*}
  which reduces to showing~$[\ID]/[\DRB] = [ \evl f - \evl(f) \, \evl \mid f \in \galg ]$ by the
  third isomorphism theorem. Hence it suffices to
  show~$[\ID] = [\DRB] \dotplus [ \evl f - \evl(f) \, \evl \mid f \in \galg ]$ as~$\bfk$-modules,
  where the directness of the sum is obvious. For the inclusion from left to right, we must show
  that every~$\cum f \der - \underline{f} + \cum \underline{f}' + \evl(\underline{f}) \, \evl$ is
  in~$[\ID'] := [\DRB] + [ \evl f - \evl(f) \, \evl \mid f \in \galg ]$.
  Substituting~$f + \lambda f'$ for~$f$, we may also show that
  every~$r_f := \cum (f + \lambda f') \, \der - f + \cum f' + \evl(f) \, \evl$ is in~$[\ID']$. But
  we have indeed
  \begin{equation*}
     r_f = \cum \Big( f \der + \lambda \, f' \der + f' - \der f \Big) - \Big( \evl f - \evl(f) \,
     \evl \Big) \in [\ID']
  \end{equation*}
  since the first summand is in~$\Diff \subset \DRB$ and the second
  in~$[\evl f - \evl(f) \, \evl \mid f \in \galg]$. For the inclusion from right to left, it
  suffices to show that every~$\evl f - \evl(f) \, \evl$ is in~$\ID$. But we have just proved
  that~$r_f + \evl f - \evl(f) \, \evl \in [\Diff] \subset [\ID]$. Since we have also~$r_f \in \ID$,
  the proof is completed.
\end{proof}

As the name suggests, there is another important aspect to $E$-operators that we should consider
here---they \emph{operate} on suitable domains. These domains are a special class of modules that we
shall now introduce. Recall first that an \emph{$\uo$-operated
  module}~$(M; p_\omega \Mid \omega \in \uo)$ over a commutative ring~$A$ is an $A$-module~$M$ with
$A$-linear operators~$p_\omega\colon M \to M$. As in the case of~$\uo$-operated algebras, no
restrictions are imposed on the operators~$p_\omega$. An operated
morphism~$\phi\colon (M; p_\omega \Mid \omega \in \uo) \to (M'; p_\omega' \Mid \omega \in \uo)$ is
an $A$-linear homomorphism~$\phi\colon M \to M'$ such
that~$\phi \circ p_\omega = p_\omega' \circ \phi$ for all~$\omega \in \uo$; the resulting category
of~$\uo$-operated modules over~$A$ is denoted by~$\Mod_A(\uo)$.

Now assume that~$A \in \CAlg(\uo)$ is an operated algebra. Then the
free operators~$T \in \opra$ \emph{act naturally} on the
$\uo$-operated $A$-module~$M$. Since~$T$ is a $\bfk$-linear
combination of noncommutative monomials~$t \in M(\uo \uplus A)$, it
suffices to define~$t \cdot m$ for~$m \in M$. By the universal
property of~$M(\uo \uplus A)$, we obtain a unique monoid action by
setting~$\omega \cdot m := p_\omega(m)$ for~$\omega \in \uo$ and~$a
\cdot m := am$ for~$a \in A$. Thus~$M$ becomes an~$\opra$-module, and
we can now introduce the module-theoretic analog of $E$-algebras.

\begin{defn}
  Fix an operated algebra~$A \in \CAlg(\uo)$ and a submodule~$E
  \subseteq \bfk\frakC_\Omega(Y)$ of standard laws. Then
  an~\emph{$E$-related module} over~$A$ is an operated module~$M \in
  \Mod_A(\uo)$ with~$L \cdot m = 0$ for all relators~$L \in [E]$
  and~$m \in M$.
\end{defn}

Again we will briefly speak of $E$-modules (since the context will make it clear that~$E$ is a set
of laws). They form a full subcategory of~$\Mod_A(\uo)$ denoted by $\Mod_A(\uo|E)$. The role of the
$E$-operator ring becomes clear now: Operators correspond to the natural action defined above if~$A$
is an $E$-algebra. This can be made precise by the following statement.

\begin{prop}
  Let~$A$ be an $E$-algebra for a submodule~$E \subseteq
  \bfk\frakC_\Omega(Y)$. Then we have the isomorphism of
  categories~$\Mod_A(\uo|E) \cong \Mod_{\eopra}$.
  \mlabel{prop:op-module}
\end{prop}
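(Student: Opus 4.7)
The plan is to construct explicit mutually inverse functors between~$\Mod_A(\uo|E)$ and~$\Mod_{\eopra}$.

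First I would build a functor~$F\colon \Mod_A(\uo|E) \to \Mod_{\eopra}$. Given an $E$-module~$M$, the paragraph preceding Definition~\ref{def:eopring} already equips~$M$ with a natural left $\opra$-module structure via the universal property of the free product: the $A$-action and the operators~$p_\omega$ assemble into a unique ring homomorphism~$\opra \to \End_{\bfk}(M)$. Because an $E$-module is by definition required to annihilate every relator~$L \in [E]$ (not merely some generating set), this homomorphism factors through~$\eopra = \opra/[E]$, turning~$M$ into an~$\eopra$-module. Functoriality on morphisms is automatic: an $A$-linear map commuting with each~$p_\omega$ necessarily commutes with the action of every monomial in the generators of~$\opra$, hence with every element of~$\eopra$.

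Next I would construct the reverse functor~$G\colon \Mod_{\eopra} \to \Mod_A(\uo|E)$. Given an $\eopra$-module~$N$, the canonical maps~$A \hookrightarrow \opra \to \eopra$ and~$\omega \mapsto \omega + [E]$ endow~$N$ with an $A$-module structure together with $\bfk$-linear operators~$p_\omega$, making it an object of~$\Mod_A(\uo)$; the vanishing of every relator in~$\eopra$ then forces~$N$ to be an $E$-module. Any $\eopra$-linear map is \emph{a fortiori} $A$-linear and intertwines the~$p_\omega$, so it is a morphism in~$\Mod_A(\uo|E)$.

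Finally I would check that~$FG = \mathrm{Id}$ and~$GF = \mathrm{Id}$. The equality~$GF = \mathrm{Id}$ is immediate because~$G$ simply extracts the data that~$F$ used as input. For~$FG = \mathrm{Id}$, the key observation is that an $\eopra$-action is determined by its restriction to the generators in~$A$ and~$\uo$, which is precisely what~$G$ records and~$F$ reassembles via the universal property of the free product. I expect the whole argument to amount to a bookkeeping exercise; the main subtlety to watch is the distinction between an ideal and its generating set, but this is already absorbed into the definition of an $E$-module, which imposes vanishing on the full ideal~$[E]$ rather than only on a chosen set of relators.
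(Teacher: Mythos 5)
Your proposal is correct and follows essentially the same route as the paper: both directions pass through the natural $\opra$-action, use the fact that an $E$-module kills the full relator ideal $[E]$ so the action descends to $\eopra$, and verify the morphism correspondence by reducing to monomials in $M(\uo \uplus A)$ (the paper makes your ``automatic'' functoriality claim precise by an explicit induction on the degree of the monomial, splitting off a factor from $A$ or from $\uo$ at each step). No gaps.
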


\begin{proof}
  As noted above, an $E$-module~$M \in \Mod_A(\uo|E) \subseteq
  \Mod_A(\uo)$ can also be viewed as an~$\opra$-module under the
  natural action, and as such it satisfies~$[E] \cdot M = 0$. But then
  the action of~$\eopra$ with~$(T + [E]) \cdot m := T \cdot m$ is
  well-defined and gives~$M$ the structure of
  an~$\eopra$-module. Conversely, every such module restricts to an
  operated module~$M \in \Mod_A(\uo)$ with~$[E] \cdot M = 0$.

  Of course, every morphism of~$\Mod_{\eopra}$ is also a morphism
  of~$\Mod_A(\uo|E)$. For the other direction, let~$\phi$ be a
  morphism of $E$-modules. For showing~$\phi\big( (T+[E]) \cdot m
  \big) = \big( T + [E] \big) \cdot \phi(m)$ for~$T \in \opra$ and $m
  \in M$, it suffices to show~$\phi(T \cdot m) = T \cdot
  \phi(m)$. Since~$\phi$ is $\bfk$-linear, we may assume a monomial~$T
  \in M(\uo \uplus A)$ and use induction on the degree of~$T$. The
  base case~$T = 1$ is trivial, hence assume the claim for monomials
  of degree~$n$ and let~$T$ have degree~$n+1$. Then there exists~$T'
  \in M(\uo \uplus A)$ of degree~$n$ such that either~$T = a T'$
  for~$a \in A$ or~$T = \omega T'$ with~$\omega \in \uo$. In the
  former case the claim follows because~$\phi$ is~$A$-linear, in the
  latter case because it is a morphism of~$\uo$-operated modules. This
  completes the proof that~$\phi$ is also a morphism
  of~$\Mod_{\eopra}$.
\end{proof}

\begin{fact}
  \label{ft:standard-Emodules}
  Some standard constructions for creating new modules also work in the operated setting. Let us
  mention a few that are also relevant for the examples to be given afterwards.
  \begin{enumerate}
  \item\label{it:freemod} If~$A$ is an $E$-algebra and~$S$ an arbitrary set, the \emph{free
      module}~$A^S$ is an $E$-module. The action of~$p_\omega \; (\omega \in \Omega)$ on a module
    element~$f \in A^S$ is defined by~$(p_\omega f)(s) := P_\omega(fs)$ for~$s \in S$. It is easy to
    see that for any free operator~$L \in A[\Omega]$ and~$f \in A^S$ one
    has~$(L \cdot f)(s) = L \cdot f(s)$ for all~$s \in S$, where the left action takes place
    in~$A^S$ and the right action in~$A$. Hence one obtains~$L \cdot f = 0$ for all
    relators~$L \in [E] \subset A[\Omega]$ and all~$f \in A^S$, which confirms that~$A^S$ is an
    $E$-module. Note that~$A^S$ is free as an $A$-module but generally not as an $\eopra$-module
    (see Example~\ref{ex:vmod}\ref{it:diffmod} below).
  \item\label{it:dirprod} If~$M_1, \dots, M_k$ are $E$-modules over~$A$, their direct
    product~$M_1 \times \cdots \times M_k$ is an $E$-module with
    operators~$p_\omega \; (\omega \in \Omega)$ acting component-wise. If~$M_1 = \cdots = M_k = M$,
    this gives the free module~$M^S$ over the finite set~$S = \{1, \dots, k\}$.
  \item\label{it:dualmod} Whenever~$M$ is an $E$-module over~$A$, the \emph{dual module}~$M^*$ is
    naturally an $E^*$-module with operators~$p_\omega^* \; (\omega \in \Omega)$;
    here~$p_\omega^*\colon M^* \to M^*$ is defined as the dual map of the $A$-linear
    map~$p_\omega\colon M \to M$. If~$L \in A[\Omega]$ is any free operator and~$f \in M^*$ one
    checks immediately that~$(L^* \cdot f)(m) = f(L \cdot m)$ for all~$m \in M$. In other words, the
    action on~$M^*$ is the dual of the action on~$M$. In particular, one sees that~$L^* \cdot f = 0$
    for all relators~$L \in [E] \subset A[\Omega]$ and all~$f \in M^*$, confirming that~$M^*$ is
    $E^*$-related (meaning it satisfies the transpose of all relators induced by~$E$). Since
    any~$E$-algebra~$A$ is also an $E$-module over itself, $A^*$ is also an $E^*$-module.
  \item\label{it:submod} If~$M$ is an $E$-module with a submodule~$M' \subseteq M$ that is closed
    under all all operators~$p_\omega \; (\omega \in \Omega)$, their restrictions to the
    submodule~$M'$ make the latter into an $E$-module or, more precisely, an \emph{$E$-related
      submodule} of~$M$.
  \end{enumerate}
\end{fact}

\begin{exam}
  \label{ex:vmod}
  Let us now exemplify the concept of $E$-module for the \emph{four
    standard varieties} given in Example~\ref{ex:lin-var},
  corresponding to the four operator rings of
  Proposition~\ref{prop:lin-op-rings}. We make again use of the
  convention stated in Remark~\ref{rem:suppress-var}.
  \begin{enumerate}
  \item\label{it:diffmod} The~$\Diff$-modules are commonly known as \emph{differential
      modules}~\mcite[Def.~1.2.4(iii)]{Singer2009}, usually taken with weight~$\lambda=0$ over a
    differential field~$(\galg, \der)$. Their equivalent formulation as~$\galg[\der]$-modules is
    often used as an alternative definition~\mcite[Def.~2.5]{PutSinger2003}. Differential modules
    are crucial for differential Galois theory as they provide an abstract way of formulating linear
    differential equations. In the important special case when the underlying differential ring
    is~$\galg = \bfk[x]$, the operator ring is the Weyl algebra~$A_1(\bfk)$, and the corresponding
    $A_1(\bfk)$-modules are known as $\mathcal{D}$-modules~\mcite{Coutinho1995}
    since~$\mathcal{D} := A_1(\bfk) = \bfk[x][\der]$ is the underlying differential operator ring.
    For example, $\bfk[x]^n$ is a differential module by
    Fact~\ref{ft:standard-Emodules}\ref{it:dirprod}. If~$\bfk$ is a field, any
    $\bfk$-basis~$e_1, \dots, e_n$ of~$\bfk^n$ is of course a $\bfk[x]$-basis for~$\bfk[x]^n$ but
    since~$\der e_1, \dots, \der e_n = 0$ it is not an $A_1(\bfk)$-basis. In other words,
    $\bfk[x]^n$ is free as a $\bfk[x]$-module but not as a $\bfk[x][\der]$-module.

    Another important class of examples with~$\bfk = \RR$ is concerned with \emph{vector fields} on
    a manifold~$M$. In detail, each vector field~$V \in \mathfrak{X}(M)$ induces a covariant
    derivative~$\nabla_V\colon \mathfrak{X}(M) \to \mathfrak{X}(M)$ with characteristic
    property~$\nabla_V(f W) = f' W + f \, \nabla_V(W)$ for~$f \in C^\infty(M)$
    and~$W \in \mathfrak{X}(M)$. The vector fields~$\mathfrak{X}(M)$ thus form a differential module
    over the differential algebra~$C^\infty(M)$.
  \item\label{it:rotmod} The category of~$\RB$-modules has been
    introduced in~\mcite[Def.~2.1(a)]{GaoGuoLin2015} under the name
    of~\emph{Rota-Baxter module} for a given Rota-Baxter
    algebra~$(\galg, \cum)$. Their equivalent description in terms
    of~$\galg[\cum]$-modules is elaborated
    in~\mcite[\S2.2]{GaoGuoLin2015}.
  \item\label{it:dist} Similarly, we introduce now the category of~$\DRB$-modules, which we may also
    call \emph{differential Rota-Baxter modules}. In $\mathcal{D}$-module theory\footnote{The two
      occurrences of~$\mathcal{D}$ in ``$\mathcal{D}$-modules'' and in the distribution
      space~$\mathcal{D}'(\RR)$ are unrelated. In fact, $\mathcal{D}'(\RR)$ is the dual of the
      differentiable class~$\mathcal{D}(\RR) := C^\infty_0(\RR)$ of smooth functions with compact
      support.}, it is often pointed out that various spaces of (real or complex valued)
    distributions are differential modules (for weight~$\lambda = 0$ and ground field~$\bfk = \RR$
    or~$\bfk = \CC$) and hence~$\mathcal{D}$-modules since distributions can be multiplied by smooth
    functions so they are in particular modules over~$\galg := \bfk[x]$. It is seldom appreciated
    that some of these distribution spaces are in fact differential Rota-Baxter modules over~$\galg$
    and hence $\galg[\der,\vcum]$-modules. For example,
    let~$\mathcal{D}'(\RR)_+ \subset \mathcal{D}'(\RR)$ be the space of all distributions~$T$ with
    \emph{left}-bounded support, meaning~$\supp(T) \subseteq [a, \infty[$ for some~$a \in \RR$.
    Analogously, we write~$\mathcal{D}(\RR)_-$ for the space of test functions with
    \emph{right}-bounded support; it is clear that this is a (non-unital!) differential Rota-Baxter
    algebra with standard derivation~$\der$ and Rota-Baxter operator~$\vcum := -\cum_{\!x}^\infty$.
    In fact, $\mathcal{D}(\RR)_-$ is a degenerate (nonunital) integro-differential algebra since the
    induced evaluation~$\evl := 1_{\mathcal{D}(\RR)_+} - \vcum \circ \der = 0$ is trivially
    multiplicative. In other words, $\der$ is bijective with~$\vcum$ as its inverse, and the strong
    Rota-Baxter axiom~$f \vcum g = \vcum f' \vcum g + \vcum fg$ immediately follows
    from~$f' \vcum g = (f \vcum g)' - fg$. If~$H \in \mathcal{D}'(\RR)_+$ is the Heaviside function,
    the operator~$\vcum\colon \mathcal{D}'(\RR)_+ \to \mathcal{D}'(\RR)_+$ defined by the
    convolution~$\vcum T := H \star T$ is known to be a two-sided
    inverse~\cite[\S13.1]{DuistermaatKolk2010} of the distributional derivative~$\der$. One checks
    that~$\vcum\colon \mathcal{D}'(\RR)_+ \to \mathcal{D}'(\RR)_+$ is the transpose of
    $\vcum\colon \mathcal{D}(\RR)_- \to \mathcal{D}(\RR)_-$, just as the distributional
    derivative~$\der\colon \mathcal{D}'(\RR)_+ \to \mathcal{D}'(\RR)_+$ is (by definition) the
    transpose of the standard derivation~$\der\colon \mathcal{D}(\RR)_- \to \mathcal{D}(\RR)_-$.
    Thus we obtain a Rota-Baxter module~$(\mathcal{D}'(\RR)_+, \der, \vcum)$, which is actually a
    degenerate integro-differential module over the nonunital Rota-Baxter
    algebra~$\mathcal{D}(\RR)_-$. Of course, one may apply a similar construction to endow the
    space~$\mathcal{D}'(\RR)_-$ of \emph{right}-bounded distributions with the structure of a
    differential Rota-Baxter module over the nonunital differential Rota-Baxter algebra of
    \emph{left}-bounded test functions.
  \item\label{it:intdiffmod} Finally, let us consider the category of~$\ID$-modules, which we also
    call \emph{integro-differential modules}. Again we shall give an important example from
    distribution theory. Endowing~$\mathcal{E}(\RR) := C^\infty(\RR)$ with the usual
    derivation~$\der = d/dx$ and the Rota-Baxter operator~$\vcum f := \cum_{\!0}^x f(\xi) \, d\xi$
    yields a ``dually integro-differential'' module $\mathcal{E}^*(\RR)$ by
    Fact~\ref{ft:standard-Emodules}\ref{it:dualmod}, as the dual of the integro-differential
    algebra~$(\mathcal{E}(\RR), \der, \vcum)$. Just as the one-sided distribution spaces of
    Item~\ref{it:dist}, this is in fact a differential module as well as a Rota-Baxter module since
    the relators~$\der f \to f \der + f'$
    and~$\vcum f \vcum \to f\backquote \vcum - \vcum f\backquote$ are skew-symmetric under
    transposition. Hence we should take the \emph{negated} transposes
    of~$\der, \cum\colon \mathcal{E}(\RR) \to \mathcal{E}(\RR)$; this is of course standard practice
    in defining the distributional derivative~\cite[\S4]{DuistermaatKolk2010}. As the two signs
    cancel, we obtain the \emph{transposed section law} $\vcum \circ \der = 1_{\mathcal{E}'(\RR)}$.

    One checks that both~$\der$ and~$\vcum$ restrict to the topological
    dual~$\mathcal{E}'(\RR) \subset \mathcal{E}^*(\RR)$ consisting of all continuous
    functionals~$\mathcal{E}(\RR) \to \bfk$, relative to the well-known locally convex topology
    of~$\mathcal{E}(\RR)$; see for example~\cite[(7.8)]{Szmydt1977}. Therefore~$\mathcal{E}'(\RR)$
    is a differential Rota-Baxter submodule of~$\mathcal{E}^*(\RR)$ by
    Fact~\ref{ft:standard-Emodules}\ref{it:submod}, except that the section law is transposed. In
    analysis, $\mathcal{E}'(\RR)$ is known as the space of \emph{compactly supported
      distributions}. It may seem surprising that~$\der$ is injective and~$\vcum$ surjective
    on~$\mathcal{E}'(\RR)$. In fact, one checks that~$\ker{\vcum} = \RR \delta_0$
    and~$\im(\der) = \{ T \in \mathcal{E}'(\RR) \mid T(1) = 0 \}$. It is
    known~\cite[(10.4)]{Szmydt1977} that in~$\mathcal{D}'(\RR) \supset \mathcal{E}'(\RR)$, the
    kernel of~$\der$ is given by the constant distributions; but since their support is~$\RR$, they
    are not in~$\mathcal{E}'(\RR)$. Conversely, the image of~$\der$ on~$\mathcal{D}'(\RR)$ is
    full~\cite[Cor.~\S11.2]{Szmydt1977} since the constant function~$1$ is not compactly supported
    so the condition~$T(1) = 0$ is void. As we have seen, $\der$ is not surjective
    on~$\mathcal{E}'(\RR)$, which is of course well-known~\cite[Ex.~11.2]{Szmydt1977}.

    Is~$(\mathcal{E}'(\RR), \der, \vcum)$ an integro-differential module (with transposed section
    law)? One must check if the (transposed) induced
    evaluation~$\evl := 1_{\mathcal{E}'(\RR)} - \der \circ \vcum\colon \mathcal{E}'(\RR) \to
    \mathcal{E}'(\RR)$
    is multiplicative in the sense that~$\evl(f T) = \evl(f) \, \evl(T)$ for
    all~$T \in \mathcal{E}'(\RR)$ and~$f \in \mathcal{E}(\RR)$.
    Since~$\evl\colon \mathcal{E}'(\RR) \to \mathcal{E}'(\RR)$ is the transpose
    of~$\evl\colon \mathcal{E}(\RR) \to \mathcal{E}(\RR)$, one has~$\evl(T) = T(1) \, \delta_0$ for
    any~$T \in \mathcal{E}'(\RR)$. But then one sees that
    $$\evl(e^x \, \delta_1) = \delta_1(e^x) \cdot \delta_0 = e \cdot \delta_0 \neq \evl(e^x) \,
    \evl(\delta_1) = 1 \cdot \delta_0,$$
    which shows that~$\mathcal{E}'(\RR)$ is in fact \emph{not} an integro-differential module. The
    problem is that the corresponding relator~$\evl f \to \evl(f) \, \evl$ gets transposed
    to~$f \evl \to \evl(f) \, \evl$, which yields the true identity~$f \, \evl(T) = f(0) \, \evl(T)$
    or~$f \, T(1) \, \delta_0 = f(0) \, T(1) \, \delta_0$. If~$T(1) = 0$, this is trivially valid;
    otherwise division by~$T(1)$ yields the familiar \emph{sifting property} of the Dirac
    distribution~\cite[p.~38]{DuistermaatKolk2010}. Of course we may replace~$\delta_0$
    by~$\delta_c$ for any~$c \in \RR$ if we use the Rota-Baxter operator~$\cum_{\!a}^x$
    on~$\mathcal{E}(\RR)$ instead of~$\cum_{\!0}^x$.
  \end{enumerate}
\end{exam}

For seeing an honest integro-differential module, we refer to~\cite{RosenkranzSerwa2017}, where the
algebraic \emph{distribution module}~$(\mathcal{D}\galg, \sder, \scum)$ over a given ordinary
shifted integro-differential algebra~$\galg$, such as the classical example~$\galg = C^\infty(\RR)$,
is constructed and investigated. This provides a purely algebraic structure (involving no topology,
in particular taking~$\galg$ only as an integro-differential algebra with shift maps such
as~$f(x) \mapsto f(x-c)$ for~$c \in \RR$ in the classical example), providing just piecewise
functions and Dirac distributions on top of~$\galg$. In the classical example, this gives rise to
the Heaviside function~$H_a = H(x-a)$ and their derivatives~$\delta_a$. Compared to the analytic
distribution spaces of Example~\ref{ex:vmod} \ref{it:dist}, \ref{it:intdiffmod}, this is a very
small module. However, it contains exactly what is needed for specifying and computing the
\emph{Green's operator}~$G \in \galg[\der, \cum]$ of a LODE boundary
problem~\cite[\S\S2,3]{Stakgold1979}. Acting as~$G\colon \galg \to \galg$, it can be assigned a
\emph{Green's function} $g(x,\xi)$. This is a (bivariate) function involving Heavisides and---for
ill-posed problems--- also Diracs, characterized by a distributional differential equation. For a
comprehensive presentation, we refer the reader to~\cite{RosenkranzSerwa2017}, specifically
Theorems~26 and~29 therein. The actual computation of the Green's operator~$G$ on the basis of a
given fundamental system is detailed in~\cite{RosenkranzRegensburger2008a}, the extraction of the
Green's function~$g(x,\xi)$ from~$G$ in~\cite{RosenkranzSerwa2015}.

% =============================================================================
\section{Differential Rota-Baxter Operators}
\label{sec:drbo}
% =============================================================================

As pointed out earlier, the operator rings in Proposition~\ref{prop:lin-op-rings}\ref{it:diffop},
\ref{it:intop}, \ref{it:intdiffop} are known and defined elsewhere, but the ring
in~\ref{it:diffrbop} is introduced here for the first time. In the rest of this paper, we will
therefore concentrate on the ring~$\galg[\der,\vcum]$. As a first step, let us analyze its canonical
forms, in a way similar to~\mcite[Prop.~25]{RosenkranzRegensburgerTecBuchberger2012} and the above
$\bfk$-module decomposition for~$\galg[\der,\cum]$. In the following, recall that
$\galg[\vcum] \!\setminus\!  \galg$ denotes a linear complement rather than the set-theoretic one.

\begin{lemma}
  \label{lem:drbo-decomp}
  Let~$(\galg, \der, \vcum) \in \DRB$. Then we have~$\galg[\der,\vcum]
  = \galg[\der] \dotplus \galg[\vcum] \!\setminus\! \galg \dotplus
  [\evl]$, where~$[\evl] := \bfk \{ f \vcum g \, \der^k \mid f, g \in
  \galg; \: k > 0 \}$ is a rung that we call the \emph{evaluation
    rung}.
\end{lemma}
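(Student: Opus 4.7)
The plan is to exploit the Gr\"obner basis $\grb(\DRB) = \grb(\Diff) \cup \grb(\RB) \cup \{\der \vcum \to 1\}$ from Proposition~\ref{prop:lin-op-rings}\ref{it:diffrbop}: by Bergman's Diamond Lemma, a $\bfk$-basis of $\galg[\der,\vcum]$ consists of the monomials built from (a $\bfk$-basis of) $\galg$ and $\{\der, \vcum\}$ that are irreducible under the three rewrite rules
\begin{align*}
\der f &\to f\der + \lambda f'\der + f', \\
\vcum f \vcum &\to f\backquote \vcum - \vcum f\backquote - \lambda \vcum f, \\
\der \vcum &\to 1
\end{align*}
for $f \in \galg$. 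The core of the argument is therefore to classify these irreducible monomials.

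Since $\der f$ and $\der \vcum$ are both forbidden subwords, every occurrence of $\der$ must be followed by another $\der$ or else terminate the word, so all $\der$'s collect into a single terminal block $\der^k$ with $k \geq 0$. Since $\vcum f \vcum$ is forbidden for every $f \in \galg$ (including $f = 1$), and since no $\der$ can intervene between two $\vcum$'s (the $\der$'s are all at the end), at most one $\vcum$ can occur. Hence each irreducible monomial has the shape $f_0 \vcum^\epsilon f_1 \der^k$ with $f_0, f_1 \in \galg$, $\epsilon \in \{0, 1\}$, $k \geq 0$. The case $\epsilon = 0$ absorbs $f_1$ into $f_0$ and yields a monomial of $\galg[\der]$; the case $\epsilon = 1, k = 0$ yields $\galg[\vcum] \!\setminus\! \galg$; the case $\epsilon = 1, k > 0$ yields $[\evl]$. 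Since these three families of monomials are manifestly disjoint, their $\bfk$-spans intersect trivially, producing the announced decomposition $\galg[\der,\vcum] = \galg[\der] \dotplus \galg[\vcum] \!\setminus\! \galg \dotplus [\evl]$.

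It then remains to verify that $[\evl]$ is a rung. The natural approach is a direct reduction of a product $(f \vcum g \der^k)(f' \vcum g' \der^{k'})$ with $k, k' > 0$: first push $\der^k$ past $f'$ via the Leibniz rewrite to obtain a $\bfk$-linear combination of terms $f \vcum g h_i \der^{m_i} \vcum g' \der^{k'}$ with $0 \leq m_i \leq k$; then simplify each inner $\der^{m_i} \vcum$ by repeated application of $\der \vcum \to 1$ when $m_i \geq 1$, or by the Rota-Baxter rule when $m_i = 0$. In either subcase the outcome is a sum of monomials of the form $\tilde f \vcum \tilde g \der^{k'}$ with $k' > 0$, hence in $[\evl]$. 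Because $1 \notin [\evl]$, this closure under multiplication makes $[\evl]$ a genuine rung. The main obstacle is the bookkeeping of this last step, but no new idea is required beyond the rewrite rules themselves together with the classification of canonical forms already established.
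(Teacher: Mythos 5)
Your proof is correct and follows essentially the same route as the paper: both arguments rest on the rewrite system $\grb(\DRB)$ from Proposition~\ref{prop:lin-op-rings}\ref{it:diffrbop} to classify the canonical monomials into the three families $f\der^k$, $f\vcum g$ and $f\vcum g\,\der^k$ ($k>0$), and then verify multiplicative closure of $[\evl]$ by reducing a product of two such monomials back to that shape. The only cosmetic differences are that you make the appeal to the Diamond Lemma (hence linear independence of the irreducible words, giving directness of the sum) explicit where the paper leaves it implicit, and that you organize the closure computation as a single reduction pass where the paper runs an induction on the exponent $k$.
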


\begin{proof}
  The proof of the direct sum is completely analogous to that of the corresponding statement in
  Proposition~\ref{prop:lin-op-rings}, with~\eqref{eq:decomp-norm} being replaced by
  \begin{equation}
    \label{eq:decomp-drb}
    \galg[\der,\vcum]_{\DRB} = \galg[\der]_{\Diff} \dotplus \galg[\vcum]_{\RB} \setminus \galg
    \dotplus [\evl]_{\DRB} .
  \end{equation}
  The analysis of irreducible monomials~$w \in \galg[\der,\vcum]_{\DRB}$ is also the same, except
  that the remaining case~$w = f \vcum g \der^k$ with~$f, g \in \galg$ and~$k \ge 0$ cannot be
  reduced any further. We have of course~$w \in [\evl]$ if~$k > 0$ and~$w \in \galg[\vcum]_{\RB}$
  otherwise. The direct sum~\eqref{eq:decomp-drb} now follows immediately since the evaluation
  rung~$[\evl]$, unlike the evaluation ideal~$(\evl)$, is generated by irreducible monomials.

  It remains to prove that~$[\evl]$ is multiplicatively closed,
  meaning~$(f \vcum g \, \der^k)(\tilde{f} \vcum \tilde{g} \,
  \der^{\tilde{k}}) \in [\evl]$. It suffices to ensure~$w_k := \vcum g
  \, \der^k \tilde{f} \vcum \tilde{g} \, \der \in [\evl]$, and for
  that we use induction over~$k>0$. For~$k=1$ we have~$\der \tilde{f}
  = \tilde{f}'+ \tilde{f} \der + \lambda \, \tilde{f}' \der$ and~$w_k
  = \vcum \tilde{f}' g \vcum \tilde{g} \, \der + \vcum \tilde{f} g
  \tilde{g} \, \der + \lambda \, \vcum \tilde{f}' g \tilde{g} \der \in
  [\evl]$ after applying the Rota-Baxter rule in the first
  summand. For the induction step we consider~$w_{k+1}$, assuming the
  claim holds for~$k$. We obtain
  \begin{equation*}
    w_{k+1} = \vcum g \der^k \tilde{f}' \vcum \tilde{g} \, \der +
    \vcum g \der^k \tilde{f} \tilde{g} \der + \lambda \, \vcum g
    \der^k \tilde{f}' \tilde{g} \der,
  \end{equation*}
  where the first summand is contained in~$[\evl]$ by the induction
  hypothesis and the second expands into a linear combination of terms
  having the shape~$\tilde{w}_k \der^l \: (1 \le l \le k+1)$, which
  are clearly contained in~$[\evl]$ as well.
\end{proof}

Note that both~$\galg[\vcum]_+ := \galg[\vcum] \!\setminus\! \galg$
and~$\galg[\der]_+ := \galg[\der] \!\setminus\! \galg$ are rungs,
which feature in the alternative $\bfk$-module
decomposition~$$\galg[\der,\vcum] = \galg \dotplus \galg[\der]_+
\dotplus \galg[\vcum]_+ \dotplus [\evl].$$ Moreover, one checks
immediately that~$[\evl]$ is actually an~$(\galg[\vcum]_+,
\galg[\der]_+)$-bimodule. According to the subsequent lemma, the
evaluation rung is also closely related to the evaluation (hence its
name). Note that we continue to call the projector~$\evl := 1_\galg -
\vcum \der$ the \emph{evaluation} of the differential Rota-Baxter
algebra~$(\galg, \der, \vcum)$ although it is not multiplicative
(unless~$\galg$ is in fact an integro-differential algebra). However,
it is still a projector onto~$\ker{\der}$ along~$\im{\cum}$. By abuse
of language, the corresponding~$\evl \in \galg[\der,\vcum]$ will also
be referred to as evaluation.

\begin{lemma}
  The evaluation rung~$[\evl]$ is a bimodule over~$\bfk[\evl]$, with
  $\evl$ as right annihilator.
\end{lemma}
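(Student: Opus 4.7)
The plan is to verify the right annihilator property by direct computation, and then use it together with the idempotency of $\evl$ to deduce the bimodule structure.

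First, I would establish the easy identity $\der^k \vcum = \der^{k-1}$ for $k \geq 1$, which follows by induction from the section axiom $\der\vcum \to 1$ in $\grb(\DRB)$. Using this, for any generator $f \vcum g \, \der^k$ of $[\evl]$ with $k > 0$, I compute
\[
  (f \vcum g \, \der^k) \cdot \evl
  = f \vcum g \, \der^k - f \vcum g \, \der^k \vcum \der
  = f \vcum g \, \der^k - f \vcum g \, \der^{k-1} \der = 0,
\]
which proves that $\evl$ is a right annihilator of $[\evl]$. As a consequence, $[\evl] \cdot \bfk[\evl] = [\evl] \cdot \bfk \subseteq [\evl]$, so the right bimodule action is automatic.

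For the left action, I would first observe that $\evl$ is idempotent: a short calculation gives $\evl^2 = 1 - 2\vcum\der + \vcum\der\vcum\der = 1 - \vcum\der = \evl$ using $\der\vcum = 1$. Therefore $\bfk[\evl] = \bfk \cdot 1 + \bfk \cdot \evl$ as a $\bfk$-module, and it suffices to check $\evl \cdot [\evl] \subseteq [\evl]$. For a generator $f \vcum g \, \der^k$, I expand
\[
  \evl \cdot f \vcum g \, \der^k = f \vcum g \, \der^k - \vcum \der f \vcum g \, \der^k,
\]
and apply the Leibniz rule $\der f \to f' + f\der + \lambda f' \der$, followed by $\der\vcum \to 1$, to rewrite $\vcum \der f \vcum g \, \der^k$ as a sum of three terms, one of which is $\vcum f' \vcum g \, \der^k$. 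For this remaining term I invoke the Rota-Baxter rule from $\grb(\RB)$ to replace $\vcum f' \vcum$ by $(f')\backquote \vcum - \vcum (f')\backquote - \lambda\, \vcum f'$. Every summand so produced has the form $h \vcum \ell \, \der^k$ with $k > 0$, hence lies in $[\evl]$.

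The main (and only) point requiring care is making sure all canonical rewrites land back in the spanning set $\{f \vcum g \, \der^k \mid k > 0\}$; since both the section axiom and the Rota-Baxter axiom preserve this shape (leaving~$\der^k$ intact on the right while reducing the number of $\vcum$'s in front), this is essentially bookkeeping. Putting the two inclusions together, $[\evl]$ is closed under left multiplication by $\bfk[\evl]$ and under right multiplication by $\bfk[\evl]$ (trivially, by the annihilator property), so $[\evl]$ is a $\bfk[\evl]$-bimodule with $\evl$ as right annihilator.
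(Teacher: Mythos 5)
Your proof is correct and follows essentially the same route as the paper's: idempotency of $\evl$ reduces $\bfk[\evl]$ to the span of $1$ and $\evl$, right annihilation comes from the trailing $\der^k$ via the section axiom, and left closure is obtained by expanding $\evl\, f \vcum g\,\der^k$ with the Leibniz and Rota-Baxter rules. The paper merely packages your left-action computation more compactly as $\evl\, f \vcum g\,\der^k = [\evl(f),\vcum]\, g\,\der^k$, which is exactly the sum of terms you produce.
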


\begin{proof}
  Since~$\evl^2 = \evl$, the ring~$\bfk[\evl]$ is the~$\bfk$-span
  of~$1$ and~$\evl$. Therefore it suffices to verify the
  inclusion~$\evl \, [\evl] \subseteq [\evl]$ and~$[\evl] \, \evl =
  0$. The latter is immediate from the definition of~$[\evl]$, the
  former follows from~$\evl \, f \vcum g \, \der^k = [ \evl(f), \vcum]
  \, g \der^k \in [\evl]$ via the Rota-Baxter axiom; here the bracket
  denotes the commutator in~$\galg[\der,\vcum]$.
\end{proof}

Before we study further properties of \emph{differential Rota-Baxter
  algebras} and their operator rings, let us give two simple examples
(the weight is zero for both).

\begin{exam}
  \label{ex:pol}
  Let~$\bfk$ have characteristic zero. The most basic example of a
  differential Rota-Baxter algebra is clearly the polynomial
  ring~$\bfk[x]$, with standard derivation~$\der = d/dx$ and
  Rota-Baxter operator~$\cum = \cum_0^x$ or more generally~$\cum_a^x$
  for any initialization point~$a \in \bfk$. Here we think
  of~$\cum_a^x\colon \bfk[x] \to \bfk[x]$ in purely algebraic terms,
  as the~$\bfk$-linear map defined by~$x^k \mapsto
  (x^{k+1}-a^{k+1})/(k+1)$. This example will play a great role in
  Section~\ref{sec:intdiff-weyl} although it is not a genuine example
  (in the sense that it is also an integro-differential algebra).
\end{exam}

\begin{exam}
  \label{ex:pcw-cont}
  For seeing a natural example of a differential Rota-Baxter algebra
  that is not an integro-differential algebra, we call on analysis. Of
  course, the primordial example of an integro-differential algebra
  consists of the (real or complex valued) \emph{smooth
    functions}~$C^\infty(\RR)$ or~$C^\infty[a,b]$;
  see~\mcite[Ex.~5]{RosenkranzRegensburger2008a}. Here~$\der$
  and~$\cum = \cum_\xi^x \; (\xi \in \RR$ or~$\xi \in [a,b]$) are
  defined analytically.

  A slight variation of this example leads to a differential
  Rota-Baxter algebra, namely the (real or complex valued)
  \emph{piecewise smooth functions}~$PC^\infty(\RR)$
  or~$PC^\infty[a,b]$. For example, we take all functions that are
  smooth on the whole domain minus finitely many points. The
  operations are defined as before except that~$\der f$ and~$\cum f$
  is undefined at the points where~$f$ is so. (The ring operations $+,
  -, *$ have to be defined carefully since singularities may cancel;
  the result is always to be taken with all removable singularities
  actually removed. This process is also well-known in complex
  analysis where meromorphic functions can be defined in a similar
  way.)

  The piecewise smooth functions are clearly a \emph{differential Rota-Baxter algebra}. However,
  they are not an integro-differential algebra for if they were, the evaluation~$1-\der\cum$ would
  be multiplicative---which it cannot be for functions undefined on the initialization
  point~$\xi$. For a more explicit example, let us take~$PC^\infty[0,1]$ with initialization
  point~$\xi = 0$. The Heaviside function~$h(x):=H(x-1/2) \in PC^\infty[0,1]$ is the characteristic
  function of the subinterval~$[1/2,1]$, and we have~$\cum h = \smash{\cum_{\!1/2}^x} \, dx = x-1/2$
  but~$h \cdot \cum 1 = H(x-1/2) \, x$. This means we have~$\cum (h \cdot 1) \neq h \cdot \cum 1$
  although~$h \in \ker{\der}$, and~\mcite[Rem.~2.6(c)]{GuoRegensburgerRosenkranz2012} shows
  that~$(PC^\infty[0,1], \der, \cum)$ is not an integro-differential algebra.
\end{exam}

In Proposition~\ref{prop:lin-op-rings} the \emph{relation between the
  operator rings}~$\galg[\der,\vcum]$ and~$\galg[\der,\cum]$ is
illuminated in one direction only: It shows the differential
Rota-Baxter operators~$\galg[\der,\vcum]$ to have a finer structure
from which one obtains the integro-differential operator
ring~$\galg[\der,\cum]$ as a quotient. However, we shall see below
(Proposition~\ref{thm:gen-isom}) that the finer
ring~$\galg[\der,\vcum]$ can also be embedded into a suitably
``generic'' integro-differential operator ring. Applying this to the
special case of polynomial coefficients will enable us to give an
operator-theoretic interpretation to the integro-differential Weyl
algebra (Section~\ref{sec:intdiff-weyl}).

As a preparation to this construction, let us first determine the
\emph{free integro-differential algebra}~$(\tilde\galg, \der, \vcum)$
over a given differential Rota-Baxter algebra~$(\galg, \der,
\cum)$. In other words, we want to ``extend''~$\galg$ just enough to
build an integro-differential structure. Categorically speaking, the
association~$\galg \mapsto \tilde\galg$ is the left adjoint of the
forgetful functor~$\ID \to \DRB$. However, note that~$\cum$ is not an
extension of~$\vcum$.

\begin{prop}
  \label{prop:free-intdiffalg}
  Given~$(\galg, \der, \vcum) \in \DRB$, construct~$\tilde{\galg} =
  \galg \otimes_K \galg$ over~$K := \ker{\der}$, extending the
  derivation to~$\der\colon \tilde{\galg} \to \tilde{\galg}$, $f
  \otimes f_\epsilon \mapsto (\der f) \otimes f_\epsilon$ and
  defining~$\cum\colon \tilde{\galg} \to \tilde{\galg}$ via $\cum \, f
  \otimes f_\epsilon := (\vcum f) \otimes f_\epsilon - 1 \otimes
  (f_\epsilon \vcum f)$. Then one obtains~$(\tilde{\galg}, \der, \cum)
  \in \ID$ with evaluation $\evl(f \otimes f_\epsilon) = 1 \otimes f
  f_\epsilon$, and an embedding~$\iota\colon \galg \to \tilde\galg, f
  \mapsto f \otimes 1$ of differential algebras.
\end{prop}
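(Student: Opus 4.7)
The plan is to verify the statement in four stages: well-definedness of the tensor construction and the two operators, the integro-differential axioms, the closed form for the evaluation, and the embedding property. The underlying observation is that $K = \ker \der$ is a $\bfk$-subalgebra of $\galg$ (closure under multiplication from Leibniz, and $1 \in K$ since $\der 1 = 0$), so $\tilde\galg = \galg \otimes_K \galg$ is a well-defined commutative $\bfk$-algebra.

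For well-definedness of $\der$ on $\tilde\galg$, the Leibniz axiom applied with $k \in K$ gives $\der(fk) = (\der f)k$, whence the defining bilinear map is $K$-balanced. The map $\cum$ is more delicate: in a DRB one need not have $\vcum(fk) = (\vcum f) k$, but the section axiom $\der \vcum = \id_\galg$ combined with Leibniz yields $\der\!\big(\vcum(fk) - k \vcum f\big) = 0$, so the discrepancy $c := \vcum(fk) - k \vcum f$ lies in $K$. Substituting $\vcum(fk) = k \vcum f + c$ into the definition of $\cum$ and using the sliding rule $c \otimes f_\epsilon = 1 \otimes c f_\epsilon$, the two $c$-contributions cancel, establishing the $K$-balance. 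This is the main technical step.

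The integro-differential axioms then follow as follows. Leibniz for $\der$ on $\tilde\galg$ is inherited from $\galg$ through the first factor, since $\der(1 \otimes x) = 0$. The section axiom $\der \cum = \id_{\tilde\galg}$ is a direct computation on simple tensors using $\der \vcum = \id_\galg$ and again $\der(1 \otimes x) = 0$. Rather than verifying integration by parts directly, I would invoke \mcite[Thm.~2.5]{GuoRegensburgerRosenkranz2012}, which reduces it to multiplicativity of the evaluation $\evl := 1 - \cum \der$. Expanding $\evl(f \otimes f_\epsilon)$ from the definitions and sliding the constant $\evl_\galg(f) := f - \vcum \der f \in K$ through the tensor produces the stated formula $\evl(f \otimes f_\epsilon) = 1 \otimes f f_\epsilon$; multiplicativity is then immediate from $(1 \otimes a)(1 \otimes b) = 1 \otimes ab$ and commutativity of $\galg$.

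Finally, $\iota\colon f \mapsto f \otimes 1$ is manifestly a $\bfk$-algebra homomorphism intertwining $\der$, and it is injective because the multiplication map $\mu\colon \galg \otimes_K \galg \to \galg$, $f \otimes g \mapsto fg$, is well-defined by commutativity of $\galg$ and satisfies $\mu \circ \iota = \id_\galg$. The principal obstacle is the $K$-balance verification for $\cum$, where the ``integration constant'' discrepancy $c \in K$ must be shown to cancel; once that is in place, every remaining axiom reduces to routine manipulations on simple tensors.
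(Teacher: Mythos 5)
Your proof is correct and follows the same overall route as the paper's: define the operators on simple tensors, check the section axiom, compute $\evl(f \otimes f_\epsilon) = 1 \otimes f f_\epsilon$ by sliding the constant $\evl_\galg(f) \in K$ across the tensor, and conclude membership in $\ID$ from multiplicativity of $\evl$ via \mcite[Thm.~2.5(b)]{GuoRegensburgerRosenkranz2012}. Two local deviations are worth recording. First, you supply the $K$-balance verification for $\cum$, which the paper passes over in silence; this is indeed the one genuinely delicate point, since $\vcum$ is not $K$-linear in a general differential Rota-Baxter algebra, and your observation that the discrepancy $c = \vcum(fk) - k\vcum f$ lies in $\ker\der$ and cancels between the two terms of the definition is exactly what is needed. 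Second, you omit the paper's ``short calculation'' showing that $\cum$ satisfies the Rota-Baxter axiom on $\tilde\galg$; this is harmless, because the cited theorem characterizes integro-differential algebras among differential algebras equipped with a section of the derivation (no Rota-Baxter hypothesis), and the Rota-Baxter identity then follows a posteriori from the integro-differential structure. Your injectivity argument for $\iota$ via the multiplication map $\mu\colon \galg \otimes_K \galg \to \galg$ with $\mu \circ \iota = \id_\galg$ is also a clean way to make precise what the paper merely asserts as ``clear.''
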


\begin{proof}
  Let us first reassure ourselves that~$\der\colon \tilde{\galg} \to \tilde{\galg}$ is
  well-defined. It suffices to prove that~$\sum_i f(i) \otimes f_\epsilon(i) = 0$
  implies~$\sum_i f(i)' \otimes f_\epsilon(i) = 0$ for finite
  families~$f(i), f_\epsilon(i) \in \galg$. Hence assume~$\sum_i f(i) \otimes f_\epsilon(i) = 0$.
  By~\mcite[Lem.~6.4]{Eisenbud1995}, there are~$c(i,j) \in K$ and~$g(i) \in \galg$ satisfying the
  relations $\sum_j c(i,j) \, g(j) = f(i)$ for all~$i$ and~$\sum_i c(i,j) \, f_\epsilon(i) = 0$ for
  all~$j$, which
  yields~$\sum_i f(i)' \otimes f_\epsilon(i) = \sum_{ij} c(i,j) \, g(j)' \otimes f_\epsilon(i) =
  \sum_j g(j)' \otimes \big( \sum_i c(i,j) \, f_\epsilon(i) \big) = 0$
  since~$c(i,j)' = 0$.  Next we note that~$\iota\colon \galg \to \tilde\galg$ is injective since its
  image is~$\galg \otimes_K K \cong \galg$; it is a morphism of~$\Diff$
  because~$\der (f \otimes 1) = f' \otimes 1$.

  The same argument can be used to demonstrate that~$\cum\colon \tilde{\galg} \to \tilde{\galg}$ is
  well-defined. Hence assume~$\sum_i f(i) \otimes f_\epsilon(i) = 0$ as before; we
  show~$\sum_i \big( \vcum f(i) \big) \otimes f_\epsilon(i) = \sum_i 1 \otimes \big( f_\epsilon(i)
  \vcum f(i) \big)$.
  Since~$\tilde{c}(i,j) := \vcum c(i,j) \, g(j) - c(i,j) \vcum g(j) \in K$, we
  get~$\tilde{c}(i,j) \otimes f_\epsilon(i) = 1 \otimes \tilde{c}(i,j) \, f_\epsilon(i)$ and
  therefore
  \begin{equation*}
    \sum_i \big( \vcum f(i) \big) \otimes f_\epsilon(i) = \sum_{i,j} \tilde{c}(i,j) \otimes
    f_\epsilon(i) = \sum_{i,j} 1 \otimes \tilde{c}(i,j) \, f_\epsilon(i) = \sum_i 1 \otimes \big(
    f_\epsilon(i) \vcum f(i) \big),
  \end{equation*}
  where the first equality
  uses~$\sum_{i,j} \big( c(i,j) \, \vcum g(j) \big) \otimes f_\epsilon(i) = \sum_j \big( \vcum g(j)
  \big) \otimes \big( \sum_i c(i,j) \, f_\epsilon(i) \big) = 0$
  and the
  last~$\sum_{i,j} 1 \otimes \big( c(i,j) \, \vcum g(j) \big) f_\epsilon(i) = \sum_j 1 \otimes \big(
  \sum_i c(i,j) \, f_\epsilon(i) \big) \vcum g_j = 0$.

  Using now the fact that~$\vcum\colon \galg \to \galg$ is a Rota-Baxter operator, a short
  calculation reveals that~$\cum\colon \tilde\galg \to \tilde\galg$ is as well. Moreover, it is
  immediate that~$\der \cum = 1_{\tilde\galg}$, so~$(\tilde\galg, \der, \cum)$ is at least a
  differential Rota-Baxter algebra. Its evaluation is given by
  \begin{align*}
    \evl \, (f \otimes f_\epsilon) &= f \otimes f_\epsilon - \cum \, f'
    \otimes f_\epsilon = f \otimes f_\epsilon - (\vcum f') \otimes
    f_\epsilon + 1 \otimes (f_\epsilon \vcum f')\\
    &= \evl_\galg(f) \otimes f_\epsilon + 1 \otimes (f_\epsilon \vcum
    f') = 1 \otimes f_\epsilon \big(\evl_\galg(f) + \vcum f'\big)
    = 1 \otimes f_\epsilon f,
  \end{align*}
  where in the third step we have used the definition of the
  evaluation on~$\galg$ and in the fourth the fact that all tensors
  are over~$K = \ker{\der} = \im{\evl}$. From this we see that the
  evaluation on~$\tilde{\galg}$ is multiplicative, which
  implies~$(\tilde\galg, \der, \cum) \in \ID$ by
  \mcite[Thm.~2.5(b)]{GuoRegensburgerRosenkranz2012}.
\end{proof}

\begin{theorem}
  \label{thm:free-intdiff}
  The integro-differential algebra~$\tilde\galg$ defined in
  Proposition~\ref{prop:free-intdiffalg} is free over~$\galg$. In
  other words, any~$\DRB$-morphism $\phi\colon \galg \to \ogalg$ to an
  integro-differential algebra~$\ogalg$ factors as~$\phi = \tilde\phi
  \circ \iota$ for a unique $\ID$-morphism~$\tilde\phi\colon
  \tilde\galg \to \ogalg$.
\end{theorem}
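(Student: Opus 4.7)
I would prove this by writing down the obvious candidate for $\tilde\phi$, verifying it is well-defined and a morphism in $\ID$, and then arguing uniqueness from the formula $1 \otimes f_\epsilon = \evl_{\tilde\galg}(\iota(f_\epsilon))$, which is visible from the evaluation computed in Proposition~\ref{prop:free-intdiffalg}.

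\textbf{Construction.} The plan is to set
\begin{equation*}
  \tilde\phi(f \otimes f_\epsilon) := \phi(f) \cdot \evl_\ogalg(\phi(f_\epsilon))
\end{equation*}
and first check that this is $K$-balanced, hence descends to $\tilde\galg = \galg \otimes_K \galg$. For $k \in K = \ker\der$, $\phi(k) \in \ker \der_\ogalg$ because $\phi$ is a morphism of differential algebras, so $\evl_\ogalg(\phi(k)) = \phi(k)$. Using multiplicativity of $\evl_\ogalg$ (available because $\ogalg \in \ID$, by Thm.~2.5(b) of~\cite{GuoRegensburgerRosenkranz2012}), one checks that $\tilde\phi(fk \otimes f_\epsilon) = \tilde\phi(f \otimes k f_\epsilon)$.

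\textbf{Morphism of $\ID$-algebras.} Next I would verify the three structural properties:
\begin{enumerate}
\item \emph{Multiplicativity}: Follows from $\tilde\phi((f\otimes f_\epsilon)(g\otimes g_\epsilon)) = \phi(fg)\,\evl_\ogalg(\phi(f_\epsilon g_\epsilon))$ combined with multiplicativity of $\evl_\ogalg$ and commutativity of $\ogalg$.
\item \emph{Compatibility with $\der$}: Since $\evl_\ogalg(\phi(f_\epsilon)) \in \ker\der_\ogalg$, the Leibniz rule reduces $\der_\ogalg\tilde\phi(f\otimes f_\epsilon)$ to $\der_\ogalg\phi(f) \cdot \evl_\ogalg(\phi(f_\epsilon)) = \tilde\phi(\der f\otimes f_\epsilon)$.
\item \emph{Compatibility with $\cum$}: Here the main identity I will need is that $\cum_\ogalg(a \cdot k) = k \cdot \cum_\ogalg(a)$ for $k \in \ker\der_\ogalg$ (a direct consequence of integration by parts in $\ID$ with $k' = 0$), together with $\evl_\ogalg \circ \cum_\ogalg = 0$. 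Writing $a = \phi(f)$ and $c = \phi(f_\epsilon)$, the defining identity $\cum(f \otimes f_\epsilon) = (\vcum f)\otimes f_\epsilon - 1 \otimes (f_\epsilon \vcum f)$ yields
\begin{equation*}
  \tilde\phi\bigl(\cum(f\otimes f_\epsilon)\bigr) = \cum_\ogalg(a)\,\evl_\ogalg(c) - \evl_\ogalg(c\,\cum_\ogalg(a)),
\end{equation*}
and by $\evl_\ogalg(c\,\cum_\ogalg(a)) = \evl_\ogalg(c)\,\evl_\ogalg(\cum_\ogalg(a)) = 0$ this reduces to $\cum_\ogalg(a)\,\evl_\ogalg(c) = \cum_\ogalg(a\cdot\evl_\ogalg(c)) = \cum_\ogalg\tilde\phi(f\otimes f_\epsilon)$. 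This step is where the passage from $\DRB$ to $\ID$ is essential (the $\DRB$-axioms alone will not suffice), and I expect it to be the main technical obstacle.
\end{enumerate}

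\textbf{Factorization and uniqueness.} The factorization $\tilde\phi\circ\iota = \phi$ is immediate since $\tilde\phi(f\otimes 1) = \phi(f)\evl_\ogalg(1) = \phi(f)$. For uniqueness, suppose $\psi\colon \tilde\galg \to \ogalg$ is any $\ID$-morphism with $\psi\circ\iota = \phi$. Since $\psi$ commutes with $\evl$ and $\evl_{\tilde\galg}(f_\epsilon \otimes 1) = 1 \otimes f_\epsilon$ by the formula in Proposition~\ref{prop:free-intdiffalg}, we obtain $\psi(1\otimes f_\epsilon) = \evl_\ogalg(\phi(f_\epsilon))$; multiplicativity then forces $\psi(f\otimes f_\epsilon) = \phi(f)\evl_\ogalg(\phi(f_\epsilon)) = \tilde\phi(f\otimes f_\epsilon)$, establishing uniqueness.
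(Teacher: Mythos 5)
Your proposal is correct and follows essentially the same route as the paper's proof: the same explicit formula $\tilde\phi(f\otimes f_\epsilon)=\phi(f)\,\evl_\ogalg(\phi(f_\epsilon))$, the same key facts ($\ogalg\in\ID$ makes $\evl_\ogalg$ multiplicative and $\cum_\ogalg$ linear over $\ker\der_\ogalg=\im\evl_\ogalg$), and the same uniqueness argument via $1\otimes f_\epsilon=\evl(f_\epsilon\otimes 1)$. The only (welcome) addition is your explicit check that the formula is $K$-balanced, which the paper leaves implicit; your use of $\evl_\ogalg\circ\cum_\ogalg=0$ in the integral-compatibility step is just a reformulation of the paper's $1-\evl_\ogalg=\cum_\ogalg\der_\ogalg$ together with the section axiom.
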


\begin{proof}
  Let us first prove uniqueness of~$\tilde\phi$. Assuming~$\phi = \tilde\phi \circ \iota$, we
  have~$\tilde\phi(f \otimes 1) = \phi(f)$. Moreover,
  $1 \otimes f_\epsilon = \evl(f_\epsilon \otimes 1)$
  implies~$\tilde\phi(1 \otimes f_\epsilon) = \evl_\ogalg \smash{\big( \tilde\phi(f_\epsilon \otimes
    1) \big)} = \evl_\ogalg \smash{\big( \phi(f_\epsilon) \big)}$
  since~$\tilde\phi$ is an~$\ID$-morphism and thus commutes with the evaluation. As~$\tilde\phi$ is
  a morphism of $\bfk$-algebras we
  obtain~$\tilde\phi(f \otimes f_\epsilon) = \phi(f) \, \evl_\ogalg \smash{\big( \phi(f_\epsilon)
    \big)}$, which determines~$\tilde\phi\colon \tilde\galg \to \ogalg$ uniquely.

  For proving existence, it suffices to show that
  defining~$\tilde\phi(f \otimes f_\epsilon) := \phi(f) \, \evl_\ogalg \smash{\big( \phi(f_\epsilon)
    \big)}$
  yields an~$\ID$-morphism~$\tilde\phi$. Indeed, it is a $\bfk$-algebra homomorphism since~$\phi$
  and~$\evl_\ogalg$ are; one sees immediately that it respects the derivation. Let us now check
  that~$\tilde\phi$ also respects the Rota-Baxter structure,
  meaning~$\tilde\phi \big(\cum (f \otimes f_\epsilon)\big) = \cum_\ogalg \, \tilde\phi (f \otimes
  f_\epsilon)$.
  For the left-hand side, we apply~$\tilde\phi$
  to~$\cum \, f \otimes f_\epsilon = (\vcum f) \otimes f_\epsilon - 1 \otimes (f_\epsilon \vcum f)$
  to obtain
  \begin{align*}
    \phi(\vcum f) \, \evl_\ogalg \big( \phi(f_\epsilon) \big) -
    \evl_\ogalg \big( \phi(f_\epsilon \vcum f) \big) = \evl_\ogalg
    \big( \phi(f_\epsilon) \big) \, \Big( \phi(\vcum f) - \evl_\ogalg
    \big( \phi( \vcum f) \big) \Big)
  \end{align*}
  using the multiplicativity of~$\phi$ and~$\evl_\ogalg$ on the second
  term. Since by definition~$1_\ogalg - \evl_\ogalg = \cum_{\!\ogalg}
  \der_\ogalg$, the parenthesized expression above is~$\cum_{\!\ogalg}
  \, \der_\ogalg \, \phi (\vcum f) = \cum_{\!\ogalg} \, \phi \big(
  \der \vcum f) = \cum_{\!\ogalg} \, \phi(f)$. For the right-hand
  side, using~$\cum_{\!\ogalg}$ on~$\tilde\phi( f \otimes f_\epsilon)
  = \phi(f) \, \evl_\ogalg \smash{\big( \phi(f_\epsilon) \big)}$
  yields~$\evl_\ogalg \smash{\big( \phi(f_\epsilon) \big)} \,
  \cum_{\!\ogalg} \phi(f)$ since~$(\ogalg, \der_\ogalg,
  \cum_{\!\ogalg})$ is an integro-differential algebra
  and~$\cum_{\!\ogalg}$ is linear over~$\ker{\der_\ogalg} =
  \im{\,\evl_\ogalg}$ by \mcite[Rem.~2.6(d)]{GuoRegensburgerRosenkranz2012}.
\end{proof}

The crucial point of the embedding of~$\galg[\der,\vcum]$ into a ring
of integro-differential operators is that~$\cum f \der^k$, though a
normal form of~$\galg[\der,\vcum]$, splits when viewed as an
integro-differential operator. Its reduction to normal forms can be
computed as follows.

\begin{lemma}
  \label{lem:ev-rung}
  Let~$(\galg, \der, \cum)$ be an integro-differential algebra. Then
  we have
  \begin{equation}
    \label{eq:ev-rung}
    \cum f \der^k = \sum_{i=0}^{k-1} (-1)^i \big( f^{(i)} -
    \evl(f^{(i)}) \, \evl \big) \der^{k-i-1} + (-1)^k \cum f^{(k)}
  \end{equation}
  for all~$k>0$.
\end{lemma}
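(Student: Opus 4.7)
The plan is a straightforward induction on $k$, with the base case $k=1$ supplied by the integration-by-parts axiom and the inductive step being essentially mechanical bookkeeping.

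For the base case, I would invoke the rewrite rule established in Proposition~\ref{prop:lin-op-rings}\ref{it:intdiffop}, which in the zero-weight case (as the statement of the lemma presupposes, since no~$\lambda$ appears in~(\ref{eq:ev-rung})) takes the form $\cum f \der = f - \cum f' - \evl(f) \, \evl$. This is precisely the content of~(\ref{eq:ev-rung}) when~$k=1$: the single summand with~$i=0$ contributes $(f - \evl(f) \, \evl) \, \der^0 = f - \evl(f) \, \evl$, and the trailing term contributes $-\cum f'$. So nothing needs to be proved here beyond unpacking the notation.

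For the inductive step, assume~(\ref{eq:ev-rung}) holds for some~$k\geq 1$. I would write $\cum f \der^{k+1} = (\cum f \der^{k}) \, \der$, substitute the inductive hypothesis, and distribute the trailing~$\der$ on the right. The terms $(-1)^i (f^{(i)} - \evl(f^{(i)}) \, \evl) \, \der^{k-i-1}$ for $0 \le i \le k-1$ simply absorb the extra~$\der$ to become $(-1)^i (f^{(i)} - \evl(f^{(i)}) \, \evl) \, \der^{k-i}$; note here that we are not trying to simplify $\evl \, \der^j$ any further, we just keep it as a formal product. For the remaining boundary term $(-1)^k \cum f^{(k)} \der$, I would apply the base case (with~$f$ replaced by~$f^{(k)}$) to rewrite it as $(-1)^k \big(f^{(k)} - \evl(f^{(k)}) \, \evl\big) + (-1)^{k+1} \cum f^{(k+1)}$. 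Inserting the first summand as the $i=k$ term of the new sum and keeping~$(-1)^{k+1} \cum f^{(k+1)}$ as the new tail yields exactly~(\ref{eq:ev-rung}) with~$k+1$ in place of~$k$.

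The hard part is essentially non-existent; the only thing worth double-checking is the index and sign bookkeeping in the reduction step, together with the observation that $\evl \, \der^{j}$ need not be expanded (otherwise one risks confusion, since $\evl \, \der = \der - \cum \der^{2}$ does not vanish, even though $\der \, \evl = 0$). Because of this asymmetry, pushing~$\der$ to the right through the tail~$\evl \, \der^{k-i-1}$ is trivial, and no further manipulation of the evaluation rung elements is required.
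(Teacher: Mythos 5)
Your proof is correct and follows exactly the same route as the paper's: induction on~$k$, with the base case~$k=1$ read off from the $\ID$-relator of Proposition~\ref{prop:lin-op-rings}\ref{it:intdiffop} and the inductive step obtained by right-multiplying by~$\der$ and applying the base case to the tail term~$(-1)^k \cum f^{(k)} \der$. Your side remarks --- that the formula as written presupposes weight zero (trivial shift), and that~$\evl\,\der^{j}$ should be left unexpanded --- are both accurate and, if anything, make the bookkeeping more transparent than the paper's terse version.
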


\begin{proof}
  We use induction on~$k>0$. The base case~$k=1$ follows from the
  $\ID$-relator of
  Proposition~\ref{prop:lin-op-rings}\ref{it:intdiffop}. Assume now
  the claim holds for some~$k>0$. Then we have
  \begin{equation*}
    \cum f \der^{k+1} = \sum_{i=0}^{k-1} (-1)^i \big( f^{(i)} -
    \evl(f^{(i)}) \, \evl \big) \der^{k-i} + (-1)^k \cum f^{(k)}
    \der,
  \end{equation*}
  and the last term yields~$(-1)^k f^{(k)} + (-1)^{k+1} \evl(f^{(k)})
  \, \evl + (-1)^{k+1} \cum f^{(k+1)}$ by the
  case~$k=1$. Incorporating the first two summands into the summation,
  one obtains~\eqref{eq:ev-rung} with~$k+1$ in place of~$k$, which
  completes the induction.
\end{proof}

We can now provide the embedding of differential Rota-Baxter operators
into a ring of integro-differential operators with ``generic''
integral. The punch line is that one must pass to the free
integro-differential algebra introduced in
Proposition~\ref{thm:free-intdiff}. Since its Rota-Baxter operator
introduces new integration constants, one may view it as being
initialized at a generic point; this will become clearer in
Section~\ref{sec:intdiff-weyl}.

\begin{theorem}
  \label{thm:gen-isom}
  Let~$(\galg, \der, \vcum)$ be an ordinary differential Rota-Baxter
  algebra, and~$(\tilde\galg, \der, \cum)$ the free
  integro-differential algebra defined in
  Proposition~\ref{thm:free-intdiff}. Then the assignment
  \begin{equation}
    \label{eq:gen-isom}
    f \der^k \mapsto f \der^k,\qquad
    f \vcum \tilde{f} \mapsto f \cum \tilde{f},\qquad
    f \vcum \tilde{f} \der^k \mapsto f \cum \tilde{f} \der^k
  \end{equation}
  defines an algebra monomorphism~$\psi\colon \galg[\der,\vcum] \to
  \tilde\galg[\der, \cum]$.
\end{theorem}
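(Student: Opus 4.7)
My plan is to define $\psi$ via the universal property of $\galg[\der,\vcum]$ and then establish injectivity by analyzing how $\psi(T)$ decomposes in the canonical $\bfk$-module structure of $\tilde\galg[\der,\cum]$.

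For the construction, I will use the universal property of the free product $\galg[\Omega] := \galg * \bfk\langle\der,\vcum\rangle$ to obtain a unique $\bfk$-algebra homomorphism $\bar\psi\colon \galg[\Omega] \to \tilde\galg[\der,\cum]$ extending $\iota\colon \galg \hookrightarrow \tilde\galg$ (Proposition~\ref{prop:free-intdiffalg}) and sending $\der \mapsto \der$, $\vcum \mapsto \cum$. Since $(\tilde\galg, \der, \cum) \in \ID \subseteq \DRB$, all Leibniz, Rota-Baxter, and section relators are mapped to zero, so $\bar\psi$ descends through the quotient $\galg[\der,\vcum] = \galg[\Omega]/[E(\DRB)]$ to yield the desired $\psi$; the formulas~\eqref{eq:gen-isom} fall out immediately from the construction.

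For injectivity, let $T \in \ker\psi$ and decompose via Lemma~\ref{lem:drbo-decomp} as $T = T_1 + T_2 + T_3$, with $T_1 = \sum_k p_k\der^k \in \galg[\der]$, $T_2 = \sum_\alpha u_\alpha \vcum v_\alpha \in \galg[\vcum]\setminus\galg$, and $T_3 = \sum_{k>0,i} a_{i,k}\vcum b_{i,k}\der^k \in [\evl]$. The image $\psi(T_3)$ consists of monomials $(a_{i,k}\otimes 1)\cum(b_{i,k}\otimes 1)\der^k$ that are not in normal form in $\tilde\galg[\der,\cum]$; I would apply Lemma~\ref{lem:ev-rung} together with $\evl(f\otimes 1) = 1\otimes f$ (Proposition~\ref{prop:free-intdiffalg}) to express $\psi(T) = A+B+C$ in the direct-sum decomposition $\tilde\galg[\der,\cum] = \tilde\galg[\der] \dotplus (\tilde\galg[\cum]\setminus\tilde\galg) \dotplus (\evl)$ from Proposition~\ref{prop:lin-op-rings}\ref{it:intdiffop}. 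Each of $A, B, C$ must then vanish independently.

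The key step is the $(\evl)$-component
\[
C = -\sum_{k>0}\sum_i\sum_{j=0}^{k-1}(-1)^j\,(a_{i,k}\otimes b_{i,k}^{(j)})\,\evl\,\der^{k-j-1},
\]
which arises entirely from $\psi(T_3)$. Collecting the coefficient of each $\evl\,\der^l$ produces a tensor equation in $\tilde\galg = \galg \otimes_\bfk \galg$ (using the ordinariness hypothesis, so $K = \bfk$); downward induction on the maximal $k$, combined with linear independence of pure tensors in $\galg \otimes_\bfk \galg$, forces all $a_{i,k}, b_{i,k}$ to vanish, so $T_3 = 0$. The residual equation $A + B = 0$ then splits as $\sum_k (p_k\otimes 1)\der^k = 0$ in $\tilde\galg[\der]$ and $\sum_\alpha (u_\alpha\otimes 1)\cum(v_\alpha\otimes 1) = 0$ in $\tilde\galg[\cum]\setminus\tilde\galg$, yielding $T_1 = 0$ by injectivity of $\iota$ and $T_2 = 0$ by linear independence of canonical forms in $\tilde\galg[\cum]$. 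The main obstacle will be the tensor-algebra bookkeeping in $C$—disentangling how the coefficients $a_{i,k}\otimes b_{i,k}^{(j)}$ at different $(k,j)$ accumulate when gathered by the outer $\der^l$ factor, and extracting enough independent relations in $\galg\otimes_\bfk\galg$ to recover all $a_{i,k}$ and $b_{i,k}$; this step depends crucially on ordinariness, which makes $\tilde\galg = \galg\otimes_\bfk\galg$ a genuine $\bfk$-tensor product permitting separation of the two factors.
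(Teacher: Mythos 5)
Your proposal follows essentially the same route as the paper's proof: the homomorphism is obtained from the fact that the $\DRB$-relators (with coefficients pushed through $\iota$) lie in the relator ideal of~$\tilde\galg[\der,\cum]$ (your packaging via the universal property of the free product $\galg * \bfk\langle\der,\vcum\rangle$ is a slightly more explicit rendering of the same observation), and injectivity is proved exactly as in the paper by comparing the decomposition of Lemma~\ref{lem:drbo-decomp} with that of Proposition~\ref{prop:lin-op-rings}, expanding $\cum \tilde f\,\der^k$ via Lemma~\ref{lem:ev-rung}, isolating the $(\evl)$-component, extracting the coefficient of each $\evl\,\der^l$, and running a downward induction on the top differentiation order~$k$. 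Your formula for~$C$ agrees with the paper's $\bar{w}_{f,g,k}$. The one step where you are looser than the paper is the tensor separation: ``linear independence of pure tensors'' is not a valid principle as stated, since $\sum_i a_i \otimes b_i = 0$ does not force the individual $a_i, b_i$ to vanish (e.g.\ $(a_1+a_2)\otimes b - a_1 \otimes b - a_2 \otimes b = 0$). The paper closes this by first arranging the coefficients and nuclei occurring in the distinct normal-form monomials $f \vcum g\, \der^k$ to be linearly independent families and then applying~\mcite[Lem.~6.4]{Eisenbud1995} to the resulting relation in $\galg \otimes_\bfk \galg$ (with $K=\bfk$ by ordinariness) to conclude that the scalar coefficients vanish. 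Since you explicitly flag this disentangling as the main remaining obstacle, it is an acknowledged rather than hidden gap; filling it with the Eisenbud-type argument (or, when $\bfk$ is a field, by choosing a basis of $\galg$) completes your proof along the paper's lines.
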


\begin{proof}
  From Proposition~\ref{prop:lin-op-rings} we know
  that~$\galg[\der,\vcum] = \galg[\der] \dotplus \galg[\vcum] \!\setminus\! \galg \dotplus [\evl]$,
  where the three components have normal forms~$f\der^k$, $f \vcum \tilde{f}$
  and~$f \vcum \tilde{f} \der^k$, respectively. Hence the map~$\psi$ is well-defined, and it is
  clearly $\bfk$-linear. We can also describe~$\psi$ in a different but equivalent way: Recall
  that~$\galg * \bfk \langle \der, \vcum \rangle$ is a coproduct in the category of (noncommutative)
  algebras, with canonical injections~$i_1\colon \galg \to \galg * \bfk \langle \der, \vcum \rangle$
  and~$i_2\colon \bfk \langle \der, \vcum \rangle \to \galg * \bfk \langle \der, \vcum \rangle$.
  Similarly, $\tilde\galg * \bfk \langle \der, \cum \rangle$ is a coproduct with canonical
  injections~$\tilde{\i}_1$ and~$\tilde{\i}_2$. Then by the universal property for the
  coproduct~$\galg * \bfk \langle \der, \vcum \rangle$, there is an algebra
  morphism~$j\colon \galg * \bfk \langle \der, \vcum \rangle \to \tilde{\galg} * \bfk \langle \der,
  \cum \rangle$
  such that~$j \circ i_1 = \tilde{\i}_1 \circ \iota$ and~$j \circ i_2 = \tilde{\i}_2 \circ i$,
  where~$i\colon \bfk \langle \der, \vcum \rangle \to \bfk \langle \der, \cum \rangle$ is the
  (trivial) isomorphism that renames~$\vcum$ into~$\cum$.
  Writing~$[\RB] \subset \galg * \bfk \langle \der, \vcum \rangle$
  and~$[\ID] \subset \tilde{\galg} * \bfk \langle \der, \cum \rangle$ for the relator ideals
  of~$\galg[\der,\vcum]$ and~$\tilde\galg[\der,\cum]$, respectively, we
  have~$j \, [\DRB] \subset [\ID]$ and hence~$\tilde{p} j [\DRB] = 0$ for the canonical
  projection~$\tilde{p}\colon \tilde\galg * \bfk \langle \der, \cum \rangle \to
  \tilde\galg[\der,\cum] = \big( \tilde\galg * \bfk \langle \der, \cum \rangle \big) \big/ [\ID]$.
  Writing
  $p\colon \galg * \bfk \langle \der, \vcum \rangle \to \galg[\der,\vcum] = \big( \galg * \bfk
  \langle \der, \vcum \rangle \big) \big/ [\DRB]$
  for the other projection, we conclude
  that~$\tilde{p}j\colon \galg * \bfk \langle \der,\vcum \rangle \to \tilde\galg[\der,\cum]$
  descends to an algebra morphism $\galg[\der,\vcum] \to \tilde\galg[\der,\cum]$, which is easily
  recognized as~$\psi$ so that~$\tilde{p}j = \psi p$.

  % By~\cite[Thm.~2.9]{Hungerford1980} we have~$\tilde{p}j = \psi p$
  % and~$\ker{\psi} = \ker{\tilde{p} j} + [\DRB]$.

  It remains to prove that~$\psi$ is injective. Recall that
  although~$f \vcum \tilde{f} \der^k \in \galg[\der, \vcum]$ is a normal form, this is not the case
  for its image~$f \cum \tilde{f} \der^k \in \tilde\galg[\der, \cum]$. In fact, we will apply
  Lemma~\ref{lem:ev-rung} for rewriting the latter as a $\bfk$-linear combination of
  $\tilde\galg[\der, \cum]$-normal forms. Now to show that~$\psi$ is injective,
  assume~$\psi( \sum_j w_j) = 0$ with~$w_j \neq 0$.
  Since~$\tilde\galg[\der,\cum] = \tilde\galg[\der] \dotplus \tilde\galg[\cum] \!\setminus\!
  \tilde\galg \dotplus (\evl)$,
  those~$\psi(w_j) \in \tilde\galg[\der,\cum]$ in the sum~$\sum_j \psi(w_j) = 0$ that belong
  to~$\galg[\der]$ and~$\galg[\cum] \!\setminus\! \galg$ must cancel with corresponding
  contributions in the expansion~\eqref{eq:ev-rung} of the
  other~$\psi(w_j) \in \tilde\galg[\der,\cum]$. Hence we are left with a sum of the
  form~$\sum_{k,l} w_{k,l} = 0$ of evaluation terms coming from~$\psi(f_l \, \vcum g_l \, \der^k)$,
  which are given by
  \begin{equation*}
    w_{k,l} =
    \sum_{i=0}^{k-1} (-1)^{i+1} f_l \, \evl(g_l^{(i)}) \, \evl \der^{k-i-1}
    = \sum_{i=0}^{k-1} (-1)^{i+1} (f_l \otimes g_l^{(i)}) \, \evl
    \der^{k-i-1} \;\in\; (\evl) \subset \tilde\galg[\der,\cum] .
  \end{equation*}
  Let~$\bar{k}$ be the highest exponent~$k$ occurring among the~$\psi(f_l \, \vcum g_l \, \der^k)$,
  and set~$w_l := w_{l,\bar{k}}$. Since the~$\evl \der^i$ are $\bfk$-linearly independent,
  extracting the highest-order terms~$\evl \der^{\bar{k}-1}$, corresponding to~$i=0$ in the above
  sum, yields the relation $\sum_l f_l \otimes g_l = 0$. Applying the
  criterion~\mcite[Lem.~6.4]{Eisenbud1995} there exist~$a_{lm} \in \bfk$ and~$h_m \in \galg$ such
  that~$\sum_{m} a_{lm} h_m = f_l$ for each~$l$, and~$\sum_{l} a_{lm} \, g_l = 0$ for each~$m$. This
  implies~$\sum_l f_l \, \vcum g_l \, \der^k = \sum_m h_m \, \vcum \big( \sum_l a_{lm} g_l \big)
  \der^k = 0$,
  which means that there are no $k$-th order terms~$w_j \in [\evl] \subset \galg[\der,\vcum]$ in the
  original sum~$\psi(\sum_j w_j) = 0$. By induction on~$k$, we conclude that~$\sum_j w_j$ has in
  fact no term~$w_j \in [\evl]$. But then there are no terms~$w_j \in \galg[\der]$
  or~$w_j \in \galg[\vcum] \setminus \galg$ since their images in~$\smash{\tilde\galg}[\der,\cum]$
  would have nothing to cancel. Hence~$\sum_j w_j = 0$, completing the proof that~$\psi$ is
  injective.
\end{proof}

Since the ring of integro-differential operators is rather
well-understood~\mcite{RosenkranzRegensburger2008a,RosenkranzRegensburgerTecBuchberger2012}, it is
advantageous to have a description of the less familiar and somewhat
more subtle differential Rota-Baxter operator ring~$\galg[\der,\vcum]$
as a subring of~$\tilde\galg[\der,\cum]$. However, there is a price to
pay---the \emph{expansion of integration constants} from~$\bfk \subset
\galg$ to~$\bfk \otimes_\bfk \galg \subset \tilde\galg$. This becomes
even more transparent in the case of polynomial coefficients, which we
describe next.

%%%%%%%%%%%%%%%%%%%%%%%%%%%%%%%%%%%%%%%%%%%%%%%%%%%%%%%%%%%%%%%%%%%%%%%%

% =============================================================================
\section{The Integro-Differential Weyl Algebra}
\label{sec:intdiff-weyl}
% =============================================================================

It is most efficient for our purposes to view the classical \emph{Weyl
  algebra}~$A_1(\bfk)$ in the language of skew polynomial
rings~\mcite[\S7.3]{Cohn2003a} since this allows a smooth passage to
the Rota-Baxter case and moreover provides a convenient framework for
algorithmic tasks~\mcite{ChyzakSalvy1998}. Let us recall the basic
setup (without the twist endomorphism that we shall not need here).
For this section we assume that~$\bfk$ is a field of characteristic zero.

If~$A$ is a $\bfk$-algebra\footnote{In contrast
  to~\mcite[\S7.3]{Cohn2003a}, we do not require that~$A$ be a domain
  though if it is then~$A[\xi; \delta]$ is as well. One sees
  easily~\mcite[\S1.1.2]{McConnellRobson2001} that the construction
  works for any~$\bfk$-algebra~$A$, and this will indeed be crucial
  for our definition of the integro-differential Weyl algebra.}  with
derivation~$\delta\colon A \to A$ and $\xi$ an indeterminate, the
\emph{skew polynomial ring}~$A[\xi; \delta]$ is the free left
$A$-module~$\bigoplus_{n \ge 0} A \xi^n$ with
$\bfk$-basis~$\xi^n$. The multiplication on~$A[\xi; \delta]$ extends
the one on~$A$ through the rule~$\xi a = a \xi + \delta(a)$, subject
to the obvious identifications~$\xi^0 = 1, \xi^{n+1} = \xi
\xi^n$. Extending also the homotheties in the obvious way, one obtains
a $\bfk$-algebra~$A[\xi; \delta]$ that contains~$A$ as subalgebra.

Using~$A = \bfk[x]$ with the standard derivation~$\delta = d/dx$ and
indeterminate~$\xi = \der$ yields the one-dimensional Weyl
algebra~$A_1(\bfk) = \bfk[x][\der; \delta]$. One can also introduce
the $n$-dimensional Weyl algebra in a similar way, starting with the
algebra~$A = \bfk[x_1, \dots, x_n]$ and the derivations~$\delta_k =
\der/\der x_k$ and adjoining the indeterminates~$\xi_k = \der_k$ to
obtain the skew polynomial ring~$A_n(\bfk) = \bfk[x_1, \dots,
x_n][\der_1, \delta_1] \cdots [\der_n, \delta_n]$. Here we restrict
ourselves to the one-dimensional case, for which we shall henceforth
use the alternative notation~$\diffweyl := A_1(\bfk)$ as
in~\mcite{RegensburgerRosenkranzMiddeke2009}. In view of the upcoming
Rota-Baxter analogs, we refer to~$\diffweyl$ as the \emph{differential
  Weyl algebra}.

To be more precise, we actually employ the opposite route of
defining~$\diffweyl := \bfk[\der][x; \delta]$ where~$\delta\colon
\bfk[\der] \to \bfk[\der]$ is now defined as the \emph{negative} of
the standard derivation, so that $\delta(\der^n) := -n \,
\der^{n-1}$. One sees immediately that both definitions are equivalent
since the Weyl algebra enjoys the well-known automorphism~$x
\leftrightarrow -\der$. The reason for this unusual definition is
that, for the Rota-Baxter counterpart of~$\diffweyl$, only the second
definition will work.\footnote{Following the standard definition of
  the differential case, one would need~$\delta(x) = -\ader^2$, which
  does not yield a derivation~$\delta\colon \bfk[x] \to
  \bfk[x]$. Algorithmically speaking, the problem is that while the
  degrees of~$\der$ are decreasing, the ones of~$\ader$ are
  increasing; see the remark
  before~\mcite[Def.~2]{RegensburgerRosenkranzMiddeke2009}.} Indeed,
we introduce~\mcite{RegensburgerRosenkranzMiddeke2009} the
\emph{integro Weyl algebra}~$\intweyl := \bfk[\ader][x; \delta]$ with
the derivation~$\delta(\ell^n) := + n \, \ell^{n+1}$. Note that here
as in~\mcite{RegensburgerRosenkranzMiddeke2009} we use~$\ader$ rather
than~$\cum$ for the Rota-Baxter operator; this improves the
readability of iterated integrals and emphasizes the dual nature
of~$\der$ and~$\ader$.

Both derivations are fully determined by their action on the
generators, namely~$\delta(\der) = -1$ and~$\delta(\ader) =
\ader^2$. The former encodes the Leibniz axiom in the commutator
form~$[x, \der] = 1$, the latter the analogous Rota-Baxter axiom~$[x,
\ader] = \ader^2$. Let us now make this precise by comparing those
Weyl algebras with the corresponding \emph{linear operator rings} of
Section~\ref{sec:oprings-modules}. From now on, all weights are zero;
we shall suppress the subscript~$\lambda=0$ for the standard varieties
in~$\Diff[0]$, $\RB[0]$, $\DRB[0]$ and $\ID[0]$.

\begin{lemma}
  \label{lem:diff-and-int-isom}
  We have~$\diffweyl \cong \bfk[x][\der\Mid\Diffz]$ and~$\intweyl
  \cong \bfk[x][\ader \Mid \RBz]$ as~$\bfk$-algebras.
\end{lemma}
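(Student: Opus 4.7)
The plan is to construct, in each case, mutually inverse $\bfk$-algebra homomorphisms by combining the skew polynomial universal property in one direction with the operator ring universal property of Definition~\ref{def:eopring} in the other. Since both sides of each isomorphism are generated over $\bfk$ by $x$ and $\der$ (respectively $x$ and $\ader$), agreement on generators will make the maps mutually inverse.

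For $\diffweyl \cong \bfk[x][\der \Mid \Diffz]$, the forward map $\phi$ is obtained by applying the skew polynomial universal property to the inclusion $\bfk[\der]\hookrightarrow \bfk[x][\der]$ together with $x\mapsto x$. Well-definedness reduces, via the derivation property of $\mathrm{ad}_x$, to the single instance $x\der = \der x - 1$, which is the case $f=x$ of the Leibniz Gr\"obner rule in Proposition~\ref{prop:lin-op-rings}(a). For the reverse map $\psi\colon \bfk[x][\der] \to \diffweyl$, the operator-ring universal property requires the Leibniz relators $\der f - f\der - f'$ to vanish in $\diffweyl$; by $\bfk$-linearity one reduces to $f = x^n$, and the identity $\der x^n = x^n\der + n x^{n-1}$ follows by a short induction from $\der x = x\der + 1$.

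The integro case is parallel. The skew polynomial rule $x\ader = \ader x + \ader^2$ in $\intweyl$ coincides with the $f=1$ instance of the Rota-Baxter Gr\"obner rule $\ader f\ader \to f\backquote\ader - \ader f\backquote$ from Proposition~\ref{prop:lin-op-rings}(b), so $\phi\colon \intweyl \to \bfk[x][\ader]$ is constructed as before, once the iterated commutation $[x,\ader^k] = k\ader^{k+1}$ is lifted inductively. For $\psi$, the vanishing in $\intweyl$ of each Rota-Baxter relator $\ader f\ader - f\backquote\ader + \ader f\backquote$ reduces by $\bfk$-linearity to
\begin{equation*}
\ader\, x^n\, \ader \;=\; \tfrac{1}{n+1}\bigl(x^{n+1}\ader - \ader\, x^{n+1}\bigr),
\end{equation*}
which I would prove by induction on $n$ using the commutation $[x,\ader^k] = k\ader^{k+1}$.

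This Rota-Baxter identity is the main obstacle: unlike its differential counterpart, it entails a non-trivial interplay between arbitrary powers of $x$ and $\ader$. I expect the induction to close cleanly, since both sides of the displayed identity admit matching closed-form expansions in the $x^a\ader^b$-basis of $\intweyl$ (one obtains, e.g., $\ader x^n = \sum_{k=0}^n (-1)^k\tfrac{n!}{(n-k)!}\, x^{n-k}\ader^{k+1}$ by an easy induction from the basic relation).
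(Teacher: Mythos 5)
Your proposal is correct and follows essentially the same route as the paper: both arguments reduce the Leibniz and Rota--Baxter relators to their single-generator instances $[\der,x]=1$ and $[x,\ader]=\ader^2$ by $\bfk$-linearity and induction on $n$, the decisive computation being the identity $(n+1)\,\ader x^n \ader = x^{n+1}\ader - \ader x^{n+1}$, which the paper establishes by exactly the induction you outline. Packaging this as a pair of mutually inverse maps via the skew-polynomial and operator-ring universal properties, rather than as a matching of presentations, is only a cosmetic difference; your closed-form expansion of $\ader x^n$ is also correct and gives a valid alternative verification of the key identity.
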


\begin{proof}
  By Proposition~\ref{prop:lin-op-rings} we know
  that~$\bfk[x][\der\Mid\Diffz]$ and~$\bfk[x][\ader\Mid\RBz]$ are
  respectively defined by the Leibniz relation~$\der f = f \der + f'$
  and the Rota-Baxter relation~$\ader f \ader = f\backquote \ader -
  \ader f\backquote$; the latter employs the notation~$\ader$ instead
  of~$\cum$ for uniformity. Clearly, it is enough to require the
  relations on the~$\bfk$-basis~$x^n$ of~$\bfk[x]$. But the Leibniz
  relation for~$f = x^n$ follow immediately by a simple induction
  argument from the special case~$f=x$, which is just the commutator
  relation~$[\der, x] = -1$. For the Rota-Baxter relation, we show now
  that it suffices to take the special case~$f=1$, embodied in the
  commutation~$[x, \ader] = \ader^2$. We use induction on~$n>0$ to
  prove~$n \, \ader x^{n-1} \ader \equiv x^n \ader - \ader x^n$ modulo
  the two-sided ideal~$(x \ader - \ader x - \ader^2)$. The base
  case~$n=1$ being trivial, assume the claim for~$n \ge 1$. Then we
  have
  \begin{align*}
    \ader x^n \ader &\equiv n \, \ader^2 x^{n-1} \ader + \ader^2 x^n
    \equiv n \, (x \ader - \ader x) x^{n-1} \ader + (x \ader - \ader
    x) x^n\\
    \text{or} \qquad
    (n+1) \, \ader x^n \ader &\equiv n \, x \ader x^{n-1} \ader + x
    \ader x^n - \ader x^{n+1} \equiv x^{n+1} \ader - \ader x^{n+1},
  \end{align*}
  where the last step uses the induction hypothesis. Hence we obtain
  \begin{equation}
    \label{eq:diff-and-weyl-isom}
    \bfk[x][\der\Mid\Diffz] \cong
    \bfk\langle x, \der\rangle/(\der x - x \der - 1) \quad\text{and}\quad
    \bfk[x][\ader\Mid\RBz] \cong \bfk\langle x, \ader\rangle/(x \ader -
    \ader x - \ader^2).
  \end{equation}
  Using the reductions~$x \der \to \der x - 1$ and~$x \ader \to \ader
  x + \ader^2$ for the ideals in~\eqref{eq:diff-and-weyl-isom}, this
  corresponds to the multiplication in the skew polynomial
  rings~$\diffweyl = \bfk[\der][x; \delta]$ and~$\intweyl =
  \bfk[\ader][x; \delta]$.
\end{proof}

The integro Weyl algebra shares certain common features with its
differential counterpart but also exhibits some \emph{striking
  differences}~\mcite{RegensburgerRosenkranzMiddeke2009}. While both
are Noetherian integral domains, the differential Weyl algebra is a
simple ring but the integro Weyl algebra is not. On the other hand,
the latter comes with a natural grading whereas the former only enjoys
filtration. In this paper, we are not concerned with their further
study. Let us just mention the following noteworthy commutations.

\begin{lemma}
  \label{lem:x-lj}
  We have the commutations~$[x^i, \ader] = i \, \ader x^{i-1} \ader$
  and~$[x, \ader^j] = j \, \ader^{j+1}$ in~$A_1(\ader)$.
\end{lemma}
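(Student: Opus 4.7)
The plan is to derive both commutations from the single basic relation $[x,\ader] = \ader^2$, which has already been established in the proof of Lemma~\ref{lem:diff-and-int-isom} (via the Rota-Baxter axiom) and is also the content of the skew polynomial rule $x\ader = \ader x + \delta(\ader)$ with $\delta(\ader) = \ader^2$ in $\intweyl = \bfk[\ader][x;\delta]$.

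For the second identity $[x,\ader^j] = j \, \ader^{j+1}$, the cleanest approach is to read it off directly from the skew polynomial structure: since $\delta$ is a derivation on $\bfk[\ader]$ with $\delta(\ader) = \ader^2$, induction on $j$ gives $\delta(\ader^j) = j \, \ader^{j+1}$, and hence $x \ader^j = \ader^j x + j \, \ader^{j+1}$ by the defining rule of the skew polynomial ring. Alternatively, one can avoid invoking skew polynomials and instead use the Leibniz property of the commutator bracket, writing
\begin{equation*}
  [x,\ader^{j+1}] = [x,\ader]\,\ader^j + \ader\,[x,\ader^j]
  = \ader^{j+2} + j\,\ader^{j+2} = (j+1)\,\ader^{j+2},
\end{equation*}
with base case $j=1$ supplied by $[x,\ader] = \ader^2$.

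For the first identity $[x^i,\ader] = i\,\ader x^{i-1} \ader$, the quickest route is to observe that this is \emph{exactly} what the inner induction in the proof of Lemma~\ref{lem:diff-and-int-isom} has already demonstrated: there it is shown that $n\,\ader x^{n-1}\ader \equiv x^n \ader - \ader x^n$ modulo the two-sided ideal $(x\ader - \ader x - \ader^2)$, which is precisely the defining ideal of $\intweyl$. So this part of the lemma is actually a byproduct of the previous proof and requires no new argument. If one prefers a self-contained derivation, a direct induction on $i$ works: assuming the identity for $i$, one computes
\begin{equation*}
  [x^{i+1},\ader] = x\,[x^i,\ader] + [x,\ader]\,x^i
  = i\,x\ader x^{i-1}\ader + \ader^2 x^i,
\end{equation*}
and then uses $x\ader = \ader x + \ader^2$ once in the first summand to get $i\,\ader x^i\ader + i\,\ader^2 x^{i-1}\ader + \ader^2 x^i$; applying the induction hypothesis in the reverse direction to rewrite $i\,\ader^2 x^{i-1}\ader + \ader^2 x^i = \ader\,[x^i,\ader] + \ader^2 x^i = \ader x^i\ader$ collapses the expression to $(i+1)\,\ader x^i \ader$, completing the induction.

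There is essentially no obstacle here: the main point is to recognize that both commutations are encodings of the single relation $\delta(\ader) = \ader^2$ on the coefficient ring of the skew polynomial ring, one by iterating $x$ (giving the first identity) and the other by iterating $\ader$ (giving the second). The mild subtlety, if any, is just to choose the right grouping in the induction for the first identity so that the Rota-Baxter commutation $x\ader = \ader x + \ader^2$ is applied only once per step.
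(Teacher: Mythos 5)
Your proposal is correct and follows essentially the same route as the paper: the second commutation is read off from the defining skew polynomial relation $x\,a = a\,x + \delta(a)$ with $\delta(\ader^j) = j\,\ader^{j+1}$, and the first is recognized as exactly the congruence $n\,\ader x^{n-1}\ader \equiv x^n\ader - \ader x^n$ already established (modulo the defining ideal of $\intweyl$) in the proof of Lemma~\ref{lem:diff-and-int-isom}. The additional self-contained inductions you supply are also correct, but not needed beyond what the paper's two-line proof invokes.
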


\begin{proof}
  The first commutation
  is~\mcite[Lem.~11]{RegensburgerRosenkranzMiddeke2009} and follows
  also from the proof of Lemma~\ref{lem:diff-and-int-isom} above. The
  second commutation is the defining property of~$A_1(\ader) =
  \bfk[\ader][x; \delta]$ as a skew polynomial ring.
\end{proof}

For introducing the \emph{integro-differential Weyl
  algebra}~$\intdiffweyl$ one needs a coefficient
domain~$\bfk[\der,\ader]$ that contains~$\bfk[\der]$ as well
as~$\bfk[\ader]$, subject to the natural requirement~$\der \ader =
1$. In other words, we set~$\bfk[\der,\ader] = \bfk\langle D, L
\rangle / (DL - 1)$ where~$\der$ and~$\ader$ are the residue classes
of~$D$ and~$L$, respectively. This ring has been studied extensively;
see for example~\mcite{Jacobson1950,Gerritzen2000}. The
derivation~$\delta$ on~$\bfk[\der,\ader]$ is determined uniquely as an
extension of the derivations on~$\bfk[\der]$
and~$\bfk[\ader]$. Defining now the integro-differential Weyl algebra
by~$\intdiffweyl := \bfk[\der,\ader][x; \delta]$, it is immediately
clear that~$\intdiffweyl$ contains~$\diffweyl$ and~$\intweyl$ as
subalgebras.

We refer again to~\mcite{RegensburgerRosenkranzMiddeke2009} for some
basic algebraic properties of~$\intdiffweyl$; for deeper and more
general results one may consult~\mcite{Bavula2013} and the references
therein. Let us only mention that~$\intdiffweyl$ is neither Noetherian
nor free of zero divisors. Writing~$\evl := 1 - \ader \der \in
\intdiffweyl$ for what we call again the \emph{evaluation}, we
have~$\intdiffweyl = \diffweyl \dotplus \intweyl \!\setminus\! \bfk[x]
\dotplus (\evl)$ as
\mbox{$\bfk$-modules}~\mcite[(18)]{RegensburgerRosenkranzMiddeke2009}. The
resemblance with the decomposition of Lemma~\ref{lem:drbo-decomp} is
no coincidence as can be seen in Corollary~\ref{cor:intdiff-isom}
below.

\begin{lemma}
  \label{lem:intdiffweyl-bases}
  For~$\intdiffweyl$ one may choose the $\bfk$-bases $\mathfrak{B}_i
  := \mathfrak{D} \cup \mathfrak{R} \cup \mathfrak{E}_i \; (i = 1,
  2, 3)$ containing the subbases~$\mathfrak{D} = \{ x^i \der^k \mid
  i,k \geq 0 \}$ and~$\mathfrak{R} = \{ x^i \ell^j \mid i \geq 0; \, j
  > 0 \}$ together with
  \begin{enumerate}
  \item $\mathfrak{E}_1 := \{ x^i \ader^j \evl \der^k \mid i,j,k \geq 0
    \}$,
  \item\label{it:bas2} $\mathfrak{E}_2 := \{ x^i \ader^j \der^k \mid i
    \geq 0; \, j,k > 0\}$,
  \item $\mathfrak{E}_3 := \{ x^i \ader x^j \der^k \mid i, j \geq 0; \,
    k > 0 \}$.
  \end{enumerate}
  Hence one has~$\mathfrak{B}_2 = \{ x^i \ader^j \der^k \mid i, j, k
  \geq 0 \}$ for the case~\ref{it:bas2}. Moreover, one may also use
  the subbasis~$\mathfrak{R}' := \{ x^i \ader x^j \mid i, j \geq 0 \}$
  in place of~$\mathfrak{R}$.
\end{lemma}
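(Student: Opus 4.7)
The plan is to establish $\mathfrak{B}_2$ as a $\bfk$-basis first, then pass to $\mathfrak{B}_1$ and $\mathfrak{B}_3$ via invertible changes of basis internal to $\intdiffweyl$. For $\mathfrak{B}_2 = \{x^i \ader^j \der^k\}$, I would use the skew polynomial structure $\intdiffweyl = \bfk[\der, \ader][x; \delta]$: the coefficient ring $\bfk[\der, \ader] = \bfk\langle D, L\rangle/(DL - 1)$ admits the $\bfk$-basis $\{\ader^j \der^k\}$ via the diamond lemma on the single rule $DL \to 1$ (no nontrivial self-overlap), so the skew polynomial construction gives $\{\ader^j \der^k \, x^i\}$ as a $\bfk$-basis of $\intdiffweyl$; iterating $x a = a x + \delta(a)$ then provides a triangular rewriting to the left normal form $\mathfrak{B}_2$.

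For $\mathfrak{B}_1$, the identity $\evl = 1 - \ader \der$ immediately gives
\begin{equation*}
x^i \ader^j \evl \der^k = x^i \ader^j \der^k - x^i \ader^{j+1} \der^{k+1},
\end{equation*}
while iterating $\ader \der = 1 - \evl$ for $j, k \geq 1$ with $m := \min(j, k)$ produces
\begin{equation*}
x^i \ader^j \der^k = x^i \ader^{j-m} \der^{k-m} - \sum_{l=1}^{m} x^i \ader^{j-l} \evl \der^{k-l},
\end{equation*}
whose leading term lies in $\mathfrak{D} \cup \mathfrak{R}$ and whose sum lies in $\mathfrak{E}_1$. This already shows $\mathfrak{B}_1$ spans $\intdiffweyl$. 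For linear independence I would expand any vanishing $\bfk$-combination of $\mathfrak{B}_1$ in the established basis $\mathfrak{B}_2$ and read off from the coefficient of $x^i \ader^J \der^K$ the recursion $c^{\mathfrak{E}_1}_{i, J, K} = c^{\mathfrak{E}_1}_{i, J-1, K-1}$ for $J, K \geq 1$; since the diagonal chain $(J+n, K+n)_{n \geq 0}$ must eventually leave the finite support, all $c^{\mathfrak{E}_1}$ coefficients vanish, and with them the $c^{\mathfrak{D}}$ and $c^{\mathfrak{R}}$ coefficients.

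For $\mathfrak{B}_3$ and the $\mathfrak{R}'$ variant, a short induction on $j$ starting from $\ader x = x \ader - \ader^2$ yields
\begin{equation*}
\ader x^j = \sum_{m=0}^{j} (-1)^m \frac{j!}{(j-m)!}\, x^{j-m} \ader^{m+1}.
\end{equation*}
Applying this to $x^i \ader x^j \der^k$ at fixed $k \geq 1$ and total degree $n = i+j$ yields a triangular transition onto the $\mathfrak{E}_2$ elements $\{x^{i'} \ader^{j'} \der^k : i'+j' = n+1,\; j' \geq 1\}$ with diagonal entries $(-1)^j j!$, hence invertible in characteristic zero; this substitutes $\mathfrak{E}_3$ for $\mathfrak{E}_2$ in the basis. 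The same computation at $k = 0$ exchanges $\mathfrak{R}$ for $\mathfrak{R}'$ in any $\mathfrak{B}_i$. The principal obstacle is the independence argument for $\mathfrak{B}_1$, whose recursion walks along infinite diagonals of the $(J,K)$-grid and must be closed off purely by finite-support reasoning; the triangular transitions underlying $\mathfrak{B}_3$ and $\mathfrak{R}'$ are comparatively routine but depend crucially on $\bfk$ having characteristic zero.
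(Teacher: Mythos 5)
Your proof is correct, and it reorganizes the argument in a way that differs genuinely from the paper's. The paper anchors everything on~$\mathfrak{B}_1$, which it imports from~\cite{RegensburgerRosenkranzMiddeke2009}, and then obtains~$\mathfrak{B}_2$ by a counting argument on the truncations~$\mathfrak{B}_i(n)$ (a generating set of a finitely generated free module having the same cardinality as a basis is itself a basis), backed up by the explicit mutually inverse transition maps~$\phi, \psi$; the passage to~$\mathfrak{B}_3$ and to~$\mathfrak{R}'$ is likewise by explicit inverse pairs read off from the cited commutation identities. You instead anchor on~$\mathfrak{B}_2$, deriving it from first principles out of the skew-polynomial presentation~$\intdiffweyl = \bfk[\der,\ader][x;\delta]$ together with the diamond lemma for~$\bfk\langle D, L\rangle/(DL-1)$ (the rule~$DL \to 1$ indeed has no self-overlap, so the irreducible words~$L^jD^k$ form a basis of the coefficient ring, and passing from the right normal forms~$\ader^j\der^k x^i$ to the left ones is unitriangular in the exhaustive $x$-filtration). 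You then certify each change of basis by triangularity rather than by exhibiting a two-sided inverse: the diagonal recursion~$c_{J,K} = c_{J-1,K-1}$ with finite support for the independence of~$\mathfrak{E}_1$ plays the role of the paper's verification that~$\phi \circ \psi = \psi \circ \phi = 1$, and the finite triangular matrices with diagonal entries~$(-1)^j j!$ on each graded piece~$(n,k)$ handle~$\mathfrak{E}_3$ and~$\mathfrak{R}'$. What your route buys is self-containedness---it does not presuppose the basis~$\mathfrak{B}_1$ from the literature---and a clean localization of where the hypothesis that~$\bfk$ be a $\QQ$-algebra actually enters (only the~$\mathfrak{E}_3$ and~$\mathfrak{R}'$ exchanges, whose diagonal entries must be units; the $\mathfrak{B}_1 \leftrightarrow \mathfrak{B}_2$ transition is unimodular). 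What the paper's route buys is closed-form transition formulas between all three bases, which are reused later (for instance in the explicit image of~$\mathfrak{E}_1$ under the embedding of Theorem~\ref{thm:gen-isom-weyl}). The underlying identities, $\evl = 1 - \ader\der$ and $\ader x^j = \sum_{m=0}^{j} (-1)^m \tfrac{j!}{(j-m)!}\, x^{j-m}\ader^{m+1}$, are the same in both treatments.
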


\begin{proof}
  The basis~$\mathfrak{B}_1$ has already been derived; see the
  observation
  before~\mcite[Lem.~19]{RegensburgerRosenkranzMiddeke2009}. Both~$\mathfrak{R}$
  and~$\mathfrak{R}'$ are known to be~$\bfk$-bases of~$\intweyl
  \!\setminus\! \bfk[x] \le \intdiffweyl$, called the right basis and
  the mid basis;
  see~\mcite[Cor.~12]{RegensburgerRosenkranzMiddeke2009} and the
  remark
  before~\mcite[Lem.~11]{RegensburgerRosenkranzMiddeke2009}. Hence the
  subbases~$\mathfrak{R}$ and~$\mathfrak{R}'$ are interchangeable.

  Let us next prove that~$\mathfrak{B}_2$ is also a basis. We
  write~$\mathfrak{D}(n) := \{ x^i \der^k \mid 0\leq i,k\leq n\}$ and
  $\mathfrak{R}(n) := \{ x^i \ader^j \mid 0\leq i\leq n, \, 0 < j\leq
  n \}$ for the truncations of~$\mathfrak{D}$
  and~$\mathfrak{R}$. Likewise we have $\mathfrak{E}_1(n) \!:=\! \{
  x^i \ader^j \evl \der^k \mid 0\leq i\leq n; \, 0\leq j,k < n \}$ and
  $\mathfrak{E}_2(n) \!:=\! \{ x^i \ader^j \der^k \mid 0\leq i\leq n;
  \, 0 < j,k\leq n\}$ for the truncated complements. Now set
  $\mathfrak{B}_i(n) := \mathfrak{D} \cup \mathfrak{R} \cup
  \mathfrak{E}_i$ for~$i=1$ and~$i=2$. Clearly we
  have~$|\mathfrak{B}_1(n)| = |\mathfrak{B}_2(n)|$ and
  \begin{equation*}
    \dirlim \mathfrak{B}_i(n) = \mathfrak{B}_i
  \end{equation*}
  for~$i=1$ and~$i=2$. Since
  \begin{equation*}
    \ader^j \evl\der^k = \ader^j(1 - \ader
    \der)\der^k = \ader^j\der^k - \ader^{j+1} \der^{k+1}
  \end{equation*}
  we see that~$\mathfrak{B}_2(n)$ generates~$\bfk
  \mathfrak{B}_1(n)$. But the latter has~$\mathfrak{B}_1(n)$ for a
  basis since it is a subset of the $\bfk$-basis~$\mathfrak{B}_1$
  of~$\intdiffweyl$. Since~$\mathfrak{B}_2(n)$ is thus a generating
  set of the same cardinality, we conclude that~$\mathfrak{B}_2(n)$ is
  also a $\bfk$-basis of~$\bfk \mathfrak{B}_1(n)$ and hence linearly
  independent, and so are those of~$\mathfrak{B}_2 = \smash{\dirlim}
  \mathfrak{B}_2(n)$. It follows that~$\mathfrak{B}_2$ is
  a~$\bfk$-basis of~$\intdiffweyl$.

  In fact, one can easily exhibit an explicit basis transformation
  between~$\mathfrak{B}_1$ and~$\mathfrak{B}_2$. We define $\psi\colon
  \bfk \mathfrak{B}_2 \to \bfk \mathfrak{B}_1$ by
  fixing~$\mathfrak{D}$ and~$\mathfrak{R}$ while setting
  \begin{equation*}
    \psi(x^i \ader^j \der^k) =
    \begin{cases}
      x^i(\ader^{j-k} - \sum_{m=1}^{k} \ader^{j-m} \evl \der^{k-m }) &
      \text{if $j \geq k$},\\[0.5ex]
      x^i( \der^{k-j} - \sum_{m=1}^j \ader^{j-m} \evl \der^{k-m} ) & \text{if $j < k$}.
    \end{cases}
  \end{equation*}
  Similarly, we define $\phi\colon \bfk \mathfrak{B}_1 \to \bfk
  \mathfrak{B}_2$ by fixing again~~$\mathfrak{D}$ and~$\mathfrak{R}$,
  and sending $x^i \ader^j \evl \der^k$ to $x^i( \ell^j \der^k -
  \ell^{j+1} \der^{k+1} )$. Let us now show that $\psi \circ \phi =1$
  and $\phi \circ \psi =1$. Obviously it suffices now to
  consider~$\mathfrak{E}_1$ and~$\mathfrak{E}_2$. For~$j\geq k$ one
  has
  \begin{align*}
    (\phi \circ \psi ) ( x^i & \ell^j \der^k) = \phi( x^i \ell^{j-k} -
    \sum_{m=1}^{k} x^i \ader^{j-m} \evl \der^{k-m })\\
    &=  x^i \ell^{j-k} -  \sum_{m=1}^{k} x^i ( \ell^{j-m} \der^{k-m} - \ell^{j-m+1} \der^{k-m+1})
    = x^i \ell^j \der^k;
  \end{align*}
  and for~$j< k$ again
  \begin{align*}
    (\phi \circ \psi ) ( x^i & \ader^j \der^k) = \phi( x^i \der^{k-j} -
    \sum_{m=1}^j x^i \ader^{j-m} \evl \der^{k-m} ) \\
    &= x^i \der^{k-j} - \sum_{m=1}^j x^i (\ell^{j-m} \der^{k-m} - \ell^{j-m+1} \der^{k-m+1})
    = x^i \ell^j \der^k.
  \end{align*}
  For the other direction, in case~$j\geq k$ one obtains
  \begin{align*}
    (\psi \circ \phi) (x^i & \ader^j \evl \der^k ) = \psi(  x^i \ell^j
    \der^k - x^i \ell^{j+1} \der^{k+1} )\\
    &=  x^i \Big( \sum_{m=0}^{k} \ader^{j-m} \evl \der^{k-m}  -
    \sum_{m=1}^{k} \ader^{j-m} \evl \der^{k-m} \Big)
    = x^i \ader^j \evl \der^k;
  \end{align*}
  and in case~$j< k$ likewise
  \begin{align*}
    (\psi \circ \phi) (x^i & \ader^j \evl \der^k ) = \psi(  x^i \ell^j \der^k - x^i \ell^{j+1} \der^{k+1}) \\
    &= x^i \Big( \sum_{m=0}^{j} \ader^{j-m} \evl \der^{k-m} -
    \sum_{m=1}^{j} \ader^{j-m} \evl \der^{k-m} \Big) = x^i \ader^j \evl \der^k.
  \end{align*}
  Since~$\mathfrak{B}_1$ is a $\bfk$-basis of~$A_1(\der, \ader)$ we
  have the isomorphism~$A_1(\der, \ell) \cong \bfk \mathfrak{B}_1$,
  which together with the isomorphism~$\phi\colon \bfk \mathfrak{B}_1
  \cong \bfk \mathfrak{B}_2$ yields~$A_1(\der, \ader) \cong \bfk
  \mathfrak{B}_2$, and this implies that $\mathfrak{B}_2$ is also a
  $\bfk$-basis of $A_1(\der, \ell)$ as already proved above.

  For proving that~$\mathfrak{B}_3$ is a $\bfk$-basis of~$A_1(\der,
  \ader)$, one proceeds similarly using the transition
  maps~$\phi\colon \bfk\mathfrak{B}_2 \to \bfk\mathfrak{B}_3$
  and~$\psi\colon \bfk\mathfrak{B}_3 \to \bfk\mathfrak{B}_2$ defined by
  \begin{align*}
    \phi(x^i \ader^j \der^k) &= \sum_{m=0}^{j-1} \frac{(-1)^m}{m! \,
      (j-m-1)!} \, x^{i+j-m-1} \ader x^m \der^k,\\
    \psi(x^i \ader x^j \der^k) &= \sum_{m=0}^j \frac{(-1)^{j-m} \, j!}{m!} \,
    x^{i+m} \ader^{j-m+1} \der^k
  \end{align*}
  in view of the
  identities~\mcite[(17)/(16)]{RegensburgerRosenkranzMiddeke2009}. Alternatively,
  one may use the two truncated bases~$\mathfrak{B}'_2(n) :=
  \mathfrak{D}(n) \cup \mathfrak{R}(n) \cup \mathfrak{E}'_2(n)$
  and~$\mathfrak{B}_3(n) := \mathfrak{D}(n) \cup \mathfrak{R}(n) \cup
  \mathfrak{E}_3(n)$ converging to~$\mathfrak{B}_2 = \dirlim
  \mathfrak{B}'_2(n)$ and~$\mathfrak{B}_3 = \dirlim
  \mathfrak{B}_3(n)$, where one defines
  \begin{align*}
    \mathfrak{E}'_2(n) &:= \{ x^i \ader^j \der^k \mid 0 \le i < n; \,
    0 < j,k \le n; \, i+j \le n \},\\
    \mathfrak{E}_3(n) &:= \{ x^i \ader x^j \der^k \mid 0 \le i,j < n; \,
    0 < k \le n; \, i+j<n \} .
  \end{align*}
  The rest of the argument is then as above, with~$\mathfrak{B}_2'$ in
  place of~$\mathfrak{B}_1$, and~$\mathfrak{B}_3$ in place
  of~$\mathfrak{B}_2$.
\end{proof}

The three bases correspond to direct decompositions~$\intdiffweyl =
\bfk\mathfrak{D} \dotplus \bfk\mathfrak{R} \dotplus \bfk\mathfrak{E}_i
\; (i = 1,2,3)$ with standard components~$\bfk\mathfrak{D} =
A_1(\der)$ and~$\bfk\mathfrak{R} = A_1(\ader) \!\setminus\! \bfk[x]$.
The extra component is either the evaluation ideal~$\bfk\mathfrak{E}_1
= (\evl)$, the left~$\bfk[x]$-submodule~$\bfk\mathfrak{E}_2$, or the
evaluation rung~$[\evl] = \bfk\mathfrak{E}_3$.

\begin{coro}
  \label{cor:intdiff-isom}
  We have~$\intdiffweyl \cong \bfk[x][\der,\vcum \Mid \DRBz]$
  as~$\bfk$-algebras.
\end{coro}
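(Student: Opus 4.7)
The plan is to construct an algebra homomorphism $\phi\colon \bfk[x][\der, \vcum \Mid \DRBz] \to \intdiffweyl$ sending generators to their evident counterparts, $x \mapsto x$, $\der \mapsto \der$, $\vcum \mapsto \ader$, and then to establish bijectivity by matching $\bfk$-module decompositions on both sides. To define $\phi$, view its source as a quotient of the free product $\bfk[x] * \bfk\langle \der, \vcum\rangle$. The candidate map clearly extends to this free product, so I only need to verify that the three families of DRB relations hold in $\intdiffweyl$. The section axiom $\der \vcum = 1$ corresponds to the defining relation $\der \ader = 1$ of $\bfk[\der, \ader]$. For the Leibniz and Rota-Baxter relations parametrized by $f \in \bfk[x]$, by the reduction carried out in the proof of Lemma~\ref{lem:diff-and-int-isom}, it suffices to verify them for $f = x$ and $f = 1$, respectively: $\der x = x \der + 1$ is the commutation $[x, \der] = -\delta(\der) = 1$ in the skew polynomial structure, and $\vcum^2 = x \vcum - \vcum x$ is $[x, \ader] = \delta(\ader) = \ader^2$.

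Next, I would compare $\bfk$-module decompositions. Lemma~\ref{lem:drbo-decomp} applied to $\galg = \bfk[x]$ gives
\begin{equation*}
  \bfk[x][\der, \vcum] \;=\; \bfk[x][\der] \dotplus \bfk[x][\vcum] \!\setminus\! \bfk[x] \dotplus [\evl],
\end{equation*}
while the basis $\mathfrak{B}_3$ from Lemma~\ref{lem:intdiffweyl-bases} yields $\intdiffweyl = \bfk\mathfrak{D} \dotplus \bfk\mathfrak{R} \dotplus \bfk\mathfrak{E}_3$. By Lemma~\ref{lem:diff-and-int-isom}, $\phi$ restricts to isomorphisms $\bfk[x][\der] \cong \diffweyl = \bfk\mathfrak{D}$ and $\bfk[x][\vcum] \!\setminus\! \bfk[x] \cong \intweyl \!\setminus\! \bfk[x] = \bfk\mathfrak{R}$ on the first two summands. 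For the evaluation rung, unwind its definition to see that $[\evl]$ is spanned over $\bfk$ by the elements $x^i \vcum x^j \der^k$ with $i,j \geq 0$ and $k > 0$, and under $\phi$ these map bijectively onto the set $\mathfrak{E}_3 = \{ x^i \ader x^j \der^k \mid i,j \geq 0, k > 0 \}$.

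Since $\mathfrak{E}_3$ is linearly independent in $\intdiffweyl$ (as part of the basis $\mathfrak{B}_3$), the spanning set of $[\evl]$ must already be linearly independent, and $\phi$ restricts to a $\bfk$-module isomorphism $[\evl] \cong \bfk\mathfrak{E}_3$. Combining the three summand isomorphisms, $\phi$ is a $\bfk$-module isomorphism, and since it respects multiplication by construction it is an isomorphism of $\bfk$-algebras, completing the proof.

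The main subtlety I expect lies in the treatment of $[\evl]$: \emph{a priori} we only know a spanning set for this rung, not a basis, so injectivity of $\phi$ on this component is not automatic. What makes the argument go through is the hard work invested in Lemma~\ref{lem:intdiffweyl-bases}, where the basis $\mathfrak{B}_3$ is constructed explicitly and shown to be linearly independent; this supplies exactly the linear-independence information needed to promote the surjection $[\evl] \twoheadrightarrow \bfk\mathfrak{E}_3$ to an isomorphism. All remaining verifications are routine transcriptions of Lemmas~\ref{lem:diff-and-int-isom} and~\ref{lem:drbo-decomp}.
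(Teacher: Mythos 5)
Your proposal is correct and follows essentially the same route as the paper: the paper's proof likewise combines the decomposition of Lemma~\ref{lem:drbo-decomp} with the isomorphisms of Lemma~\ref{lem:diff-and-int-isom} and identifies the spanning set $\{x^i\vcum x^j\der^k \mid i,j\geq 0,\ k>0\}$ of the evaluation rung $[\evl]$ with the basis $\mathfrak{E}_3$ of Lemma~\ref{lem:intdiffweyl-bases}. Your explicit verification that the candidate map kills the $\DRBz$-relators, and your remark that the linear independence of $\mathfrak{E}_3$ is what upgrades the spanning set of $[\evl]$ to a basis, merely make explicit what the paper leaves implicit.
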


\begin{proof}
  In view of the decomposition in Lemma~\ref{lem:drbo-decomp} and the
  isomorphisms of Lemma~\ref{lem:diff-and-int-isom}, this follows
  immediately from Lemma~\ref{lem:intdiffweyl-bases} since the
  evaluation rung~$[\evl] \leq \bfk[x][\der, \vcum]$ has the
  $\bfk$-basis~$\{ x^i \vcum x^j \der^k \mid i,j \geq 0, \, k > 0 \}$,
  which corresponds to~$\mathfrak{E}_3$.
\end{proof}

It is now easy to derive the following \emph{specialization
  isomorphism}~\mcite[Thm.~20]{RegensburgerRosenkranzMiddeke2009} from
the general quotient result on the differential Rota-Baxter operator
rings.

\begin{prop}
  \label{prop:spec-isom}
  We have~$A_1(\der, \ader) / (\evl x) \cong \bfk[x][\der, \cum \Mid
  \IDz]$ as $\bfk$-algebras.
\end{prop}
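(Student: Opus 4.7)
The plan is to deduce the specialization isomorphism from the general quotient relation of Proposition~\ref{prop:lin-op-rings} by specializing to polynomial coefficients, using Corollary~\ref{cor:intdiff-isom} as the bridge between $A_1(\der,\ader)$ and the differential Rota-Baxter operator ring.

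First I would apply Corollary~\ref{cor:intdiff-isom} to identify $A_1(\der,\ader) \cong \bfk[x][\der,\vcum \mid \DRBz]$ as $\bfk$-algebras, where the indeterminate $\ader$ of the skew polynomial presentation corresponds to the Rota-Baxter operator $\vcum$ on $\bfk[x]$, and in particular $\evl = 1 - \ader\der$ on the left-hand side corresponds to $\evl = 1 - \vcum\der$ on the right. Next, the quotient statement at the end of Proposition~\ref{prop:lin-op-rings}, specialized to $\galg = \bfk[x]$ and weight zero, gives
\[
\bfk[x][\der,\cum \mid \IDz] \;\cong\; \bfk[x][\der,\vcum \mid \DRBz] \,/\, I,
\]
where $I$ is the two-sided ideal generated by the relators $\evl f - \evl(f)\,\evl$ as $f$ ranges over $\bfk[x]$.

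The main step is then to show $I = (\evl x)$ as two-sided ideals. Taking $\vcum = \cum_0^x$ on $\bfk[x]$ as in Example~\ref{ex:pol}, the induced evaluation is $\evl(f) = f(0)$, so that $\evl(1) = 1$ while $\evl(x^k) = 0$ for all $k \ge 1$. Hence the relator at $f = 1$ vanishes identically, the relator at $f = x$ is precisely $\evl x$, and for $k \ge 1$ the relator at $f = x^k$ is $\evl x^k$. Because the polynomial ring $\bfk[x]$ sits inside the operator ring with its commutative multiplication, one has the factorisation $\evl x^k = (\evl x) \cdot x^{k-1}$ in the operator ring, which lies in the two-sided ideal generated by $\evl x$. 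By $\bfk$-linearity in $f$, the relator for any polynomial decomposes into such pieces and is thus contained in $(\evl x)$; conversely $\evl x$ itself is a relator, so $I = (\evl x)$.

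Chaining the three steps gives the desired isomorphism. The only genuinely delicate point is the factorisation $\evl x^k = (\evl x) \cdot x^{k-1}$ together with the observation that multiplication on the right by $x^{k-1} \in \bfk[x] \subset A_1(\der,\ader)$ keeps us inside the two-sided ideal generated by $\evl x$; everything else is a direct application of the results already in hand.
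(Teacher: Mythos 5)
Your proof is correct and takes essentially the same route as the paper: identify $A_1(\der,\ader)$ with $\bfk[x][\der,\vcum\Mid\DRBz]$ via Corollary~\ref{cor:intdiff-isom} and then apply the quotient statement at the end of Proposition~\ref{prop:lin-op-rings}. The only detail the paper leaves implicit is that the relator ideal $I$ generated by $\evl f - \evl(f)\,\evl$ coincides with $(\evl x)$, and your verification via $\evl(1)=1$, $\evl(x^k)=0$ and the factorisation $\evl x^k = (\evl x)\,x^{k-1}$ supplies exactly that.
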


\begin{proof}
  Using the isomorphism of Corollary~\ref{cor:intdiff-isom}, this
  follows from Proposition~\ref{prop:lin-op-rings}.
\end{proof}

Note that here we have used the standard Rota-Baxter
operator~$\cum\colon x^k \mapsto x^{k+1}/(k+1)$ for the
integro-differential Weyl algebra and the corresponding
integro-differential operator ring~$\bfk[x][\der, \cum]$. As can be
seen from~\mcite[Thm.~20]{RegensburgerRosenkranzMiddeke2009}, one can
also start from any other integro-differential structure~$(\der,
\cum)$ on~$\bfk[x]$ for obtaining a similar isomorphism except that
one factors out the ideal~$(\evl x - c \evl)$ where~$c := \evl(x) \in
\bfk$ is the \emph{integration constant} associated with the integral
operator~$\cum$.

The specialization isomorphism (Proposition~\ref{prop:spec-isom}) can
be interpreted as ``simulating'' integro-differential operators by
differential Rota-Baxter operators (in the important case of
polynomial coefficients). Since the structure of the latter is finer,
this is in principle not surprising. However, we can also derive a
corresponding \emph{generalization isomorphism} that identifies the
finer ring of differential Rota-Baxter operators as a subalgebra in an
overarching integro-differential operator ring. To this end, we take
our earlier result of the general theory (Theorem~\ref{thm:gen-isom})
and interpret it in the more concrete polynomial setting.

\begin{theorem}
  \label{thm:gen-isom-weyl}
  For~$\epsilon$ transcendental over~$\bfk$, endow~$\tilde{\bfk}[x] =
  \bfk[x,\epsilon]$ with derivation~$\der = \der/\der x$ and
  integral~$\cum = \cum_{\!\epsilon}^x$. Then there is a unique
  $\bfk$-algebra monomorphism
  \begin{equation*}
    \iota\colon \intdiffweyl \hookrightarrow \tilde{\bfk}[x][\der,
    \cum]
  \end{equation*}
  that sends~$\ader$ to~$\cum$ while fixing $x$ and~$\der$.
\end{theorem}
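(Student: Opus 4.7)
The plan is to derive Theorem~\ref{thm:gen-isom-weyl} as a concrete instantiation of the general embedding Theorem~\ref{thm:gen-isom}, specialized to the polynomial coefficient ring~$\galg=\bfk[x]$. We take~$\vcum:=\cum_0^x$ so that~$(\bfk[x],\der,\vcum)$ is the ordinary differential Rota-Baxter algebra of Example~\ref{ex:pol}, and by Corollary~\ref{cor:intdiff-isom} we identify~$\intdiffweyl \cong \bfk[x][\der,\vcum\Mid\DRBz]$ so that the source of~$\iota$ fits the source of~$\psi$ in Theorem~\ref{thm:gen-isom}.

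The key identification is between the free integro-differential algebra $\tilde{\galg}=\bfk[x]\otimes_K\bfk[x]$ (with $K=\ker\der=\bfk$) and the polynomial ring $\bfk[x,\epsilon]$. I would send $x\otimes 1\mapsto x$ and $1\otimes x\mapsto\epsilon$, obtaining a $\bfk$-algebra isomorphism $\tilde\galg\cong\bfk[x,\epsilon]$ precisely because $\epsilon$ is transcendental over $\bfk$. Under this identification, the extended derivation $\der(f\otimes f_\epsilon)=(\der f)\otimes f_\epsilon$ becomes $\partial/\partial x$ (fixing $\epsilon$), and a short computation using the formula $\cum(f\otimes 1)=(\vcum f)\otimes 1-1\otimes\vcum f$ from Proposition~\ref{prop:free-intdiffalg} shows that $\cum(x^k)=(x^{k+1}-\epsilon^{k+1})/(k+1)$, which is exactly~$\cum_{\!\epsilon}^x t^k\,dt$. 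Linearity in both variables extends this to the claim that $\cum=\cum_{\!\epsilon}^x$ on all of $\bfk[x,\epsilon]$ (since the second variable $\epsilon$ sits in $\ker\der$ and passes through~$\cum$ as a constant).

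With these identifications in place, the monomorphism $\psi\colon\galg[\der,\vcum]\hookrightarrow\tilde\galg[\der,\cum]$ from Theorem~\ref{thm:gen-isom} translates directly to a $\bfk$-algebra monomorphism $\iota\colon\intdiffweyl\hookrightarrow\tilde{\bfk}[x][\der,\cum]$ which, by inspection of~\eqref{eq:gen-isom}, fixes $x$ and $\der$ and sends $\ader$ to $\cum$. Uniqueness is then immediate: since $\intdiffweyl$ is generated as a $\bfk$-algebra by $x$, $\der$ and $\ader$, any $\bfk$-algebra homomorphism out of it is determined by the images of these three generators.

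The only nontrivial step is verifying that the identification $\tilde\galg\cong\bfk[x,\epsilon]$ is really what is asserted in the theorem statement, i.e. that the abstract tensor-product construction genuinely produces the ``generic point'' integral $\cum_{\!\epsilon}^x$; this is the main computation and it reduces to checking the formula on the monomial basis $x^k$ as sketched above. Everything else follows mechanically from Theorem~\ref{thm:gen-isom} and Corollary~\ref{cor:intdiff-isom}, so no additional combinatorial work on normal forms or relators is required.
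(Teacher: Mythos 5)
Your proposal is correct and follows essentially the same route as the paper: apply Theorem~\ref{thm:gen-isom} to $\galg=\bfk[x]$, identify the free integro-differential algebra $\bfk[x]\otimes_\bfk\bfk[x]$ with $\bfk[x,\epsilon]$ by checking on monomials that the induced derivation and integral are $\der/\der x$ and $\cum_{\!\epsilon}^x$, invoke Corollary~\ref{cor:intdiff-isom} to match the source, and get uniqueness from the fact that $x$, $\der$, $\ader$ generate $\intdiffweyl$. The only cosmetic difference is that you verify the integral formula on $x^k\otimes 1$ and extend by treating $\epsilon$ as a constant of integration, whereas the paper computes directly on the general monomial $x^i\otimes x^j$; both are immediate from the defining formula in Proposition~\ref{prop:free-intdiffalg}.
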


\begin{proof}
  Applying Theorem~\ref{thm:gen-isom} to~$\galg := \bfk[x]$ we observe
  that~$\tilde{\galg} = \bfk[x] \otimes_\bfk \bfk[x] \cong
  \bfk[x,\epsilon]$, defined by
  Proposition~\ref{prop:free-intdiffalg}, has the derivation and
  integral as described in the current theorem. Indeed, $\der(x^i
  \otimes x^j) = \der(x_i) \otimes x^j$ means~$\der(x^i \epsilon^j) =
  (\der/\der x) \, x^i \epsilon^j$ for the derivation while~$\cum (x^i
  \otimes x^j) = (\vcum x^i) \otimes x^j - 1 \otimes (x^j \vcum x^i)$,
  where $\vcum$ denotes the standard Rota-Baxter operator
  on~$\bfk[x]$, translates to
  \begin{equation*}
    \cum x^i \epsilon^j = \tfrac{x^{i+1}}{i+1} \, \epsilon^j -
    \epsilon^j \, \tfrac{\epsilon^{i+1}}{i+1} = \cum_{\!\epsilon}^x \, x^i
    \epsilon^j
  \end{equation*}
  for the integral. Hence~$\tilde{\galg}$ coincides
  with~$\tilde{\bfk}[x]$ as an integro-differential algebra.

  Let us now consider the map~$\iota\colon \intdiffweyl \to
  \tilde{\bfk}[x][\der, \cum]$. Since~$x, \der$ and~$\ader$
  generate~$\intdiffweyl$, the uniqueness claim follows. But we know
  from Corollary~\ref{cor:intdiff-isom} that~$\intdiffweyl \cong
  \galg[\der,\vcum]$, and with this identification the map~$\iota$ is
  clearly the same as the $\bfk$-algebra monomorphism given in
  Theorem~\ref{thm:gen-isom}.
\end{proof}

The intuitive idea behind the generalization isomorphism is that one
adjoins a \emph{generic initialization point}~$\epsilon$ for the
integral~$\cum$. The associated (multiplicative) evaluation~$\evl = 1
- \cum \der$ sends~$f(x,\epsilon) \in \smash{\tilde{\bfk}[x]}$
to~$f(\epsilon,\epsilon) \in \smash{\tilde{\bfk} :=
  \bfk[\epsilon]}$. This yields an isomorphic copy~$\smash{\iota
  \big(\intdiffweyl \big)}$ of the integro-differential Weyl algebra
in~$\smash{\tilde{\bfk}[x][\der,\cum]}$. However, one should observe
that~$\smash{\iota \big(\intdiffweyl \big)}$ by itself is only a
differential Rota-Baxter operator ring and \emph{not} an
integro-differential operator ring: The evaluation~$f(x,\epsilon)
\mapsto f(\epsilon, \epsilon)$ does not restrict to a map on its
coefficient domain~$\bfk[x]$.

One may also derive a $\bfk$-\emph{basis} of~$\smash{\iota
  \big(\intdiffweyl \big)} \le
\smash{\tilde{\bfk}[x][\der,\cum]}$. For any integro-differential
operator ring one has the relation~$\cum f \evl = (\cum f) \, \evl$;
see~\mcite[Table~1]{RosenkranzRegensburger2008a} and
Footnote~\ref{fn:no-eval-rules} in the proof of
Proposition~\ref{prop:lin-op-rings}. Setting~$f=1$ and iterating~$j$
times the integral~$\cum = \cum_\epsilon^x$ one obtains the
relation~$\cum \cdots \cum \evl = (x-\epsilon)^j/j! \, \evl$. Hence
$\iota$ maps the basis elements~$x^i \ader^j \evl \der^k \in
\mathfrak{E}_1$ of Lemma~\ref{lem:intdiffweyl-bases} to~$(1/j!)  \,
x^i(x - \epsilon)^j \, \evl \der^k$ while ``fixing'' those
of~$\mathfrak{D}$ and~$\mathfrak{R}'$.

\bigskip

\noindent {\bf Acknowledgments}: This work was supported by the National Natural Science Foundation
of China (Grant No.\@ 11371177 and 11371178), Fundamental Research Funds for the Central
Universities (Grant No.\@ lzujbky-2017-162), the National Science Foundation of US (Grant No.\@
DMS~1001855), the Engineering and Physical Sciences Research Council of UK (Grant No.\@
EP/I037474/1), and the Austrian Science Fund (FWF Grant No.\@ P30052).

\medskip

\noindent We thank the anonymous referee for valuable suggestions helping to improve the paper.

% \bibliographystyle{myplain}
% \bibliography{DiffRBOperators}

\end{document}